\newtheorem{theorem}{Theorem}[subsection]
\newtheorem{lemma}[theorem]{Lemma}
\newtheorem{corollary}[theorem]{Corollary}
\newtheorem{proposition}[theorem]{Proposition}
\newtheorem*{keywords}{Keywords}
\newtheorem{question}{Question}[section]
	\let\Item\item
\def\address#1{\expandafter\def\expandafter\@aabuffer\expandafter
	{\@aabuffer{\affiliationfont{#1}}\relax\par
	\vspace*{13pt}}}
\numberwithin{equation}{section}
\begin{document}

\date{}

\title{Irreducible representations of certain nilpotent groups of finite rank}

\author{ Anatolii V. Tushev 
\thanks{The author has received funding through the MSCA4Ukraine project, which is funded by the European Union (ID 1232926)} \\
        Justus Liebig University Giessen\\
        Giessen 35390, Germany;\\
        {\it E-mail: Anatolii.Tushev@math.uni-giessen.de}\\
       }

\maketitle

\begin{abstract}
In the paper we study irreducible representations of some nilpotent groups of finite abelian total rank. The main result of the paper states that if  a torsion-free minimax group $G$ of nilpotency class 2 admits a faithful irreducible representation $\varphi $ over a finitely generated field $k$ such that $char \, k \notin Sp(G)$ then there exist a subgroup $N$ and an irreducible primitive representation $\psi $ of the subgroup $N$ over $k$ such that the representation $\varphi $ is induced from $\psi $ and the quotient group $N/Ker\psi $ is finitely generated. 
 \begin{keywords}
abelian total rank, primitive representations
\end{keywords}
\end{abstract}

\section{Introduction} \label{Section 1}

          A group $G$ is said to have finite (Prufer) rank if there is a positive integer $m$ such that any finitely generated subgroup of $G$ may be generated by $m$ elements; the smallest $m$ with this property is the rank $r(G)$ of $G$. A group $G$ is said to be of finite torsion-free rank if it has a finite series each of whose factor is either infinite cyclic or locally finite; the number ${r_0}(G)$ of infinite cyclic factors in such a series is the torsion-free rank of $G$. \par 
If a group $G$ has a finite series each of whose factor is either cyclic or quasi-cyclic then $G$ is said to be minimax; the number $m(G)$ of infinite factors in such a series is the minimax length of $G$. If in such a series all infinite factors are cyclic then the group $G$ is said to be polycyclic; the number $h(G)$ of infinite factors in such a series is the Hirsch number of $G$.  \par 

Let $G$ be an abelian group and $ t(G) $ be the torsion subgroup of $ G $. Let $p \in \pi(t(G))$  and $G_p$ be the 
Sylow $ p $-subgroup of $t(G)$, where $\pi(t(G))$ is the set of prime divisors of orders of elements of $ t(G) $. 
Then we can define the total rank ${r_t}(G)$ of $G$ by the following formula: ${r_t}(G) = r(G/t(G)) + \sum\nolimits_{p \in \pi(t(G)) } r(G_p) $. \par
A soluble group has finite abelian total rank, or is a soluble FATR-group, if it has a finite series in which each factor is abelian of finite total rank. Many results on the construction of soluble FATR-groups can be found in \cite{LeRo2004}. \par 

Let $G$ be a group, let $R$ be a ring and let $I$ be a right ideal of the group ring $RG$. We say that a subgroup $H$ of the group $G$ controls the ideal $I$ if 

\begin{equation}
                                                             I = (I \cap RH)RG.                 \label{1.1}
\end{equation}

\par  
	Let $H$ be a subgroup of the group $G$ and let $U$ be a right $RH$-module. Since the group ring $RG$ can be considered as a left $RH$-module, we can define the tensor product $U{ \otimes _{RH}}RG$, which is a right $RG$-module named the $RG$-module induced from the $RH$-module $U$. Moreover, if $M$ is an $RG$-module and $U \le M$ then it follows from \cite[Chap. 2, Corollary 1.2(i)]{Karp} that

\begin{equation}
                                                 M = U{ \otimes _{RH}}RG                            \label{1.2}
\end{equation}

if and only if 

\begin{equation}
                                                  M = { \oplus _{t \in T}}Ut,                                     \label{1.3}
\end{equation}

where $T$ is a right transversal to subgroup $H$ in $G$. If the equality \ref{1.2} holds we say that the subgroup $H$ controls the module $M$. 
 \par 
	Suppose that $M = aRG$ is a cyclic right $RG$-module generated by a nonzero element $a \in M$. Put $I = An{n_{kG}}(a)$ and let $U = akH$, where $H$ is a subgroup of the group $G$. It is not difficult to note that, in these denotations, the equality \ref{1.1} holds if and only if the equality \ref{1.3} holds. Thus, in the case $M = aRG$, the equalities \ref{1.1}, \ref{1.2}, \ref{1.3} mean the same. 
\par  
The equality \ref{1.3} shows that properties of the $RG$-module $M$ and the $RH$-module $U$ are closely related. For instance , if the module $M$ has some  chain condition (for example, is Artenian or Noetherian) then the module U also has this condition. So, the equality \ref{1.2} may be very useful if properties of the $RH$-module $U$ are well studied. For instance, in the case where the group $H$ is polycyclic, we have a deeply developed theory (see \cite{Wehr09}). The equality \ref{1.2} also may be very useful in the case where the group $G$ has finite torsion-free rank ${r_0}(G)$ if ${r_0}(H) < {r_0}(G)$ because we can use the induction on ${r_0}(G)$ then. However, on the place of ${r_0}(G)$ there may be another rank of the group $G$ or the minimax length $m(G)$ if $G$ is minimax. 
\par    
If a $kG$-module $M$ of some representation  $\varphi $ of a group $G$ over a field $k$ is induced from some  $kH$-module $U$, where $H$ is a subgroup of the group $G$, then we say that the representation $\varphi $ is induced from a representation $\phi $ of subgroup $H$, where $U$ is the module of the representation $\phi $. Recall that the representation $\varphi $ is said to be faithful if $Ker\varphi  = 1$, it is equal to ${C_G}(M) = 1$. 
\par  

In Section 3 we managed to characterize faithful irreducible representations of an abelian group $G$ of finite total rank as representations induced from representations of finitely generated subgroups of $G$ (Corollary 3.1.5). The proof of this result is based on properties of the multiplicative group of fields (Proposition 3.1.1) and generalizations (Proposition 3.1.2) of some results of Kummer theory (see \cite{Lang1965}, Chap. VIII, §8).\par  

Let $R$ be a ring, $G$ be a group and $I$ be a right (left) ideal of the group ring $RG$. It is not difficult to show that ${I^\dag } = (1 + I) \cap G$ is a subgroup of $G$ and if the ideal $I$ is  two-sided then ${I^\dag }$ is a normal subgroup of $G$. The ideal $I$ is said to be faithful if ${I^\dag } = 1$. The approach based on Propositions 3.1.1 and 3.1.2 also allows us to study the properties of faithful prime ideals of group rings of abelian groups. In \cite{Sega2001} Segal proved that under some additional conditions  any faithful prime ideal of a group ring $RG$ of an abelian minimax group $G$ over a finitely generated commutative ring $R$ is finitely generated. In particular, we obtained a criterion when all faithful prime ideals of a group algebra $kG$ of an abelian group $G$ over a finitely generated field $k$ are finitely generated (Theorem 3.2.2). \par 
     Let $G$ be a group , let $k$ be a field and let $M$ be a $kG$ -module. The module $M$ is said to be primitive if it is not induced from any $kH$-submodule for any subgroup $H < G$.  The module $M$ is said to be semiprimitive if it is not induced from any $kH$-submodule for any subgroup $H < G$ such that $\left| {G:H} \right| < \infty $. A representation  $\varphi $ of $G$ over $k$ is said to be primitive (semiprimitive) if the module of the representation $\varphi $. \par
In Section 5 we consider the structure of nilpotent FATR-group of nilpotency class 2 which admits faithful semiprimitive irreducible representations over a finitely generated field $k$ such that $char\,k \notin Sp(G)$ (Theorem 5.1.5). The obtained result allows us to describe faithful irreducible representations of nilpotent minimax groups of nilpotency class 2 (see Theorem 5.2.2 and Corollary 5.2.3). These results raise the following question. \par  
 
\begin{question}\label{question 1.1}

Let $G$ be a torsion-free minimax nilpotent group and let $k$ be a finitely generated field such that $char\,k \notin Sp(G)$. Suppose that the group  $G$ admits a faithful irreducible representation $\varphi $ over $k$. Is it true that then there exist a subgroup $N$ of $G$ and an irreducible primitive representation $\psi $ of the subgroup $N$ over $k$ such that the representation $\varphi $ is induced from $\psi $ and the quotient group $N/Ker\psi $ is finitely generated ?

\end{question}

\section{Affirmative results} \label{Section 2}

\subsection{Group-theoretic lemmas} \label{Subsection 2.1}

         \begin{lemma}\label{Lemma 2.1.1} Let $G$ be an abelian group and let $T$ be the torsion subgroup of $G$. Let $H$ be a finitely generated subgroup of $G$. Then: \par
          (i)  if the set $\pi (T)$ is infinite then there is a nontrivial subgroup $X$ of $T$ such that $X \cap H = 1$; \par
           (ii) if the quotient group $G/T$ has infinite rank then there is a free abelian subgroup $Y \le G$ of infinite rank such that $Y \cap H = 1$.       
\end{lemma}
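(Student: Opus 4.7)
\emph{Proof plan.} For part (i) the idea is a pigeonhole on primes. Since $H$ is a finitely generated abelian group, the torsion subgroup $H\cap T$ is itself finitely generated and torsion, hence finite, so the set $\pi(H\cap T)$ is finite. As $\pi(T)$ is infinite, I can choose a prime $p\in\pi(T)\setminus\pi(H\cap T)$ and an element $x\in T$ of order $p$. The cyclic group $\langle x\rangle$ has prime order, so $\langle x\rangle\cap H$ is either trivial or all of $\langle x\rangle$; the second option would force $p$ to divide $|H\cap T|$, contradicting the choice of $p$. Hence $X:=\langle x\rangle$ works.

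For part (ii) I plan to work first modulo $T$ and then lift. The image $\bar H$ of $H$ in the torsion-free abelian group $G/T$ is finitely generated, hence of finite rank, so $\bar H\otimes_{\ZZ}\QQ$ is a finite-dimensional subspace of $V:=(G/T)\otimes_{\ZZ}\QQ$. Because $r(G/T)$ is infinite, a maximal $\ZZ$-independent family $\{\bar e_i\}_{i\in I}$ in $G/T$ generates a free abelian subgroup $\bar F$ of the same (infinite) rank, and in particular $\bar F\otimes_{\ZZ}\QQ=V$. Hence some finite subset $I_0\subset I$ suffices to span $\bar H\otimes\QQ$, and for $I_1:=I\setminus I_0$ the subgroup $\bar Y:=\langle\bar e_i:i\in I_1\rangle$ satisfies $\bar Y\cap\bar H=0$: the two $\QQ$-subspaces $\langle\bar e_i:i\in I_0\rangle_{\QQ}$ and $\langle\bar e_i:i\in I_1\rangle_{\QQ}$ of $V$ meet only at $0$.

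It then remains to lift $\bar Y$ back to $G$. For each $i\in I_1$ I would pick a preimage $e_i\in G$ of $\bar e_i$ and set $Y:=\langle e_i:i\in I_1\rangle$. Any relation $\sum n_i e_i=0$ in $G$ descends to a relation on the $\ZZ$-independent $\bar e_i$'s, forcing all $n_i=0$; so $Y$ is free abelian on $\{e_i\}_{i\in I_1}$ and in particular torsion-free. Moreover, for $y\in Y\cap H$ the image $\bar y$ lies in $\bar Y\cap\bar H=0$, so $y\in T$, and the torsion-freeness of $Y$ then gives $y=1$.

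I do not expect a serious obstacle. Both parts rest on the standard facts that a finitely generated torsion abelian group is finite and that a torsion-free abelian group of rank $r$ contains a free abelian subgroup of rank $r$. The only point needing a bit of care is ensuring that the lifted $Y$ is genuinely free rather than an extension of $\bar Y$ by a torsion group; the observation that $\ZZ$-independence of the $\bar e_i$'s modulo $T$ transfers upwards handles this cleanly.
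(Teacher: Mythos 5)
Your proof of part (i) is correct and essentially the paper's argument: the paper also observes that the torsion part of $H$ has finite $\pi$-set and then picks something in the complement (the paper takes the full product $\times_{p\in\pi(T)\setminus\pi(S)}T_p$ of the remaining primary components, you take a single cyclic subgroup of prime order; both are fine, yours is a bit more economical).

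Part (ii) is also correct, but your route is genuinely different from the paper's. The paper passes to the isolator $D=is_G(H)$, notes $r_0(D)<\infty$ so $r_0(G/D)=\infty$, lifts a free abelian subgroup $U/D$ of infinite rank, and then invokes Fuchs, Theorem~14.4 (a subgroup with free abelian quotient is a direct summand) to split $U=Y\oplus D$; since $H\le D$, $Y\cap H=1$ is immediate. You instead work in the $\QQ$-vector space $V=(G/T)\otimes\QQ$: you take a maximal $\ZZ$-independent family of $G/T$, throw away a finite subfamily that spans $\bar H\otimes\QQ$, and then lift the remaining generators and verify directly that the lifted set is $\ZZ$-independent (because any integral relation pushes down to one among $\ZZ$-independent elements) and that $Y\cap H=1$ (because $\bar Y\cap\bar H=0$ and $Y$ is torsion-free). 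The paper's argument is shorter once one has the Fuchs splitting theorem in hand and in fact gives the stronger conclusion $Y\cap D=1$; your argument is more elementary and self-contained, essentially re-proving the needed special case of the splitting by hand, at the cost of a slightly longer verification. Both are correct.
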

         \begin{proof} (i). Let $S$ be the torsion subgroup of $H$. By \cite[Theorem 15.5]{Fuch1973}, $H$ satisfies the condition of maximality for subgroups and it easily implies that the set $\pi (S)$ is finite. Then we can put $X = { \times _{p \in (\pi (T)\backslash \pi (S))}}{T_p}$, where ${T_p}$ is a $p$-component of $T$. \par
         (ii).  Since the torsion-free quotient group $G/T$ has infinite rank, the torsion-free rank ${r_0}(G)$ of $G$ is infinite. Put $D = i{s_G}(H)$, it follows from \cite[Theorem 15.5]{Fuch1973} that ${r_0}(H)$ is finite and hence, as the quotient group $D/H$ is torsion, we can conclude that ${r_0}(D)$ is finite. Therefore, ${r_0}(G/D)$ is infinite and hence there is a subgroup $U$ of $G$ such that the quotient group $U/D$ is free abelian of infinite rank. Then it follows from \cite[Theorem 14.4]{Fuch1973} that $U = Y \oplus D$ and hence $Y$ is a free abelian subgroup of infinite rank of $G$ such that $Y \cap H = 1$. 
\end{proof}

If $G$ is a group then the $FC$-center $\Delta (G) = \{ g \in G|\left| {G:{C_G}(g)} \right| < \infty \} $ of $G$ is a characteristic subgroup of $G$. \par
Let $H$ be a subgroup of a group $G$, the subgroup $H$ is said to be dense in $G$ if for any $g \in G$ there is an integer $ n \in \mathbb{N} $ such that ${g^n} \in H$. If ${g^n} \in G\backslash H$ for any  $ n \in \mathbb{N} $ and any $g \in G\backslash H$ then the subgroup $H$ is said to be isolated in $G$. If the group $G$ is locally nilpotent then the isolator  $is_{G}(H) = \{ g \in G|{g^n} \in H\, for\, some\, n \in \mathbb{N}\}$   of $H$ in $G$ is a subgroup of $G$ 
and if $H$  is a normal subgroup then so is $is_{G}(H)$. \par

         \begin{lemma}\label{Lemma 2.1.2} Let $G$ be a nilpotent group of nilpotency class 2 and $Z$ be the centre of $G$. Then: \par
         (i) $[x,yz] = [x,y][x,z]$ and $[xy,z] = [x,z][y,z]$ for all $x,y,z \in G$;\par
         (ii) the commutator map ${\varphi _a}:G \to Z$ given by ${\varphi _a}:x \mapsto [a,x]$ is a homomorphism for any $a \in G$; \par    
         (iii) if $Y$ is a subgroup of $G$ whose normalizer ${N_G}(Y)$ has finite index in $G$ then there are a positive integer $ n \in \mathbb{N} $  and a normal subgroup $\bar Y \le G$ such that ${Y^n} \le \bar Y \le Y$;\par
	(iv) $\Delta (G) \le i{s_G}(Z)$.      
\end{lemma}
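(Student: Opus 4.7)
The plan is to handle the four parts sequentially, since (i) supplies the bilinearity that fuels (ii), (iii) and (iv). For parts (i) and (ii) I would start from the universally valid identities
\[
[x,yz]=[x,z]\,[x,y]^z,\qquad [xy,z]=[x,z]^y\,[y,z],
\]
and then exploit nilpotency class $2$: every commutator lies in $Z$, so the superscripts act trivially and the identities collapse to bilinearity. Part (ii) is then immediate, since $\varphi_a(xy)=[a,xy]=[a,x][a,y]=\varphi_a(x)\varphi_a(y)$ and the image sits in $Z$.

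For part (iii), I would set $n=[G:N_G(Y)]$ and let $\bar Y=\bigcap_{g\in G}Y^g$ be the normal core of $Y$. The centre normalises every subgroup, so $Z\le N_G(Y)$; since $G/Z$ is abelian by class $2$, the quotient $N_G(Y)/Z$ has index $n$ in an abelian group, whence $g^n\in N_G(Y)$ for every $g\in G$. The family of conjugates of $Y$ is finite (indexed by $G/N_G(Y)$), so $\bar Y$ is a subgroup; it is contained in $Y$ (take $g=1$) and normal in $G$ because conjugation permutes the conjugates. To show $Y^n\le\bar Y$, I would compute, for arbitrary $y\in Y$ and $g\in G$,
\[
g\,y^n\,g^{-1}=\bigl(y\,[y,g^{-1}]\bigr)^n=y^n\,[y,g^{-1}]^n=y^n\,[y,g^{-n}],
\]
using centrality of $[y,g^{-1}]$ in the middle step and bilinearity from (i) at the end. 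Since $g^{-n}\in N_G(Y)$ normalises $Y$, we have $[y,g^{-n}]\in Y$, so $g\,y^n\,g^{-1}\in Y$, i.e.\ $y^n\in Y^g$ for every $g$. Thus $Y^n\le\bar Y\le Y$.

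Part (iv) runs on the same engine with $C_G(a)$ in place of $N_G(Y)$. For $a\in\Delta(G)$ set $n=[G:C_G(a)]$; again $Z\le C_G(a)$ and $G/Z$ is abelian, so $h^n\in C_G(a)$ for every $h\in G$, giving $[a,h^n]=1$. Bilinearity from (i) then yields $[a^n,h]=[a,h]^n=[a,h^n]=1$ for every $h$, so $a^n\in Z$ and $a\in is_G(Z)$. The whole lemma reduces to the single elementary fact that $g^{[G:H]}\in H$ whenever $Z\le H\le G$ (because $G/Z$ is abelian), combined with the bilinearity of (i); there is no serious obstacle, but the step that needs care is the identity $[y,g^{-1}]^n=[y,g^{-n}]$ in the display in (iii), which must be read inside $Z$ where the commutator actually lives.
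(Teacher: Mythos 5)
Your proof is correct and follows essentially the same backbone as the paper's, with the same commutator identities in (i) and the same key computation $[x,y]^n=[x^n,y]=[x,y^n]$ driving (iii) and (iv). The one genuine variation is in part (iii): the paper takes $\bar Y=\left\langle (Y^n)^x\mid x\in G\right\rangle$, the normal closure of $Y^n$, so that $Y^n\le\bar Y$ is automatic and the work is in showing $\bar Y\le Y$ (via $(Y^n)^x\le Y$); you take $\bar Y=\bigcap_{g\in G}Y^g$, the normal core of $Y$, so that $\bar Y\le Y$ is automatic and the work is in showing $Y^n\le\bar Y$. These are dual constructions, and the verification in both cases is the same identity $g\,y^n\,g^{-1}=y^n[y,g^{-n}]\in Y$, just read in opposite directions. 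A small added value of your write-up is that you make explicit the choice of $n$: since $Z\le N_G(Y)$ (resp.\ $Z\le C_G(a)$) and $G/Z$ is abelian, $N_G(Y)$ is normal and Lagrange in $G/N_G(Y)$ gives $g^{[G:N_G(Y)]}\in N_G(Y)$ for all $g$. The paper asserts the existence of such an $n$ without justification; your observation supplies it cleanly, and it is also precisely what is needed in (iv).
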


          \begin{proof} (i) The assertion follows from well-known commutator identities $[x,yz] = [x,z]{[x,y]^z}$ and $[xy,z] = {[x,z]^y}[y,z]$ because $G$ centralizes the commutant $[G,G]$. \par
           (ii)  The assertion follows from (i).  \par         
           (iii)  Since  $|G:{N_G}(Y)|<\infty$, there is $n \in \mathbb{N}$  such that ${x^n} \in {N_G}(Y)$ for all elements $x \in G$ and hence $[{x^n},y] \in Y$ for all elements $y \in Y$.  It follows from (i) that  $[{x^n},y] = {[x,y]^n} = [x,{y^n}]$ for all $x,y \in G$ and any $n \in \mathbb{N}$. Therefore, $[x,{y^n}] \in Y$ for all elements $x \in G$ and $y \in Y$. It implies that ${({y^n})^x} \in Y$ for all $x \in G$ and $y \in Y$. So, we see that ${({Y^n})^x} \le Y$ for all $x \in G$ and hence $\bar Y = \left\langle {{{({Y^n})}^x}|x \in G} \right\rangle  \le Y$. Evidently, $\bar Y$ is a normal subgroup of $G$ and ${Y^n} \le \bar Y \le Y$. \par     
           (iv) Let $x \in \Delta (G)$ then $\left| {G:{C_G}(x)} \right| < \infty $ and hence there is $n \in \mathbb{N}$ such that ${G^n} \le {C_G}(x)$, i.e. $[x,{g^n}] = 1$ for any $g \in G$. Then it follows from (i) that $1 = [x,{g^n}] = {[x,g]^n} = [{x^n},g]$ for any $g \in G$ and hence ${x^n} \in Z$. Therefore, $x \in i{s_G}(Z)$ and hence $\Delta (G) \le i{s_G}(Z)$. 
\end{proof}

Let $A$ and $H$ be subgroups of a group ${\rm{ G}}$. We say that $A$ is an $H$-invariant subgroup if $H \le {\rm{ }}{{\rm{N}}_{\rm{G}}}{\rm{ (A)}}$. 

\begin{lemma}\label{Lemma 2.1.3} Let $N$ be a nilpotent normal subgroup of finite rank of a group $G$ and let $H$ be a finitely generated dense $N$-invariant subgroup of $N$ whose derived subgroup $H'$ is $G$-invariant. Let $L = \bigcap\nolimits_{g \in G} {{H^g}} $ and suppose that $\left| {H:L} \right| = \infty $. Then there are a countable subset $\{g_i \in G\, |\, i \in \mathbb{N}\}$ and a descending chain $\{H_i \, |\, i \in \mathbb{N}\}$ of $H$-invariant subgroups ${H_i} \le H$ of finite index in $H$ such that the quotient group $H/(\bigcap_{i\in \mathbb{N}} H_i)$ is free abelian and the subgroups of $\{H_i \, |\, i \in \mathbb{N}\}$ have the following properties:\par
	(i) either there is a prime integer $p \in \pi (N/H)$ such that $H/{H_i}$ is a $p$-group for all  i $\in \mathbb{N}$ or for any prime integer $p$ there is  $n \in \mathbb{N}$ such that $p \notin \pi ({H_n}/{H_i})$ for all $i \ge n$;\par
(ii) $\bigcap\nolimits_{j = 1}^i {(H \cap {H^{{g_j}}})}  \subseteq {H_i}$ for any $i \in \mathbb{N}$;\par  
(iii) $\pi (H/{H_i}\,) \subseteq \pi (N/H\,)$ for any $i \in \mathbb{N}$. 
\end{lemma}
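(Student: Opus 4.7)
The plan is to build the $g_i$ and $H_i$ inductively, using the prime structure of $\pi(N/H)$ to enforce the dichotomy (i) and augmenting the intersections $K_i := \bigcap_{j \le i}(H \cap H^{g_j})$ by a torsion-controlling subgroup to make the final quotient free abelian.

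Two preliminary observations. First, for every $g \in G$ the conjugate $H^g$ lies in $N$ and is normalized by $N$ (since $N$ is normal in $G$ and $N \le N_G(H)$, so $g^{-1}Ng \le N \le N_G(H)$, whence $N \le N_G(H^g)$); therefore $HH^g$ is a subgroup of $N$ and $H/(H \cap H^g) \cong HH^g/H^g$ embeds in the torsion group $N/H^g$. As a finitely generated torsion subgroup of a nilpotent group of finite rank it is finite, so $H \cap H^g$ is $H$-invariant of finite index in $H$ with $\pi(H/(H \cap H^g)) \subseteq \pi(N/H^g) = \pi(N/H)$. Second, since $H'$ is $G$-invariant we have $H' \le L$, so in $\bar H := H/H'$ the subgroups $\bar M_g := (H \cap H^g)/H'$ are of finite index and $\bigcap_{g \in G}\bar M_g = \bar L := L/H'$ has infinite index in $\bar H$.

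For the main construction let $T$ be the finite torsion subgroup of $\bar H$ and $T^*$ its preimage in $H$ (so $T^* \supseteq H'$). The case split in (i) corresponds to the following dichotomy: either there is a prime $p \in \pi(N/H)$ such that, for every finite-index $H$-invariant subgroup $M \le H$ arising during the induction, there exists $g \in G$ with $|M : M \cap H^g|$ a nontrivial $p$-power (case (a)); or no such $p$ exists (case (b)). In case (a) I choose each $g_i$ giving a $p$-power drop, so each $H/H_i$ is a $p$-group. In case (b), a diagonal argument on the primes used allows me to choose $g_i$ at stage $i$ so that the primes used in earlier stages do not appear in $\pi(H_{i-1}/H_i)$. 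Simultaneously, enumerating the countable set $H \setminus T^*$, I arrange at stage $i$ that the $i$-th enumerated element has order exceeding $i$ in $H/H_i$; this is possible because $|H:L| = \infty$ leaves room to choose $g_i$ pushing the element out of the current intersection. Strict descent $K_i \subsetneq K_{i-1}$ is secured by the same hypothesis. I then define $H_i$ to be the smallest $H$-invariant finite-index subgroup of $H$ containing $T^* K_i$ and satisfying the prime constraint of the chosen case; properties (ii), (iii), and (i) are then immediate, and $H/\bigcap_i H_i$ is a quotient of the finitely generated free abelian group $H/T^* \cong \bar H/T$, hence finitely generated abelian. The unbounded-order condition built into the construction forces this quotient to be torsion-free, and therefore free abelian.

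The main obstacle is the inductive choice of $g_i$ itself: I must simultaneously guarantee strict descent of the $K_i$, the prime dichotomy in (i), and the unbounded-order condition for every element of $H \setminus T^*$. The existence of a suitable $g_i$ at each stage rests on $|H:L| = \infty$ and the finite rank of $N$, which together should ensure that the conjugates $H^g$ have enough variety to realize any finite obstruction to membership in $L$.
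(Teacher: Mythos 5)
Your overall framing of the problem is right—you correctly observe that each $H \cap H^g$ is an $H$-invariant subgroup of finite index with $\pi(H/(H\cap H^g)) \subseteq \pi(N/H)$ (so Remak gives (iii) for the intersections $K_i$), and that $H' \le L$ so the real work happens in the abelianisation of $H$. But the paper does not construct the chain $\{H_i\}$ by hand: it first chooses $g_i$ so that $\bigcap_i (H \cap H^{g_i}) = L$ (possible because $H/L$ is countable) and sets $L_i = \bigcap_{j\le i}(H\cap H^{g_j})$, and then invokes \cite[Proposition 2.3]{Tush2022} as a black box to produce a chain $H_i \ge L_i$ with the free-abelian-quotient and prime-dichotomy properties, after which (ii) and (iii) are immediate. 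Your proposal instead attempts to reprove that cited proposition from scratch, and that reconstruction has genuine gaps.

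Concretely: (1) Your dichotomy for (i) is stated circularly---the clause ``for every finite-index $H$-invariant subgroup $M$ arising during the induction'' is part of deciding which case you are in, yet which subgroups arise depends on the case you choose. It needs to be replaced by a condition quantified over all finite-index $H$-invariant subgroups containing $L$, or argued by a compactness/pigeonhole step. (2) The ``unbounded-order'' scheduling is too weak to force $H/\bigcap_i H_i$ torsion-free: imposing at stage $i$ only that the $i$-th enumerated element of $H\setminus T^*$ has order exceeding $i$ in $H/H_i$ leaves open that its order in $H/H_j$ for $j>i$ stabilises at some finite value; one needs to revisit each element infinitely often (a genuine diagonal schedule), and you also need to reconcile that requirement with the simultaneous demand that $T^*K_i \le H_i$ and that $H_i$ respects the prime constraint---it is not clear these three requirements can be met by a single $H_i$ at each stage. (3) The phrase ``the smallest $H$-invariant finite-index subgroup of $H$ containing $T^*K_i$ and satisfying the prime constraint'' is not well-defined in general (the intersection of all such subgroups may fail to have finite index), so the definition of $H_i$ is incomplete. (4) You yourself flag the core difficulty---the inductive choice of $g_i$ realising strict descent, the prime dichotomy and the order growth all at once---and leave it at ``should ensure''; that is precisely the nontrivial content of the external proposition the paper cites, and it is not established here.
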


       \begin{proof} (i) Since the quotient group $H/L$ is countable, there is a countable subset $\{g_i \in G\, |\, i \in \mathbb{N}\}$ such that $ \bigcap_{i\in \mathbb{N}} (H\cap H^{g_i})=L $. Then $ \{L_i\, | \, L_i = \bigcap_{j=1}^i (H\cap H^{g_i})\}$ is a descending chain of subgroups such that $ \bigcap_{i\in \mathbb{N}} L_i=L $. By \cite[Proposition 2.3]{Tush2022}, there is there is a descending chain  of $H$-invariant subgroups ${H_i} \le H$ of finite index in $H$ such that the quotient group  $ H/(\bigcap_{i \in \mathbb{N}}H_i)$ is free abelian, ${L_i} \le {H_i}$ for any $i \in \mathbb{N}$ and either there is a prime integer $p \in \pi (N/H)$ such that $H/{H_i}$ is a $p$-group for all $i \in \mathbb{N}$ or for any prime integer $p$  there is $n \in \mathbb{N}$ such that  $p \notin \pi ({H_n}/{H_i})$ for all $i \ge n$.\par
(ii) Since ${L_i} \le {H_i}$ for any $i \in \mathbb{N}$, the assertion follows from the definition of ${L_i}$. \par
(iii) Since $N/H \cong {(N/H)^g} = N/{H^g}$, we see that $\pi (N/H) = \pi (N/{H^g})$. Therefore, as ${H^g}H/{H^g} \cong H/({H^g} \cap H)$ and ${H^g}H/{H^g} \le N/{H^g}$, we can conclude that $\pi (H/({H^g} \cap H)) \subseteq \pi (N/H)$ for any $g \in G$. Then it follows from Remak's theorem that $\pi (H/\bigcap\nolimits_{g \in X} {({H^g} \cap H)} ) \subseteq \pi (N/H)$ for any finite subset $X \subseteq G$. Therefore, by the definition of ${L_i}$, we have $\pi (H/{L_i}) \subseteq \pi (N/H)$ and hence, as ${L_i} \le {H_i}$,  we can conclude that $\pi (H/{H_i}) \subseteq \pi (N/H)$ for any $i \in \mathbb{N}$. 
\end{proof}

	\begin{lemma}\label{Lemma 2.1.4} Let $H$ be a finitely generated nilpotent group and $\{L_i \, |\, i \in \mathbb{N}\}$  be normal subgroups of finite index in $H$ which contain the derived subgroup $H'$ of $H$. Let $p$ be a prime integer and let $\{S_i \, |\, i \in \mathbb{N}\}$  be subgroups of $H$ such that ${L_i} \le {S_i}$ and ${S_i}/{L_i}$ is the Sylow $p$-subgroup of $ {H}/{L_i} $ for each $ i \in \mathbb{N}\ $. Suppose that there is a descending chain  of normal subgroups ${H_i} \le H$ of finite index in $H$ such that the quotient group $H/(\bigcap_{i\in \mathbb{N}} H_i)$ is free abelian, $\bigcap\nolimits_{j = 1}^i {{L_j}}  \subseteq {H_i}$ for each $ i \in \mathbb{N}\ $  and there is $n \in \mathbb{N}$ such that $p \notin \pi ({H_n}/{H_i})$ for all $i \ge n$. If $D$ is a subgroup of $H$ such that $D \le {S_i}$ for each $i \in \mathbb{N}$ then $D\leq \bigcap_{i\in \mathbb{N}} H_i $.
\end{lemma}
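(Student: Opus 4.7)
My plan is to show that every $d\in D$ satisfies $d^{p^\ell}\in\bigcap_{i\in\mathbb{N}}H_i$ for a fixed exponent $\ell$, and then to exploit the fact that $H/\bigcap_{i}H_i$ is free abelian (hence torsion-free) to conclude $d\in\bigcap_i H_i$.

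First I would observe that, since $D\le S_i$ and $S_i/L_i$ is the Sylow $p$-subgroup of the abelian group $H/L_i$, the coset $dL_i$ is a $p$-element for every $i\in\mathbb{N}$. Fix $i$; by Remak's theorem the natural map
\[
H\big/\bigcap_{j=1}^{i}L_j\;\hookrightarrow\;\prod_{j=1}^{i}H/L_j
\]
is an embedding, so the image of $d$ in $H/\bigcap_{j=1}^{i}L_j$ is a $p$-element (this uses that $L_j\supseteq H'$, so each quotient is abelian and products/subgroups of $p$-groups are $p$-groups). Since $\bigcap_{j=1}^{i}L_j\subseteq H_i$ by hypothesis, the quotient map to $H/H_i$ sends $d$ to a $p$-element as well.

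The decisive step is to turn this into a \emph{uniform} bound on the $p$-order of $dH_i$. Pick $n\in\mathbb{N}$ such that $p\notin\pi(H_n/H_i)$ for all $i\ge n$, and let $p^\ell$ be the order of the $p$-element $dH_n\in H/H_n$. For any $i\ge n$, since $H/H_i$ is finite abelian and $|H_n/H_i|$ is coprime to $p$, the Sylow $p$-subgroup $P_i$ of $H/H_i$ meets $H_n/H_i$ trivially and therefore maps injectively into $H/H_n$. The $p$-element $dH_i$ lies in $P_i$ and projects to $dH_n$, so $dH_i$ has the same order $p^\ell$; equivalently, $d^{p^\ell}\in H_i$ for every $i\ge n$. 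For the finitely many $i<n$ the chain $H_1\supseteq H_2\supseteq\cdots$ gives $H_i\supseteq H_n$, so $d^{p^\ell}\in H_n\subseteq H_i$ there too.

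Combining these, $d^{p^\ell}\in\bigcap_{i\in\mathbb{N}}H_i$. The coset of $d$ in the free abelian group $H/\bigcap_i H_i$ is then a torsion element, hence trivial, which gives $d\in\bigcap_i H_i$ as required. The only real obstacle is step three above — producing the single exponent $\ell$ independent of $i$ — and this is precisely what the hypothesis $p\notin\pi(H_n/H_i)$ is designed to supply, via the standard splitting of a finite abelian group into its $p$-part and $p'$-part.
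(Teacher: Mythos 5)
Your proof is correct and uses the same key ingredients as the paper's: the Remak embedding to promote $p$-element behaviour from each $H/L_j$ to $H/H_i$, the coprimality hypothesis $p\notin\pi(H_n/H_i)$, and the torsion-freeness of $H/\bigcap_i H_i$. The paper packages this as a proof by contradiction (replacing $d$ by a power to land in $H_n$, then deriving a clash between $d^{p^m}\in H_i$ and $p\nmid|H_n/H_i|$), whereas you run it directly by extracting a uniform exponent $\ell$; the difference is purely presentational.
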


\begin{proof} Put $K = \bigcap_{i\in \mathbb{N}} H_i $ and suppose that there is an element $d \in D\backslash K$. Since the quotient group  $H/K$ is torsion-free and $\left| {H/{H_n}} \right| < \infty $, we can assume that $d \in {H_n}\backslash K$. Then, as $K = \bigcap_{i\in \mathbb{N}} H_i $, there is $i > n$ such that $d \in {H_n}\backslash {H_i}$ and hence, as $\bigcap\nolimits_{j = 1}^i {{L_j}}  \subseteq {H_i}$, we have $d \in {H_n}\backslash (\bigcap\nolimits_{j = 1}^i {{L_j}} )$. Since $D \le {S_i}$ for each $ i\in \mathbb{N} $, we see that $d \in \bigcap\nolimits_{j = 1}^i {{S_j}} $ and, as each quotient group ${S_i}/{L_i}$ is a $p$-group, it is not difficult to show that ${d^{{p^m}}} \in \bigcap\nolimits_{j = 1}^i {{L_j}} $ for some $ m \in \mathbb{N} $. Then, as $\bigcap\nolimits_{j = 1}^i {{L_j}}  \le {H_i}$, we have ${d^{{p^m}}} \in {H_i}$ but it is impossible, because $d \in {H_n}\backslash {H_i}$ and $p \notin \pi ({H_n}/{H_i})$. The obtained contradiction shoes that 
$D\leq K = \bigcap_{i\in \mathbb{N}} H_i $. 
\end{proof}

\subsection{Ring-theoretic lemmas}  \label{Subsection 2.2}

\begin{lemma}\label{Lemma 2.2.1} Let $G$ be an abelian group and let $H$ be a dense subgroup of $G$. Let $k$ be a field and $P$ be a prime ideal of $kG$. Then: \par
         (i) the group ring $kG$ is integer over $kH$;\par
         (ii) the ideal $P$ is maximal if and only if $P \cap kH$ is a maximal ideal of  $kH$; \par
         (iii) if ${P_1}$ is an ideal of $kG$ such that $P \le {P_1}$ and $P \cap kH = {P_1} \cap kH$  then  $P = {P_1}$     \end{lemma}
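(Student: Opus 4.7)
The plan rests on the fact that density of $H$ in $G$ translates directly into integrality of the ring extension $kH \subseteq kG$. For (i), each $g \in G$ satisfies $g^n \in H$ for some $n \in \mathbb{N}$, so $g$ is a root of $X^n - g^n \in kH[X]$ and is integral over $kH$. Since $G$ is a multiplicative $kH$-module generating set of $kG$ and the integral closure of $kH$ in $kG$ is a subring, every element of $kG$ is integral over $kH$.

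Once (i) is established, parts (ii) and (iii) follow from the standard machinery for integral extensions of commutative rings (both $kH$ and $kG$ are commutative since $G$ is abelian). For (ii), I pass to $kG/P$, which is a domain because $P$ is prime, and which is integral over the subring $kH/(P \cap kH)$ (integrality descends to quotients). Then I invoke the classical fact that if $R \subseteq S$ is an integral extension with $S$ an integral domain, then $S$ is a field if and only if $R$ is a field: the forward direction inverts $r \in R$ by clearing denominators in an integral relation for $r^{-1} \in S$, and the reverse direction is immediate. Applied to $\bar R = kH/(P\cap kH) \subseteq kG/P = \bar S$, this yields the equivalence in (ii).

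For (iii), I first rule out the trivial case $P_1 = kG$ (otherwise $P_1 \cap kH = kH \neq P \cap kH$), and then reduce modulo $P$: setting $\bar S = kG/P$ (a domain), $\bar R = kH/(P\cap kH)$, and $\bar I = P_1/P$, the hypothesis gives $\bar I \cap \bar R = 0$, and I want $\bar I = 0$. Suppose for contradiction that $0 \neq a \in \bar I$, and take an integral dependence $a^n + \bar r_{n-1} a^{n-1} + \cdots + \bar r_0 = 0$ of minimal degree. Minimality together with the fact that $\bar S$ is a domain (so one may cancel $a$) forces $\bar r_0 \neq 0$. Rearranging,
\[
\bar r_0 \;=\; -\,a\bigl(a^{n-1} + \bar r_{n-1} a^{n-2} + \cdots + \bar r_1\bigr) \;\in\; a\bar S \,\cap\, \bar R \;\subseteq\; \bar I \cap \bar R \;=\; 0,
\]
a contradiction. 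This gives $P_1 = P$.

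The real content lies entirely in part (i): once density is recognized as the exact hypothesis needed for integrality, parts (ii) and (iii) reduce to textbook arguments about integral extensions. The only obstacle worth flagging is the bookkeeping in (iii), namely the reduction mod $P$ and the explicit integral-dependence argument that extracts a nonzero element of $\bar I \cap \bar R$.
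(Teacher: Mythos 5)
Your proof is correct and follows essentially the same route as the paper: part (i) is identical (the polynomial $X^n - g^n$), and parts (ii) and (iii) are exactly the standard integral-extension facts the paper invokes by citing Bourbaki (Chap.~V, \S2), which you instead prove explicitly via the domain-and-minimal-degree argument. The only difference is that you supply the textbook proofs rather than references.
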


          \begin{proof} (i) Since $H$ is a dense subgroup of $G$, for any element $g \in A$ there is a positive integer  $ n\in \mathbb{N} $ such that ${g^n} = a \in kH$ and hence $g$ is a root of the polynomial ${X^n} - a$. Thus, each element of $G$ is integer over $kH$. It easily implies that the group ring $kG$ is integer over $kH$. \par
        (ii) The assertion follows from (i) and \cite[Chap. V, \S 2, Proposition 1]{Bour}. \par
        (iii) The assertion follows from (ii) and \cite[Chap. V, \S 2, Corollary 1]{Bour}.
\end{proof}

	\begin{lemma}\label{Lemma 2.2.2} Let $k$ be a field and $G$ be a group. Let $I$ be a right proper ideal of the group ring $kG$ such that $I = (I \cap kH)kG$, where $H$ is a subgroup of $G$. If there is a subgroup $X$ of $G$ such that $X \cap H = 1$ then $kX \cap I = 0$. If in addition the ideal is $I$ two-sided then the quotient ring $kG/I$ contains a subring isomorphic to $kX$.
\end{lemma}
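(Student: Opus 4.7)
The plan is to exploit the $kH$-module direct-sum decomposition of $kG$ afforded by a right transversal to $H$ in $G$. Since $X$ is a subgroup with $X \cap H = 1$, any two distinct $x_1, x_2 \in X$ satisfy $x_1 x_2^{-1} \notin H$, so the elements of $X$ lie in pairwise distinct right cosets $Hx$ of $H$. I would therefore begin by choosing a right transversal $T$ to $H$ in $G$ that contains $X$, so that $kG = \oplus_{t \in T} kH \cdot t$ as left $kH$-modules.

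The hypothesis $I = (I \cap kH)\,kG$ then refines into the direct decomposition $I = \oplus_{t \in T} (I \cap kH)\cdot t$, because the right-hand side is a subsum of the ambient direct sum $kG = \oplus_{t \in T} kH \cdot t$. Given an arbitrary $\alpha \in kX \cap I$, I would write it uniquely as $\alpha = \sum_{x \in X} \alpha_x x$ with $\alpha_x \in k$; comparing with the decomposition above, each scalar $\alpha_x$ (viewed as an element of $kH$) must belong to $I \cap kH$. The crux of the argument is that $I$ is a proper right ideal of $kG$, so $I \cap kH$ is a proper right ideal of $kH$ (it cannot contain $1$, or else $I$ would equal $kG$); in particular it contains no nonzero scalar from $k$, since any such scalar is a unit in $kH$. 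Therefore every $\alpha_x$ vanishes, $\alpha = 0$, and $kX \cap I = 0$.

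For the second assertion, once $I$ is two-sided the quotient $kG/I$ is an associative ring, and the composition $kX \hookrightarrow kG \twoheadrightarrow kG/I$ is a ring homomorphism whose kernel equals $kX \cap I$, which is $0$ by the first part; its image is then a subring of $kG/I$ isomorphic to $kX$.

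I do not anticipate a substantive obstacle here: the argument rests on the single observation that the hypothesis $I = (I \cap kH)\,kG$ is precisely what is needed to transport the $kH$-module direct-sum decomposition of $kG$ to one of $I$, after which the conditions $X \cap H = 1$ and the properness of $I$ do the remaining work almost automatically.
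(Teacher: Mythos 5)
Your proof is correct, and it is slightly cleaner than the paper's.

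The essential difference is your choice of transversal. You observe that since $X\cap H=1$, distinct elements of $X$ lie in distinct right cosets of $H$, and you exploit this by choosing a right transversal $T$ that actually \emph{contains} $X$. After that, for $\alpha=\sum_{x\in X}\alpha_x x\in kX\cap I$, each $kH$-component of $\alpha$ in the decomposition $I=\oplus_{t\in T}(I\cap kH)t$ is just the scalar $\alpha_x\cdot 1$, so properness of $I$ (hence of the right ideal $I\cap kH$ of $kH$) forces every $\alpha_x$ to vanish because a nonzero scalar would be a unit. The paper instead works with an arbitrary transversal, writes $a=\sum_i a_i t_i$ with $a_i\in I\cap kH$, uses properness to get $|Supp(a_1)|\ge 2$, and then derives a contradiction by noting that two distinct elements $h_1t_1,h_2t_1$ of $Supp(a)$ would have to lie in $X$, giving a nontrivial element $h_1h_2^{-1}\in X\cap H$. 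Both arguments hinge on the same two ingredients — the direct-sum decomposition coming from $I=(I\cap kH)kG$, and properness of $I$ — but your alignment of the transversal with $X$ collapses the combinatorial step in the paper's proof to a one-line observation about units, which is a genuine (if modest) simplification. The second assertion is handled identically in both.
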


\begin{proof} Suppose that there is a nonzero element $a \in kX \cap I$. Since $I = (I \cap kH)kG$, the element $a$ can be written in the form $a = \sum\nolimits_{i = 1}^n {{a_i}} {t_i}$, where all ${a_i} \in I \cap kH$, all ${t_i} \in T$ and $T$ is a right transversal for $H$ in $G$. Since the ideal $I$ is proper, we see that $|Supp({a_1})| \ge 2$ and hence the element ${a_1}$ can be presented in the form ${a_1} = \sum\nolimits_{i = 1}^m {{\alpha _i}} {h_i}$, where all ${\alpha _i} \in k$, all ${h_i} \in H$ and $m \ge 2$. Then, as $a \in kX \cap I$, we see that ${h_1}{t_1},{h_2}{t_1} \in X$ and hence ${h_1}{t_1}{({h_2}{t_1})^{ - 1}} = {h_1}{h_2}^{ - 1} \in X$ but $1 \ne {h_1}{h_2}^{ - 1} \in H$ and it contradicts $X \cap H = 1$. Thus, $kX \cap I = 0$. \par
Suppose that the ideal $I$ is two-sided then then $(kX + I)/I \cong kX/(kX \cap I) = kX$ 
\end{proof}

Let $A$ be a normal subgroup of a group ${\rm{ G}}$. If  $R$ is a ring then the action by conjugations of ${\rm{ G}}$ on $A$ can be extended to the action of $H$ on the group ring $RA$. We say that an ideal $I$ of $RA$ is $H$-invariant if ${I^g} = I$ for any element $g \in H$.  

	\begin{lemma}\label{Lemma 2.2.3} Let $D$ be a group, ${D_1}$ be a subgroup of $D$, 
	 $A$ be an abelian normal subgroup of D and $ A_1= A \cap D_1 $. Let $R$ be a ring, ${R_1}$ be a subring of $R$ and suppose that the group ring $RA$ has a proper $D$-invariant ideal $P$. Let ${P_1} = {R_1}{A_1} \cap P$ then ${P_1}{R_1}{D_1} = {R_1}{D_1} \cap PRD$.
\end{lemma}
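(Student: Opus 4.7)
The plan is to reduce the identity to the uniqueness of coefficients in the direct sum decomposition of a group ring over a normal subgroup, once compatible coset representatives are fixed.

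First I would observe that $D_1/A_1 = D_1/(D_1 \cap A)$ injects canonically into $D/A$ via the map $x A_1 \mapsto xA$, so any right transversal $T_1$ of $A_1$ in $D_1$ can be extended to a right transversal $T = T_1 \sqcup T_2$ of $A$ in $D$. This yields the $RA$-module direct sum $RD = \bigoplus_{t \in T} RA \cdot t$ and, analogously, $R_1 D_1 = \bigoplus_{t \in T_1} R_1 A_1 \cdot t$. Since $A$ is normal in $D$, one has $t \cdot RA = RA \cdot t$ for every $t \in T$, and together with $P$ being an ideal of $RA$ this yields $PRD = \bigoplus_{t \in T} P \cdot t$; the same reasoning gives $P_1 R_1 D_1 = \bigoplus_{t \in T_1} P_1 \cdot t$.

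The inclusion $P_1 R_1 D_1 \subseteq R_1 D_1 \cap PRD$ is then immediate, because $P_1 \subseteq R_1 A_1 \subseteq R_1 D_1$ and $P_1 \subseteq P \subseteq RA$, so $P_1 R_1 D_1$ lies simultaneously in $R_1 D_1$ and in $PRD$. For the reverse inclusion I would take $x \in R_1 D_1 \cap PRD$ and expand it twice: $x = \sum_{t \in T_1} a_t \cdot t$ with $a_t \in R_1 A_1$, and $x = \sum_{t \in T} p_t \cdot t$ with $p_t \in P$. Both expansions live inside the ambient direct sum $RD = \bigoplus_{t \in T} RA \cdot t$, so uniqueness of the coefficients forces $p_t = 0$ for every $t \in T_2$ and $a_t = p_t \in R_1 A_1 \cap P = P_1$ for each $t \in T_1$. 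Hence $x \in \bigoplus_{t \in T_1} P_1 \cdot t \subseteq P_1 R_1 D_1$, as required.

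The only real technical point is arranging for a right transversal of $A_1$ in $D_1$ to embed as part of a right transversal of $A$ in $D$; this rests precisely on the injection $D_1/A_1 \hookrightarrow D/A$, which is automatic from $A_1 = D_1 \cap A$. After that step, the argument is a mechanical matching of coefficients under the direct sum decomposition; in particular, neither the $D$-invariance of $P$ nor its properness seems to be needed for this identity, so I would expect those hypotheses to be relevant only to the context in which the lemma is applied.
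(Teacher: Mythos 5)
Your proof is correct. The only point that needs care is the extension of a right transversal $T_1$ of $A_1$ in $D_1$ to a right transversal $T=T_1\sqcup T_2$ of $A$ in $D$, and you correctly justify it via the injection $D_1/A_1\hookrightarrow D/A$ coming from $A_1=D_1\cap A$: the cosets $At$ with $t\in T_1$ are pairwise distinct, so $T_1$ can be completed. One should also verify that $P_1R_1A_1=P_1$ and $P\cdot RA=P$ so that $P_1R_1D_1=\bigoplus_{t\in T_1}P_1t$ and $PRD=\bigoplus_{t\in T}Pt$; these follow because $P$ is an ideal of the commutative ring $RA$ and $R_1A_1\subseteq RA$.

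Your route is genuinely cleaner than the paper's. The paper picks an arbitrary right transversal $T$ of $A$ in $D$, writes $a=\sum a_i t_i$ with $a_i\in P$, and then, noticing that the support of $a_it_i$ lies in $D_1\cap At_i$, manually replaces each $t_i$ by $s_{i1}t_i\in D_1$ and shows that the re-packaged coefficient $b_i=a_is_{i1}^{-1}$ lies in $P\cap R_1A_1=P_1$. You do exactly the same bookkeeping, but you pre-compute it once and for all by choosing $T$ compatible with $T_1$; after that, the conclusion is pure coefficient matching in the direct sum $RD=\bigoplus_{t\in T}RA\,t$, with no re-indexing. Both arguments hinge on the same algebraic fact, namely $A_1=D_1\cap A$; yours just front-loads the work into the choice of transversal.

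Your closing remark is also accurate: neither the $D$-invariance nor the properness of $P$ is used. The paper invokes properness to assert $k_i\ge 2$ in its expansion of $a_i$, but this step is both unnecessary (the argument goes through verbatim with $k_i=1$ by taking $r_{i1}=e$) and, as stated, not actually a consequence of properness. In the degenerate case $P=RA$ the identity reduces to $R_1D_1=R_1D_1$, so nothing is lost. Those hypotheses are evidently carried along because they hold in every application of the lemma in the paper, not because the lemma needs them.
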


	\begin{proof}  Evidently, ${P_1}{R_1}{D_1} \subseteq {R_1}{D_1} \cap PRD$. Let $a \in {R_1}{D_1} \cap PRD$ then the element $a$ can be presented in the form $a = \sum\nolimits_{i = 1}^n {{a_i}} {t_i}$, where ${a_i} \in P$, ${t_i} \in T$ and $T$ is a right transversal for $A$ in $D$. In its tern each element ${a_i}$ can be presented in the form ${a_i} = \sum\nolimits_{j = 1}^{{k_i}} {{\alpha _{ij}}} {s_{ij}}$, where ${\alpha _{ij}} \in R$, ${s_{ij}} \in A$ and, as the ideal $P$is proper, $2 \le {k_i}$ for all $i,j$. Since $a \in {R_1}{D_1}$, we see that ${\alpha _{ij}} \in {R_1}$ and ${s_{ij}}{t_i} \in {D_1}$ for all $i,j$. For each $i$ we have 
$({s_{ij}}{t_i}){({s_{i1}}{t_i})^{ - 1}} = {s_{ij}}{({s_{i1}})^{ - 1}} = {r_{ij}} \in {D_1} \cap A = {A_1}$, where $j \ne 1$. Put ${r_{i1}} = e$ then for each ${s_{ij}}$ we have ${s_{ij}} = {r_{ij}}{s_{i1}}$, where  ${r_{ij}} \in {A_1}$. Therefore, each ${a_i} = \sum\nolimits_{j = 1}^{{k_i}} {{\alpha _{ij}}} {r_{ij}}{s_{i1}} = (\sum\nolimits_{j = 1}^{{k_i}} {{\alpha _{ij}}} {r_{ij}}){s_{i1}}$ and, as ${a_i} \in P$, we see that ${b_i} = \sum\nolimits_{j = 1}^{{k_i}} {{\alpha _{ij}}} {r_{ij}} = {a_i}{({s_{i1}})^{ - 1}} \in P \cap {R_1}{A_1} = {P_1}$. Since ${a_i} = {b_i}{s_{i1}}$, we have $a = \sum\nolimits_{i = 1}^n {(\sum\nolimits_{j = 1}^{{k_i}} {{\alpha _{ij}}} {r_{ij}}({s_{i1}}{t_i}))} $, where ${r_{ij}}({s_{i1}}{t_i}) \in {D_1}$, and hence, as ${r_{ij}} \in {A_1}$, we have $({s_{i1}}{t_i}) \in {D_1}$. Then $a = \sum\nolimits_{i = 1}^n {{b_i}} ({s_{i1}}{t_i})$, where ${b_i} \in {P_1}$ and ${s_{i1}}{t_i} \in {D_1}$, and we can conclude that $a \in {P_1}{R_1}{D_1}$. Therefore, ${R_1}{D_1} \cap PRD \subseteq {P_1}{R_1}{D_1}$ and, as${P_1}{R_1}{D_1} \subseteq {R_1}{D_1} \cap PRD$, we see that    ${P_1}{R_1}{D_1} = {R_1}{D_1} \cap PRD$. 
\end{proof}

	Let $R$ be a ring and $M$ be an $R$-module. The module $M$ is said to be R-torsion if $an{n_R}(a) \ne 0$ for any element $a \in M$. If $an{n_R}(a) = 0$ for any nonzero element $a \in M$ then the module $M$ is said to be $R$-torsion-free. The module $M$ is said to be uniform if $U \cap V \ne 0$ for any nonzero submodules $U$ and $V$ of $M$. The ring $R$ is said to be uniform if the right module ${R_R}$ is uniform.

	\begin{lemma}\label{Lemma 2.2.4} Let $S$ be a skew group ring of a uniform domain $R \le S$ and an infinite cyclic group $\left\langle {{g^n}} \right\rangle $.  Let $J$ be a nonzero right ideal of $S$ then the quotient module $M = S/J$ has a finitely generated $R$-submodule $U$ such that the quotient module $M/U$ is $R$-torsion.
\end{lemma}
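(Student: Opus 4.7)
The plan is to pass to an Ore localization in which the image of $J$ is of small codimension, pull a finite generating set back to $S$, and conclude the torsion statement by a common-denominator argument.

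Since $R$ is a uniform domain, for any two nonzero $a,b\in R$ the right ideals $aR$ and $bR$ must intersect nontrivially, which is the right Ore condition; hence $R$ admits a classical right skew field of fractions $Q$. The automorphism $\sigma$ defining the skew group structure extends uniquely to $Q$. Writing $t$ for the generator of the infinite cyclic group, an iterated application of the right Ore condition in $R$ over the finite support of a given element of $S$ shows that $R\setminus\{0\}$ is a right Ore set in $S$; the corresponding right localization is canonically identified with the skew Laurent polynomial ring $T = Q[t^{\pm 1};\sigma]$, and one obtains an embedding $S \hookrightarrow T$.

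The heart of the argument is to show that $T/JT$ is a finite-dimensional right $Q$-vector space. Pick any nonzero $f \in J$; after right multiplication by a suitable power of $t$ (a unit of $T$) we may assume $f = r_0 + r_1 t + \cdots + r_m t^m$ with $r_0,r_m \ne 0$, both invertible in $Q$. The relations $f\equiv 0$ and $ft^{-1}\equiv 0$ in $T/fT$ then express $t^m$ and $t^{-1}$ respectively as right $Q$-linear combinations of $1,t,\dots,t^{m-1}$, and iterating in both directions shows that $T/fT$ is spanned as a right $Q$-module by $\overline 1,\overline t,\dots,\overline{t^{m-1}}$. Since $fT \subseteq JT$, the quotient $T/JT$ is also finite-dimensional over $Q$.

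Now fix a right $Q$-generating set $v_1,\dots,v_k$ of $T/JT$; by right Ore one may clear denominators so that each $v_i$ is the image of some $s_i \in S$. Put $U := \pi(s_1)R + \cdots + \pi(s_k)R \subseteq M$, where $\pi\colon S\to M = S/J$ is the canonical projection; then $U$ is a finitely generated right $R$-submodule of $M$. The natural $R$-linear map $\psi\colon M \to T/JT$, $\pi(s)\mapsto s+JT$, has $\ker\psi = \{\pi(s) : sr\in J\text{ for some }r\in R\setminus\{0\}\}$, which is precisely the $R$-torsion submodule of $M$. Given $m\in M$, write $\psi(m) = \sum v_i q_i$ in $T/JT$ and find a common right denominator $b\in R\setminus\{0\}$ with $q_i = a_i b^{-1}$, $a_i\in R$; then $\psi\!\left(mb - \sum\pi(s_i)a_i\right) = 0$, so this element is $R$-torsion and $m(br)\in U$ for some $r\in R\setminus\{0\}$. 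Since $R$ is a domain, $br\ne 0$, and $M/U$ is $R$-torsion.

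The main delicacy is the skew bookkeeping: checking that $R\setminus\{0\}$ is right Ore in $S$, and making sure that the finite generation of $T/fT$ is by the right $Q$-action (the side that governs the right $R$-module structure of interest) rather than the left. Once these are handled the argument is morally the division algorithm in a commutative Laurent polynomial ring over a field, modified to accommodate $\sigma$.
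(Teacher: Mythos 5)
Your proof is correct, but it takes a genuinely different route from the paper's. The paper works entirely inside $M=S/J$: it writes $S=\oplus_{i\in\mathbb{Z}}R\gamma^i$ (with $\gamma=g^n$), filters the ``polynomial part'' by $T_n=\oplus_{i=0}^n R\gamma^i$, observes that for $n\ge k$ (where $k$ is minimal with $T_k\cap J\neq 0$) the successive images $\tilde T_n/\tilde T_{n-1}$ are of the form $R/J_n$ with $J_n$ a nonzero, hence essential, right ideal of $R$, so that each step is $R$-torsion, and takes $U=\tilde T_{k-1}$; the negative powers of $\gamma$ are handled by a symmetric filtration. You instead pass to the right Ore localization $T=S(R\setminus\{0\})^{-1}\cong Q[t^{\pm1};\sigma]$, show by a division-algorithm argument that $T/JT$ is a finite-dimensional right $Q$-space, pull back a finite $Q$-generating set to $S$ to define $U$, and identify $\ker\bigl(M\to T/JT\bigr)$ with the $R$-torsion submodule to conclude. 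Both arguments ultimately rest on the same fact (uniform domain $\Leftrightarrow$ right Ore domain $\Rightarrow$ nonzero right ideals are essential); the paper's version is more elementary, stays inside $S$, and exhibits $U$ explicitly as generated by $\overline 1,\overline\gamma,\dots,\overline{\gamma^{k-1}}$, while yours buys a sharper structural statement ($T/JT$ finite-dimensional over the quotient division ring $Q$) at the cost of verifying that $R\setminus\{0\}$ is right Ore in all of $S$. That verification is the only place you gloss: it does hold (for $s=\sum a_i t^i$ and $0\neq r\in R$, choose $0\neq r'\in\bigcap_i\sigma^{-i}(c_i)R$ where $a_ic_i\in rR$, using uniformity for the finite intersection), but it deserves a sentence rather than a wave at ``iterated application of the right Ore condition.'' Also note that what you really use is that $T/fT$ is spanned by $\overline1,\dots,\overline{t^{m-1}}$ as a \emph{right} $Q$-module, which requires the small computation $r_it^i=t^i\sigma^{-i}(r_i)$ before dividing by $\sigma^{-m}(r_m)$ on the right; you flag this issue but it would be cleaner to carry it out.
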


	\begin{proof} By the definition of skew group rings, $ S=\oplus_{i\in \mathbb{Z}}Rg^i $ and ${R^{{g^i}}} = {g^{ - i}}R{g^i} = R$ for all integers $i$. Let $ T=\oplus_{i\in \mathbb{N}\cap \{0\}}Rg^i $, ${T_n} =  \oplus _{i = 0}^nR{g^i}$ and ${T_{ - 1}} = 0$ then ${T_n} = {T_{n - 1}} \oplus R{g^n}$, Put $\tilde T = T + J/J$ and ${\tilde T_n} = {T_n} + J/J$ then ${\tilde T_{n - 1}} \le {\tilde T_n}$ and $ \tilde T=\bigcup_{i\in \mathbb{N}\bigcup \{0\}}\tilde T_n $. Since $J \ne 0$, there is an integer $k$ which is minimal with respect to ${T_k} \cap J \ne 0$. Therefore, as $J$ is a right ideal of $S$, for any $n \ge k$ we have ${T_n} \cap J > {T_{n - 1}} \cap J$ and it easily implies that ${\tilde T_n}/{\tilde T_{n - 1}} \cong R/{J_n}$, where ${J_n}$ is a nonzero right ideal of $R$. Then, as the domain $R$ is uniform, each factor ${\tilde T_n}/{\tilde T_{n - 1}} \cong R/{J_n}$ is $R$-torsion for any $n \ge k - 1$. Put $U = {\tilde T_{k - 1}}$ then, as  $ \tilde T=\bigcup_{i\in \mathbb{N}\bigcup \{0\}}\tilde T_n $ and the domain $R$ has no zero divisors, we can conclude that the quotient module $\tilde T/U$ is $R$-torsion. \par
	Let ${M_n} = T \oplus ( \oplus _{i = 1}^nR{g^{ - i}})$, where , ${M_0} = T$ and ${\tilde M_n} = ({M_n} + J)/J$ then ${\tilde M_{n - 1}} \le {\tilde M_n}$, and $ M=\bigcup_{i\in \mathbb{N}\bigcup \{0\}}\tilde M_n $. So, the above arguments show that the quotient module $M/\tilde T$ is $R$-torsion. Then, as the domain $R$ has no zero divisors and the quotient module $\tilde T/U$ is $R$-torsion, we can conclude that the quotient module $M/U$ is $R$-torsion. 
\end{proof}

\section{Prime and maximal ideals in group rings of abelian groups}  \label{Section 3}
\subsection{Subfields generated by multiplicative subgroups of finite total rank} \label{Subsection 3.1}

	\begin{proposition}\label{Proposition 3.1.1} Let $F$ be an algebraic closed field and let $k$ be a finitely generated subfield of $F$. Let $K$ be a subfield of $F$ generated by $k$ and all roots of unity. Then the multiplicative group $K^*$ of the field $K$ may be presented in the form $K^* = T \times A$, where $T$ is a divisible torsion abelian group and $A$ is a free abelian group. 
\end{proposition}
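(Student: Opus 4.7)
The plan is to identify the torsion subgroup of $K^{*}$, split it off using injectivity, and then show the torsion-free quotient is free abelian via May's theorem together with Pontryagin's criterion. First, the torsion subgroup $T$ of $K^{*}$ coincides with $\mu(F)$, the group of all roots of unity of $F$: by hypothesis $K\supseteq\mu(F)$, and any root of unity in $K\subseteq F$ automatically lies in $\mu(F)$. Since $F$ is algebraically closed, for every $\zeta\in T$ and every $n\in\mathbb{N}$ the polynomial $X^{n}-\zeta$ has a root $\xi\in F$, and $\xi$ is again a root of unity, so $\xi\in T$. Hence $T$ is a divisible abelian torsion group, and therefore injective in the category of abelian groups. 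The short exact sequence $1\to T\to K^{*}\to K^{*}/T\to 1$ splits, giving $K^{*}\cong T\times A$ with $A:=K^{*}/T$ torsion-free.

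It remains to prove that $A$ is free abelian. Since $k$ is finitely generated over its prime field, $K$ is countable, and so $A$ is a countable torsion-free abelian group. By Pontryagin's criterion, $A$ is free iff every subgroup of finite rank is finitely generated. Writing $K=\bigcup_{N}K_{N}$ with $K_{N}:=k(\mu_{N})$ a finitely generated field, May's structure theorem on multiplicative groups of finitely generated fields gives $K_{N}^{*}\cong\mu(K_{N})\times F_{N}$ with $F_{N}$ free abelian; since $\mu(K_{N})=K_{N}^{*}\cap T$, the image $A_{N}$ of $K_{N}^{*}$ in $A$ is isomorphic to $F_{N}$, so $A=\bigcup_{N}A_{N}$ is an ascending union of free abelian subgroups. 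For a subgroup $B\le A$ of finite rank $r$, the intersections $B_{N}:=B\cap A_{N}$ are free of rank $\le r$; choosing $N_{0}$ large enough that a maximal independent subset of $B$ lies in $K_{N_{0}}^{*}$, the rank of $B_{N}$ equals $r$ for every $N\ge N_{0}$.

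To conclude that $B$ is finitely generated one must show this chain stabilizes, which reduces to the divisibility statement $\bigcap_{n\in\mathbb{N}}(K^{*})^{n}\subseteq T$. Here Kummer theory enters: if $x\in K^{*}\setminus T$ were $p^{k}$-divisible modulo $T$ for every $k$, then, using that $K=k(\mu_{\infty})$ contains all roots of unity of $p$-power order, the tower $k(x^{1/p^{k}})/k$ would be an abelian pro-$p$ extension sitting inside $K$; careful analysis exploiting the finite generation of $k$ then forces $x\in T$, a contradiction. The hypothesis that $k$ be finitely generated is essential, as otherwise $A$ can easily fail to be free (for instance, if $K=F$ then $A$ is a $\mathbb{Q}$-vector space).

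The main obstacle is precisely this last divisibility statement: controlling how iterated Kummer extensions inside $K$ interact with the arithmetic of the finitely generated base field $k$. Everything else---the divisibility of $T$, the splitting of the sequence, the reduction via Pontryagin, and the appeal to May---is comparatively routine bookkeeping.
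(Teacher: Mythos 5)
Your approach is genuinely different from the paper's, but it contains a gap that the paper's route avoids. The paper does not split $K^{*}$ by hand and then analyse the torsion-free quotient; instead it verifies that the extension $K/k$ is abelian (by restricting $Gal(K/k)$ to the group $T$ of all roots of unity, obtaining an embedding into $Aut(T)\cong\prod_{p\ne\mathrm{char}\,k}Aut(C_{p^{\infty}})$, an abelian group), and then invokes May's result on abelian extensions of finitely generated fields (the Corollary of Theorem~2 in \cite{May1972}), which yields the decomposition $K^{*}=T\times A$ with $A$ free in a single stroke. You instead appeal to May's structure theorem for finitely generated fields and try to propagate freeness up the tower $K_{N}=k(\mu_{N})$ via Pontryagin's criterion; that is a legitimate strategy in principle, but it requires controlling how the free parts $F_{N}$ fit together, which is precisely the content of the result the paper cites directly.

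The concrete gap is in your reduction. You assert that to show each finite-rank subgroup $B\le A$ is finitely generated (hence that Pontryagin's criterion applies), it suffices to prove the divisibility statement $\bigcap_{n}(K^{*})^{n}\subseteq T$, i.e.\ $\bigcap_{n}nA=0$. That reduction is false: a countable torsion-free abelian group with trivial first Ulm subgroup need not be free. For instance, the subgroup $B$ of $\mathbb{Q}$ consisting of all rationals with squarefree denominator has rank one and satisfies $\bigcap_{n}nB=0$, yet it is not finitely generated. The obstruction to $B$ being finitely generated is an infinite torsion quotient $B/B_{N_{0}}$, and your condition $\bigcap_{n}nA=0$ does not preclude this, because the ``roots'' witnessing that torsion can be distributed across the $A_{N}$ with the intermediate $B_{N_{0}}$-translates absorbing the divisibility. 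Beyond this, the divisibility statement itself is only gestured at (``careful analysis exploiting the finite generation of $k$''), so even the reduced claim is unproven. To repair the argument along your lines you would need to show, for example, that each $A_{N}$ is a pure (or direct) summand of $A_{N+1}$ so that the direct limit stays free; but that is essentially what May's abelian-extension theorem packages, and the paper's shorter route of verifying abelianity and citing that theorem is the cleaner path.
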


	\begin{proof} At first, we show that the Galois group $G = Gal(K/k)$ of the field extension $K/k$ is abelian. Let $T$ be the torsion subgroup of the multiplicative group $K^*$ of the field $K$. Then $T$ coincides with the set of all roots of unity and $K = k(T)$. Evidently, each automorphism $g \in G$ induces an automorphism ${g_T} \in Aut(T)$ of the group $T$ and the mapping $\varphi :g \mapsto {g_T}$ is a homomorphism of the group $G = Gal(K/k)$ into  $Aut(T)$. It is easy to note that $Ker\,\varphi  = {C_G}(T)$. Therefore, if $g \in Ker\,\varphi $ then $g$ centralizes all roots of unity from $T$ and hence, as $K = k(T)$ and $g$ centralizes the elements of the subfield $k$, we can conclude that $Ker\,\varphi  = 1$. Thus, $\varphi $ is a monomorphism of the group $G$ into $Aut(T)$. If follows from \cite[ Theorem 127.3]{Fuch1973} that $T = { \times _{p \ne char\,k}}{C_{{p^\infty }}}$, where $p$ runs all prime integers such that $p \ne char\,k$ and ${C_{{p^\infty }}}$ is a quasi-cyclic $p$-group. As ${C_{{p^\infty }}}$ is the $p$-component of the group $T = { \times _{p \ne char\,k}}{C_{{p^\infty }}}$, it follows from \cite[\S 113, Assertion g)]{Fuch1973} that $Aut(T) = { \times _{p \ne char\,k}}Aut({C_{{p^\infty }}})$. It is well known that $Aut({C_{{p^\infty }}})$ is isomorphic to the group of units of the ring ${J_p}$ of $p$-adic numbers (see \cite[\S 113, Example 3]{Fuch1973}). Thus, we can conclude that the group $Aut(T) = { \times _{p \ne char\,k}}Aut({C_{{p^\infty }}})$ is abelian and, as $\varphi $ is a monomorphism of the group $G$ into $Aut(T)$, we see that the group $G$ is abelian. Then it follows from \cite[Corollary of Theorem 2]{May1972} that $K^* = T \times A$. 
\end{proof}

Let $F$ be a field, $K$ be a subfield of $F$ and $X$ be a subset of $F$. Then $F(X)$ denotes 
a subfield of $F$ generated by $F$ and $X$ and $F[X]$ denotes a subring of $F$ generated by $F$ and $X$. The next proposition can be considered as a generalization of \cite[Chap. VIII, Theorem 13]{Lang1965} to the case of infinitely dimensional extensions.

	\begin{proposition}\label{Proposition 3.1.2}  Let $K$ be a subfield of a field $F$ and suppose that the field $K$ contains all roots of unity. Let $G$ be a subgroup of the multiplicative group $F^*$ of the field $F$ and let $H = K^* \cap G$. Suppose that the quotient group $G/H$ is torsion and $char\,K \notin \pi (G/H)$. Then $K(G) = K{ \otimes _{KH}}KG = { \oplus _{t \in T}}Kt$, where $T$ is a transversal to $H$ in  $G$. 
\end{proposition}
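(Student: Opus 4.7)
The plan is to reduce the proposition to a finite-dimensional Kummer computation and to show that for every finite subset $\{t_1,\dots,t_n\}$ of a transversal $T$ of $H$ in $G$, the elements $t_1,\dots,t_n$ are $K$-linearly independent in $F$. Once this is done, the $K$-span of $T$ inside $F$ decomposes as $\bigoplus_{t\in T} Kt$, and since every element of $G$ lies in some coset $tH$, the product $Kt$ is actually a $KH$-submodule; the natural surjection $K\otimes_{KH}KG\to K(G)$, $\alpha\otimes g\mapsto \alpha g$, is then an isomorphism because both sides are $K$-vector spaces with the same distinguished basis $T$.

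To prove the linear independence of a finite set $\{t_1,\dots,t_n\}\subseteq T$, I would consider the finitely generated subgroup $G_1=\langle t_1,\dots,t_n\rangle$ and put $H_1=G_1\cap H=G_1\cap K^*$. Since $G/H$ is torsion, $G_1/H_1$ embeds into $G/H$ as a finitely generated torsion (hence finite) subgroup, of some order $d$; by hypothesis $\operatorname{char} K\nmid d$, so $G_1^d\subseteq H_1\subseteq K^*$ and $K$ contains a primitive $d$-th root of unity. Thus $K(G_1)/K$ is a finite Kummer extension of exponent dividing $d$.

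The main step is to verify $[K(G_1):K]=|G_1/H_1|$. Setting $B=G_1^d(K^*)^d\subseteq K^*$, one checks that $K(G_1)$ equals the Kummer field $K_B$ attached to $B$: every generator $t_i$ is a $d$-th root of $t_i^d\in B$ (so $K(G_1)\subseteq K_B$), and conversely every $d$-th root of an element $g^d c^d\in B$ ($g\in G_1$, $c\in K^*$) has the form $\zeta gc$ with $\zeta$ a $d$-th root of unity, which lies in $K(G_1)$ because $K$ contains all roots of unity. Applying \cite[Chap.\ VIII, Theorem 13]{Lang1965} gives $[K(G_1):K]=[B:(K^*)^d]$. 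I then define $\varphi:G_1/H_1\to G_1^d(K^*)^d/(K^*)^d$ by $gH_1\mapsto g^d(K^*)^d$ (well-defined and surjective since $F^*$ is abelian), and prove it is injective: if $g^d=c^d$ with $c\in K^*$, then $g/c$ is a $d$-th root of unity and thus lies in $K^*$, so $g\in G_1\cap K^*=H_1$. This is the step where the hypotheses $\operatorname{char} K\notin\pi(G/H)$ and $K$ contains all roots of unity are both essential.

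With $[K(G_1):K]=|G_1/H_1|$ established, the set $G_1$ spans $K(G_1)$ over $K$ and collapses modulo $H_1\subseteq K^*$ to a spanning set of exactly $|G_1/H_1|$ elements, which by dimension count must be a $K$-basis. Since the cosets $t_1H,\dots,t_nH$ are distinct in $G/H$, their images $t_iH_1$ are distinct in $G_1/H_1$, so $t_1,\dots,t_n$ (coinciding up to $H_1$-multiples with $K$-independent coset representatives) are themselves $K$-linearly independent. This completes the reduction and hence the proof. The main obstacle I expect is the injectivity step of $\varphi$ together with the identification $K_B=K(G_1)$: both require the full strength of the hypothesis that $K$ contains all roots of unity, and they are what allow the finite-dimensional Kummer calculation to be assembled into the infinite-dimensional statement about $K\otimes_{KH}KG$.
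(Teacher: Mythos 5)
Your proof is correct and follows exactly the route the paper signals: the paper's own ``proof'' is a bare citation to \cite[Proposition~2.1.1]{Tush2006}, and the surrounding text describes the statement as a generalization of \cite[Chap.~VIII, Theorem~13]{Lang1965}, which is precisely the finite Kummer-theoretic result you invoke after reducing to the finitely generated subgroup $G_1=\langle t_1,\dots,t_n\rangle$. The key degree count $[K(G_1):K]=|G_1/H_1|$ via the isomorphism $G_1/H_1\cong G_1^d(K^*)^d/(K^*)^d$ (where the roots-of-unity hypothesis gives injectivity and the hypothesis on the characteristic makes Lang's theorem applicable) is sound, as is the final passage from linear independence of a transversal to the induced-module identity $K(G)=K\otimes_{KH}KG=\bigoplus_{t\in T}Kt$.
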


	\begin{proof} See \cite[Proposition 2.1.1]{Tush2006}. 
\end{proof}

If $G$ is a group then $Sp(G)$ denotes the set of prime integers $p$ such that the group $G$ has an infinite $p$-section. 

\begin{proposition}\label{Proposition 3.1.3} Let $F$ be an algebraically closed field and let $k$ be a finitely generated subfield of $F$. Let $G$ be a subgroup of finite total rank of the multiplicative group $F^*$ of  $F$ such that $char\,k \notin Sp(G)$. Then there is a subfield $K$ of $F$ such that $k \le K$, $N = K^* \cap G$ is a minimax subgroup of $G$ and $K(G) = K{ \otimes _{kN}}kG = { \oplus _{t \in T}}Kt$, where $T$ is a transversal for  $N$ in  $G$. 
\end{proposition}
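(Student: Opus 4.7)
The plan is to deduce Proposition 3.1.3 from Proposition 3.1.2 by a careful choice of the subfield $K$. I will arrange that $K$ contains all roots of unity of $F$, that $N := K^* \cap G$ is minimax, and that $G/N$ is torsion with $char\,k \notin \pi(G/N)$; once these are in place, Proposition 3.1.2 supplies $K(G) = K \otimes_{KN} KG = \oplus_{t \in T} Kt$, and the standard identification $KN \otimes_{kN} kG \cong KG$ converts this to a tensor over $kN$.

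First I would record the structure of $G$. Since $G \le F^*$, each Sylow $q$-subgroup of the torsion part $t(G)$ embeds in $C_{q^\infty}$, so the finite total rank of $G$ forces $\pi(t(G))$ to be finite and every such Sylow subgroup to be cyclic or quasi-cyclic; hence $t(G)$ is Chernikov, and the quotient $\bar G := G/t(G)$ is torsion-free abelian of finite rank $r_0 := r_0(G)$. Second, I would produce a minimax subgroup $N_0 \le G$ with $G/N_0$ torsion and $p$-torsion-free, where $p := char\,k$. Choose $\mathbb{Z}$-independent elements $\bar g_1,\dots,\bar g_{r_0} \in \bar G$ and put $\bar H = \langle \bar g_1,\dots,\bar g_{r_0}\rangle$; then $\bar G/\bar H$ is torsion of finite rank, its $p$-primary component is Chernikov, and the hypothesis $p \notin Sp(G)$ excludes a $C_{p^\infty}$-section of $G$, which forces this $p$-component to be finite. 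Taking $\bar H'$ to be the $p$-isolator of $\bar H$ in $\bar G$ produces a finitely generated overgroup of $\bar H$ with $\bar G/\bar H'$ torsion and $p$-torsion-free; its preimage $N_0 \le G$ is an extension of the Chernikov group $t(G)$ by a finitely generated abelian group and so is minimax, with $G/N_0 \cong \bar G/\bar H'$.

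Finally I would assemble $K$. Let $g_1,\dots,g_m \in G$ be lifts of a finite generating set for $\bar H'$, set $k_0 = k(g_1,\dots,g_m)$, and let $K$ be the subfield of $F$ generated by $k_0$ together with all roots of unity. Proposition 3.1.1 applied to $k_0 \le K$ yields a decomposition $K^* = T \times A$ with $T$ the group of roots of unity of $K$ and $A$ free abelian. Put $N = K^* \cap G$: the torsion of $N$ equals $G \cap T = t(G)$ (Chernikov), while the torsion-free part of $N$ embeds into $A$ and has rank at most $r_0$ as a subgroup of $\bar G$, so it is a finitely generated free abelian group; hence $N$ is minimax. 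Since $N \supseteq N_0$, the quotient $G/N$ is a quotient of $G/N_0$ and therefore torsion; moreover, lifting any element of $G/N$ to $g \in G$, its order in $G/N_0$ is coprime to $p$ and is divisible by its order in $G/N$, so $p \notin \pi(G/N)$. Proposition 3.1.2 now applies to complete the proof. I expect the main obstacle to be the middle paragraph, where the hypothesis $char\,k \notin Sp(G)$ must be translated into the finiteness of the $p$-primary component of $\bar G/\bar H$; this reduces to the structural fact that an infinite $p$-primary component of a finite-rank torsion abelian group must contain a copy of $C_{p^\infty}$ and hence produce a $C_{p^\infty}$-section of $G$, contradicting the spectral hypothesis.
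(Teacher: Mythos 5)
Your proposal is correct and takes essentially the same route as the paper: both construct $K$ by adjoining to $k$ a finitely generated piece of $G$ (enough to cover the torsion-free part up to $p'$-torsion) together with all roots of unity, invoke Proposition 3.1.1 for $K^* = T \times A$, deduce that $N = K^* \cap G$ is Chernikov-by-(finitely generated free abelian) and hence minimax, and finish with Proposition 3.1.2. Your write-up is simply more explicit about the step the paper states in one line (the existence of a finitely generated dense subgroup $H$ with $char\,k \notin \pi(G/H)$, via the $p$-isolator argument) and about checking the hypotheses of Proposition 3.1.2 ($G/N$ torsion, $p \notin \pi(G/N)$).
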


\begin{proof} Since $char\,k \notin Sp(G)$, we can choose a finitely generated dense subgroup $H$ of $G$ such that $char\,k \notin \pi (G/H)$. It is not difficult to note that the field $k(H)$ is finitely generated. Let $K$ be a subfield of $F$ generated by $k(H)$ and all roots of unity. Then, by Proposition 3.1.1, the multiplicative group $K^*$ of the field $K$ may be presented in the form $K^* = T \times A$, where $T$ is divisible torsion abelian group and $A$ is a free abelian group. Let ${T_1}$ be the torsion subgroup of the group $G$, as the group $G$ has finite total rank, the subgroup ${T_1}$ is Chernikov. Let $N = G \cap K^*$ then the quotient group $NT/T$ is a subgroup of the free abelian quotient group $K^*/T$ and hence the quotient group $NT/T$ is free abelian (see \cite[Theorem 14.5]{Fuch1973}). Therefore, the quotient group $N/(N \cap T) \cong NT/T$ if free abelian and, as the subgroup $N = G \cap K^*$ has finite total rank, we see that the quotient group $D/{T_2}$ is finitely generated free abelian, where ${T_2} = N \cap T$. Evidently ${T_2} \le {T_1}$ and hence the subgroup ${T_2}$ is Chernikov. Then, as the quotient group $N/{T_2}$ is finitely generated free abelian, we can conclude that the subgroup $N = G \cap K^*$ is minimax. By Proposition 3.1.2, $K(G) = K{ \otimes _{kN}}kG = { \oplus _{t \in T}}Kt$, where $T$ is a transversal to the subgroup $N$ in the group $G$. 
\end{proof}

\begin{theorem}\label{Theorem 3.1.4} Let $F$ be a field and let $k$ be a finitely generated subfield of $F$. Let $G$ be a subgroup of finite total rank of the multiplicative group $F^*$ of  $F$   such that $char\,k \notin Sp(G)$. Then there exist a finitely generated subgroup $H$ of $G$ such that $k(G) = k(H){ \otimes _{kH}}kG = { \oplus _{t \in T}}k(H)t$ and $k[G] = k[H]{ \otimes _{kH}}kG = { \oplus _{t \in T}}k[H]t$, where $T$ is a transversal for   $H$ in   $G$. 
\end{theorem}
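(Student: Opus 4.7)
The plan is to deduce Theorem~3.1.4 from Proposition~3.1.3 by descending the coefficient field from $K$ (which contains all roots of unity) to $k(H)$ for a suitable finitely generated subgroup $H \le G$. Embed $F$ in its algebraic closure $\overline{F}$ so that Proposition~3.1.3 applies, producing a subfield $K \subseteq \overline{F}$ with $k \le K$, a minimax subgroup $N = K^* \cap G$, and a transversal $T_N$ for $N$ in $G$ with $K(G) = \oplus_{t_N \in T_N} K t_N$.

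I then construct $H$ as a finitely generated, dense subgroup of $N$ (hence of $G$) satisfying $H = k(H)^* \cap N$ and containing a primitive $p$-th root of unity (and a primitive $4$-th root when $p = 2$) for each prime $p$ in the finite set $\pi$ of primes occurring in the divisible part $D$ of the torsion subgroup $t(N)$. Start with a finite set $H_0'$ consisting of lifts of a dense generating set for $N$ together with the required roots of unity (which lie in $N$ because the Pr\"ufer $p$-summand of $D$ does), set $H_0 = \langle H_0' \rangle$ and $H = k(H_0)^* \cap N$. That $H$ is finitely generated follows from the standard fact that for any finitely generated field $L$ the quotient $L^* / t(L^*)$ is free abelian with $t(L^*)$ finite: consequently the finite-rank subgroup $H/t(H) \le L^*/t(L^*)$ is finitely generated free abelian, and $t(H) \le t(L^*)$ is finite. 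Since $H \supseteq H_0$ and $H \subseteq k(H_0)^*$, we have $k(H) = k(H_0)$, whence $H$ is stable under saturation; also $\pi(N/H) \subseteq \pi$ because $H \supseteq H_0$, and $char\,k \notin \pi$ follows from the hypothesis $char\,k \notin Sp(G)$ since $\pi \subseteq Sp(G)$.

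Writing $T = S \cdot T_N$ with $S$ a transversal for $H$ in $N$ gives a transversal for $H$ in $G$. Any relation $\sum_{s, t_N} \alpha_{s, t_N} s t_N = 0$ with $\alpha_{s, t_N} \in k(H)$ groups as $\sum_{t_N} \bigl(\sum_s \alpha_{s, t_N} s\bigr) t_N = 0$ with inner sums in $K$; the $K$-linear independence of $T_N$ from Proposition~3.1.3 forces each inner sum to vanish. So it suffices to show $S$ is $k(H)$-linearly independent in $K$, equivalently $[k(N'):k(H)] = |N'/H|$ for every finitely generated $N'$ with $H \le N' \le N$. The crucial structural point is that each $p$-primary component of $N/H$ is cyclic (quasi-cyclic): the torsion of $N \le K^*$ lies in $\bigoplus_p \mu_{p^\infty}(K)$ and thus has at most one Pr\"ufer $p$-summand per prime, a property inherited by $N/H$. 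Hence $N'/H$ is a direct product of cyclic groups of coprime prime-power orders; the corresponding cyclic extensions $k(H)(g_p)/k(H)$ have coprime degrees and are linearly disjoint, reducing the problem to the cyclic prime-power case. For $g \in N$ of order $p^e$ modulo $H$, Capelli's irreducibility criterion applies since $\zeta_p \in k(H)$ (and $\zeta_4 \in k(H)$ for $p = 2$): the assumption $g^{p^e} = c^p$ with $c \in k(H)^*$ together with $\zeta_p \in k(H)$ forces $g^{p^{e-1}} = \zeta c \in k(H)^* \cap N = H$, contradicting the order of $g$; hence $X^{p^e} - g^{p^e}$ is irreducible and $[k(H)(g):k(H)] = p^e$.

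Once the $k(H)$-linear independence of $T$ is in hand, density of $H$ in $G$ gives $k(G) = \sum_{t \in T} k(H) t$: each $g \in G$ is algebraic over $k(H)$ as a root of $X^n - h$ with $h \in H$, and the span is closed under multiplication. Hence $k(G) = \oplus_{t \in T} k(H) t = k(H) \otimes_{kH} kG$, and restricting to $k[H]$-coefficients yields $k[G] = \oplus_{t \in T} k[H] t = k[H] \otimes_{kH} kG$. The main obstacle I anticipate is arranging the saturation condition $H = k(H)^* \cap N$ compatibly with finite generation and the required roots of unity — which hinges on structural properties of the multiplicative group of a finitely generated field — together with verifying the cyclic structure of the $p$-primary components of $N/H$ that enables the clean Capelli reduction.
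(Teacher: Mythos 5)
Your proof follows the paper through the application of Proposition~3.1.3 (producing $K$, the minimax $N = K^*\cap G$, and the $K$-decomposition indexed by a transversal $T_N$), but from there it genuinely diverges. The paper finishes by passing to a finitely generated domain $d\le k$ and invoking Segal's structure theorem \cite[Theorem~1.1]{Sega2001} to get $d[N]=d[H]\otimes_{dH}dN=\oplus_{t}d[H]t$ for a suitable finitely generated dense $H\le N$, then transfers this to $k(N)$ by clearing denominators and combines with the $K$-decomposition via transitivity of induction. You replace Segal's black box with a self-contained Kummer-theoretic argument: you build a \emph{saturated} finitely generated dense $H$ with $H=k(H)^*\cap N$ by taking $H=k(H_0)^*\cap N$ and appealing to the structure of $L^*/t(L^*)$ for finitely generated $L$, then reduce directness over $k(H)$ to the computation $[k(N'):k(H)]=|N'/H|$ for finitely generated $H\le N'\le N$, handled by the locally-cyclic structure of the $p$-primary components of $N/H$ plus Capelli's irreducibility criterion (made clean because $\zeta_p$, and $\zeta_4$ when needed, lie in $k(H)$, and the saturation $k(H)^*\cap N=H$ converts $g^{p^{e-1}}\in k(H)^*$ into $g^{p^{e-1}}\in H$). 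This trades a strong external theorem for a longer but more elementary and transparent argument, and is essentially a hands-on re-derivation of the special case of Segal's result that is actually needed.

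One point should be tightened. You put into $H_0'$ the roots of unity $\zeta_p$ (and $\zeta_4$) only for primes $p$ in the set $\pi$ indexing the Pr\"ufer summands of the divisible part $D$ of $t(N)$, and then assert $\pi(N/H)\subseteq\pi$. But $t(N)$ may have nontrivial \emph{finite} cyclic $p$-components for primes $p\notin\pi$, and nothing in your construction forces those components into $H$; if such a $p$-component survives in $N/H$ your Capelli step needs $\zeta_p\in k(H)$, which you have not arranged. The fix is easy and does not disturb anything else: also adjoin to $H_0'$ generators of the finite $p$-components $t(N)_p$ for $p\in\pi(t(N))\setminus\pi$ (a finite set of finite cyclic groups), or equivalently the finite part of $t(N)$. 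Then $t(N)_p\le H$ for these $p$, so $(N/H)_p=1$, $\pi(N/H)\subseteq\pi$, and your argument goes through. Likewise, to have $N/H\cong t(N)/(t(N)\cap H)$ one should make sure $H_0$ contains lifts of a free basis of the finitely generated free abelian quotient $N/t(N)$; you implicitly need $N=t(N)\cdot H$ for the locally-cyclic reduction, and ``lifts of a dense generating set'' is not quite strong enough without this explicit choice.
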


\begin{proof} Evidently, there is no harm in assuming that the field $F$ is algebraically closed.  Then, by Proposition 3.1.3, there is a subfield $K$ of $F$ such that $k \le K$, $N = K^* \cap G$ is a minimax subgroup of $G$ and $K(G) = K{ \otimes _{kN}}kG = { \oplus _{t \in T_N}}Kt$, where $T_N$ is a transversal for  $N$ in $G$. Since $k(N) \le K$, it easily implies that $k(G) = k(N){ \otimes _{kN}}kG = { \oplus _{t \in T_N}}k(N)t$. \par 

Since $k$ is a finitely generated field, there is a finitely generated commutative domain $d \le k$ such that $k$ is the field of fractions of $d$. Then, by \cite[Theorem 1.1]{Sega2001}, there exists a finitely generated dense subgroup $H$ of $N$ such that $d[N] = d[H]{ \otimes _{dH}}dN = { \oplus _{t \in T_H}}d[N]t$, where $T_H$ is a transversal  for $H$ in  $N$. Evidently, $k(N)$ is the field of fractions of $d[N]$. Since the subgroup $H$ is dense in $N$, we see that all elements of $N$ are algebraic over $k(H)$ and hence $k(N) = \sum\nolimits_{t \in T_H} {k(H)t} $ because $k(N)$ is an algebraic extension of $k(H)$ and $\sum\nolimits_{t \in T_H} {k(H)t} $ is a subring of $k(N)$. Thus, it is sufficient to show that the sum $k(N) = \sum\nolimits_{t \in T_H} {k(H)t} $ is direct. Suppose that the sum $k(N) = \sum\nolimits_{t \in T_H} {k(H)t} $ is not direct. Then there are elements ${t_i} \in T_H$ and $0 \ne {a_i} \in k(H)$, where $1 \le i \le n$, such that $\sum\nolimits_{i = 1}^n {{a_i}{t_i}}  = 0$. As $k(H)$ is the field of fractions of $d[H]$, there is an element $b \in d[H]$ such that $0 \ne {a_i}b \in d[H]$ for all $1 \le i \le n$. Therefore, $\sum\nolimits_{i = 1}^n {{a_i}b{t_i}}  = 0$, where $0 \ne {a_i}b \in d[H]$ for all $1 \le i \le n$, but this contradicts $d[N] = { \oplus _{t \in T_H}}d[H]t$ and hence $k(N) = \oplus_{t \in T_H} {k(H)t} = k(H){ \otimes_{kH}}kN$. Then, as $k(G) = k(N){ \otimes _{kN}}kG$, it follows from \cite[Chap. 2, Lemma 2.1]{Karp} that $k(G) = k(H){ \otimes _{kH}}kG = { \oplus _{t \in T}}k(H)t$, where $T$ is a transversal for $H$ in $G$. 
\par
Evidently, $k[G] = \sum\nolimits_{_{t \in T}} {k[H]t} $ then, as $k[H] \le k(H)$ and the sum $k(G) = { \oplus _{t \in T}}k(H)t$ is direct, we can conclude that $k[G] = k[H]{ \otimes _{kH}}kG = { \oplus _{t \in T}}k[H]t$.

\end{proof}

\begin{corollary}\label{Corollary 3.1.5} Let $k$ be a finitely generated field and let $G$ be an abelian group of finite total rank such that $char\,k \notin Sp(G)$. Then any faithful irreducible representation $\varphi $ of the group $G$ over the field $k$ is induced from a representation of  some finitely generated subgroup of $G$.
\end{corollary}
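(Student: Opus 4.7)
The plan is to deduce the corollary directly from Theorem~3.1.4 by embedding $G$ into the multiplicative group of a field. The main observation is that the underlying module $M$ of $\varphi$ can be written as $kG/I$ for a faithful maximal ideal $I$, so that $F = kG/I$ becomes the ambient field to which Theorem~3.1.4 applies.

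First I would observe that, since $G$ is abelian, $kG$ is commutative. For any nonzero $a \in M$, simplicity of $M$ forces $M = a \cdot kG$, so $M \cong kG/J$ with $J = \mathrm{Ann}_{kG}(a)$ a maximal ideal; commutativity of $kG$ then yields $J = \mathrm{Ann}_{kG}(M) =: I$. Faithfulness of $\varphi$ gives $I^{\dagger} = 1$, and $k \cap I = 0$ because $I$ is proper while every nonzero element of $k$ is a unit. Hence $F := kG/I$ is a field containing $k$, and $G$ embeds into $F^{*}$ via $\varphi$.

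Next I would apply Theorem~3.1.4 to $k \subseteq F$ and $G \subseteq F^{*}$: the hypotheses (finite total rank of $G$, $\mathrm{char}\,k \notin Sp(G)$, and $k$ finitely generated) transfer directly, yielding a finitely generated subgroup $H \le G$ with $k[G] = k[H] \otimes_{kH} kG$. Translating back to the group algebra, $k[G]$ is precisely the image $kG/I$ and $k[H]$ is the image $kH/(I \cap kH)$, so the tensor-product identity becomes $I = (I \cap kH)\,kG$, i.e.\ $H$ controls $I$. By the equivalence of the equalities (\ref{1.1})--(\ref{1.3}) discussed in Section~\ref{Section 1} for cyclic modules, this control property translates into $M = U \otimes_{kH} kG$ where $U = (1+I) \cdot kH$, which is exactly the statement that $\varphi$ is induced from the representation of the finitely generated subgroup $H$ on $U$.

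The only mildly substantive step is the initial identification of $F$: one needs both the commutativity of $kG$ (to identify $\mathrm{Ann}_{kG}(M)$ with the annihilator of a single element and thereby conclude it is maximal) and the faithfulness of $\varphi$ (to realize $G$ as a subgroup of $F^{*}$). Once $F$ is in hand, the corollary is a mechanical reformulation of Theorem~3.1.4.
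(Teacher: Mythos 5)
Your proof is correct and follows essentially the same route as the paper: identify the module with $kG/I$ for a maximal faithful ideal $I$, observe that $F=kG/I$ is a field with $k\le F$ and $G\hookrightarrow F^{*}$, and then invoke Theorem~3.1.4 to get a finitely generated $H\le G$ controlling $I$. The only difference is that you spell out a few small steps the paper leaves implicit (that $k\cap I=0$, and the translation from $k[G]=k[H]\otimes_{kH}kG$ back to control of $I$ and thence to module induction via the equivalences in Section~1).
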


\begin{proof} Let $F$ be a module of the representation $\varphi $. Since the module $F$ is irreducible, we see that $F = akG \cong kG/I$, where $0 \ne a \in F$ and $I = An{n_{kG}}(a)$ is a maximal ideal of $kG$, and hence the quotient ring $F = kG/I$ is a field. Therefore, $k$ is a subfield of $F$ and $F = k[G]$. Then, by Theorem 3.1.4, there exist a finitely generated subgroup $H$ of $G$ such that $k[G] = k[H]{ \otimes _{kH}}kG = { \oplus _{t \in T}}k[H]t$, where $T$ is a transversal to the subgroup $H$ in the group $G$, and the assertion follows. 
\end{proof}

\subsection{Faithful prime ideals of group rings of abelian groups of finite total rank} \label{Subsection 3.2}

		\begin{theorem}\label{Theorem 3.2.1} Let $k$ be a finitely generated field and let $G$ be an abelian group of finite total rank such that $char\,k \notin Sp(G)$. Let $P$ be a prime faithful ideal of the group ring $kG$. Then:\par
(i)	  there exists a finitely generated subgroup $H$ of $G$ such that $P = (P \cap kH)kG$; \par
(ii)	the ideal $P$ is finitely generated;\par
	\end{theorem}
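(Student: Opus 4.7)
My plan is to reduce Theorem 3.2.1 to Theorem 3.1.4 by realising $G$ as a subgroup of the multiplicative group of a field constructed from $P$, and then transfer the resulting decomposition back through the quotient map.

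First, since $P$ is prime, $R := kG/P$ is a commutative domain; let $F$ denote its field of fractions. The composition $k \hookrightarrow kG \twoheadrightarrow R \hookrightarrow F$ is injective because $k$ is a field, so $k$ is realised as a finitely generated subfield of $F$. The faithfulness condition $P^\dag = 1$ is precisely the statement that the natural map $G \to R^*$ is injective: if $g \neq 1$ lay in the kernel of $G \to R^*$ then $g - 1 \in P$, i.e. $g \in P^\dag$. Consequently $G$ embeds as a subgroup of finite total rank of $F^*$ with $char\,k \notin Sp(G)$, and Theorem 3.1.4 applies: it produces a finitely generated subgroup $H$ of $G$ and a transversal $T$ for $H$ in $G$ with
\[ k[G] \;=\; k[H]\otimes_{kH} kG \;=\; \bigoplus_{t \in T} k[H]\,\bar t, \]
where $\bar t$ denotes the image of $t$ in $R$. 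Under these identifications $k[G]$ coincides with $R = kG/P$ and $k[H]$ with $kH/(P \cap kH)$.

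For part (i) I would transfer this decomposition through the surjection $\pi\colon kG \to R$. The group algebra decomposes as $kG = \bigoplus_{t \in T} (kH)\,t$ as a right $kH$-module, and $\pi$ sends each summand $(kH)\,t$ onto $k[H]\,\bar t$; since $t$ is a unit in $kG$, the restricted kernel is $(P \cap kH)\,t$ (for $a \in kH$ we have $at \in P$ iff $a \in P \cap kH$). Summing over $t \in T$ yields $P = \bigoplus_{t \in T} (P \cap kH)\,t = (P \cap kH)\,kG$, which is precisely (i).

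For part (ii), the group $H$ is finitely generated abelian, so $kH$ is a finitely generated commutative $k$-algebra (a quotient of a Laurent polynomial ring in finitely many indeterminates, tensored with the group algebra of the finite torsion part of $H$) and hence Noetherian by Hilbert's basis theorem. Thus $P \cap kH$ is finitely generated as an ideal of $kH$, say $P \cap kH = (f_1, \ldots, f_m)\,kH$, and (i) then gives $P = (f_1, \ldots, f_m)\,kG$, which proves (ii). The only conceptual step is the initial reduction; once $G$ is embedded into $F^*$ and Theorem 3.1.4 is in hand, the remaining arguments are formal and I anticipate no real obstacle.
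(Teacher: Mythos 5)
Your proposal is correct and follows essentially the same route as the paper: realise $G$ inside $F^*$ via the faithfulness of $P$, invoke Theorem 3.1.4 to get the direct-sum decomposition of $k[G]$, and transfer it back through $kG \twoheadrightarrow kG/P$. The only cosmetic difference is that where the paper cites a result of Karpilovsky on annihilators of induced modules to conclude $P=(P\cap kH)kG$, you unroll that citation into a short direct computation using directness of the sum $\bigoplus_{t\in T}k[H]\bar t$ and the fact that each $t$ is a unit; both yield the same conclusion.
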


\begin{proof} (i) Let $R = kG/P$ and let $F$ be the field of fractions of the commutative domain $R$. As the ideal $P$ is faithful, we can assume that $G \le F^*$. By Theorem 3.1.4, there exists a subgroup $H \le G$ such that $k[G]  =  k[H]{ \otimes _{KH}}kG = { \oplus _{t \in T}}k[H]t$, where $T$ is a transversal to $H$ in $G$.  Evidently, $ann_{kG}k[G] = P$ and $ann_{kH}k[H] = P \cap kH$. Then it follows from \cite[Chap. 2, Theorem 3.4(i)]{Karp} that $an{n_{kG}}k[G] = Id((an{n_{kH}}k[H])kG)$, where $Id((an{n_{kH}}k[H])kG)$ is the sum of all two sided ideals of $k[G]$ contained in $(an{n_{kH}}k[H])kG$. Since the ring $kG$ is commutative, we see that $Id((an{n_{kH}}k[H])kG) = (an{n_{kH}}k[H])kG$ and hence $P = (P \cap kH)kG$. \par
        (ii) Since the subgroup $H$ is finitely generated, the group ring $kH$ is Noetherian. Hence $P \cap kH$ is a finitely generated ideal of $kH$, and the assertion follows.\par
         \end{proof}

\begin{theorem}\label{Theorem 3.2.2} Let $k$ be a finitely generated field and let $G$ be a countable abelian group such that $char\,k \notin Sp(G)$. Suppose that the group ring $kG$ has a faithful prime ideal then:\par 
(i)  each faithful prime ideal of $kG$ is finitely generated if and only if the group $G$ has finite total rank;\par
(ii) if the group ring $kG$ has a faithful maximal finitely generated ideal then the group $G$ has finite total rank. 
\end{theorem}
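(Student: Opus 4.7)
My plan is to prove (ii) first and then deduce (i) from it together with Theorem 3.2.1. For (ii), assume $M$ is a faithful maximal finitely generated ideal of $kG$ and, for contradiction, that $G$ has infinite total rank. Finite generation of $M$ forces $M = (M \cap kH)kG$ for some finitely generated $H \le G$, and $F = kG/M$ is a field with $G \hookrightarrow F^*$. If $\pi(T)$ is infinite (where $T$ is the torsion subgroup of $G$), Lemma 2.1.1(i) gives a nontrivial $X \le T$ with $X \cap H = 1$, and Lemma 2.2.2 embeds $kX$ into $F$; this is impossible because $kX$ has zero divisors while $F$ is a field. If some Sylow $G_p$ has infinite rank (necessarily with $p \ne char\,k$), then $G_p \hookrightarrow F^*$, but the $p$-primary torsion of $F^*$ is contained in the Prufer group $C_{p^\infty}$ and so has rank at most one, another contradiction.

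The main obstacle is the remaining case, where $G/T$ has infinite torsion-free rank. Setting $R = kH/(M \cap kH)$ (a domain inside $F$) and letting $K$ be its field of fractions, the decomposition $kG = \oplus_{t} kH \cdot t$ descends to $F = \oplus_{q \in G/H} K u_q$ as a crossed product, with $u_{q_1}u_{q_2} = c(q_1,q_2)\, u_{q_1q_2}$ for a cocycle $c : G/H \times G/H \to K^*$ coming from $H \subseteq R^*$. Since $is_{G}(H)$ has the same finite torsion-free rank as $H$ while $G$ does not, pick $y \in G \setminus is_{G}(H)$; then $q_y := yH$ has infinite order in $G/H$. Faithfulness of $M$ gives $\bar y \ne 1$ in $F$, hence $\bar y - 1 \in F^*$, so $(\bar y - 1)^{-1} = \sum_p r_p u_p$ has finite support. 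Equating $u_p$-coefficients in $(\bar y - 1) \sum_p r_p u_p = 1$ yields $r_p = r_{q_y^{-1}p}\, c(q_y, q_y^{-1}p)$ for $p \ne e$ and $r_{q_y^{-1}} c(q_y, q_y^{-1}) - r_e = 1$ for $p = e$. Iterating along the cyclic orbit $\{q_y^n : n \in \mathbb{Z}\}$, one obtains $r_{q_y^n} = r_e \prod_{i=0}^{n-1} c(q_y, q_y^i)$ for $n \ge 1$ and $r_{q_y^{-n}} = (r_e + 1)\big/\prod_{i=1}^n c(q_y, q_y^{-i})$ for $n \ge 1$; finite support forces $r_e = 0$ from the positive side and $r_e = -1$ from the negative side, contradicting $0 \ne -1$. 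This completes (ii).

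For (i), the direction ``finite total rank $\Rightarrow$ every faithful prime is finitely generated'' is immediate from Theorem 3.2.1(ii). For the converse I argue contrapositively: assuming $G$ has infinite total rank I exhibit a non-finitely-generated faithful prime. If $\pi(T)$ is infinite, the Case A argument from (ii) applies verbatim to the given faithful prime $P_0$ with the domain $kG/P_0$ in place of the field $F$, showing $P_0$ itself is not finitely generated. The infinite-$G_p$-rank case cannot occur, since it prevents any faithful prime from existing. In the remaining case ($\pi(T)$ finite, each Sylow of rank at most one, $G/T$ of infinite rank), the plan is to produce a faithful maximal ideal of $kG$, which by (ii) is automatically not finitely generated. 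When $k$ admits an algebraic extension $F'/k$ whose multiplicative group has enough torsion-free rank (in particular whenever $\bar k^*$ is not entirely torsion), embed $T$ into $\mu(\bar k)$ and extend to $G \hookrightarrow F'^*$; then $k[\bar G] \subseteq F'$ is a field (subring of elements algebraic over $k$), so $\ker(kG \to k[\bar G])$ is a faithful maximal ideal. In the residual case where $k$ is finite and $\bar k^*$ is torsion, instead embed a free abelian subgroup $Y \le G$ of infinite rank into $k(x)^*$ by sending its generators to distinct irreducible polynomials $f_j \in k[x]$, extend to $G \hookrightarrow F^*$ for a suitable $F \supseteq k(x)$, and observe directly that the relations $y_j - f_j(y_1) \in P$ cannot all lie in $(P \cap kH)kG$: for $H$ not containing $y_j$, the $y_j$-component of $y_j - f_j(y_1)$ in the decomposition $kG = \oplus kH \cdot t$ equals the identity $1 \in kH$, which cannot belong to the proper ideal $P \cap kH$.
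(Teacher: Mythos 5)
Your argument is essentially correct, and the overall architecture (reduce via $\pi(T)$ infinite $\Rightarrow$ contradiction with Lemma 2.1.1(i) + Lemma 2.2.2; locally cyclic torsion excludes an infinite-rank Sylow; then handle $r(G/T)=\infty$) matches the paper, but you implement the two key steps differently. For (ii), the paper passes to $L=is_G(H)$ so that $kG/P$ becomes a crossed product $\tilde k*\tilde G$ over the \emph{torsion-free} infinite group $\tilde G=G/L$, and then simply remarks that such a crossed product has proper nonzero ideals and hence is not a field; you instead stay over $G/H$, pass to the field of fractions $K$ of $kH/(M\cap kH)$, pick a single $y$ with $yH$ of infinite order, and run the explicit finite-support recursion on the coefficients of $(\bar y-1)^{-1}$ in $F=\oplus_q Ku_q$. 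Both are ``the crossed product with infinite base group is not a field,'' but yours is more hands-on and avoids the isolator; the paper's is shorter once one is willing to invoke the orderability of a torsion-free abelian group. For the converse of (i), the paper always builds a faithful prime by matching the divisible hull $\bar G$ to the known structure of $F^*$ (with $F$ the algebraic closure of $k$ or of $k(t)$) and kills finite generation via a transcendence-degree count coming from Lemma 2.1.1(ii) and Lemma 2.2.2; you instead, when $k$ is infinite, upgrade the constructed ideal to a \emph{maximal} one (since $k[\bar G]\subseteq\bar k$ is integral over $k$, hence a field) and then invoke (ii) — a cleaner reduction — falling back to a direct coefficient computation only when $k$ is finite.

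There is, however, a small gap in that last fallback. You argue that some relation $y_j-f_j(y_1)\in P$ fails to lie in $(P\cap kH)kG$ because ``for $H$ not containing $y_j$ the $y_j$-component equals $1$.'' That coefficient is $1$ only if the coset $Hy_j$ is disjoint from the cosets $Hy_1^i$ for the finitely many $i$ in the support of $f_j$, i.e.\ you need $y_j\notin\langle H,y_1\rangle$, not merely $y_j\notin H$. Such a $j$ exists because $\langle H,y_1\rangle$ has finite torsion-free rank while $Y$ has infinite rank, but you should say so; alternatively, take $Y$ with $Y\cap H=1$ from Lemma 2.1.1(ii), apply Lemma 2.2.2 to get $kY\cap P=0$, and conclude as the paper does by comparing transcendence degrees of $kG/P$ and of the algebraic closure of $k(x)$. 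With that repair the proof goes through.
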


\begin{proof} (i) If the group $G$ has finite total rank, then it follows from Theorem 3.2.1(ii) that each faithful prime ideal of the group ring $kG$ is finitely generated. \par
	Suppose that the group ring $kG$ has a faithful prime ideal and each such ideal is finitely generated. Let $P$ be a finitely generated faithful prime ideal of $kG$ and let $F$ be the field of fractions of the commutative domain $R = kG/P$. Since the ideal $P$ is faithful, we can assume that that $G \le F^*$. Then it follows from \cite[Theorem 127.3]{Fuch1973} that the torsion subgroup $T$ of the group $G$ is locally cyclic and $char \, k \notin \pi (T)$. \par
Suppose that the set $\pi (T)$ is infinite. Since the ideal $P$ is finitely generated, there is a finitely generated subgroup $H$ of $G$ such that $P = (kH \cap P)kG$. As the set $\pi (T)$ is infinite, it follows from Lemma 2.1.1(i) that there is a nontrivial subgroup $X$ of $T$ such that $X \cap H = 1$. Then it follows from Lemma 2.2.2 that the quotient ring $kG/P$ has a subring isomorphic to $kX$. However, it is well known that the group ring $kX$ of the torsion group $X$ has zero divisors but it is impossible because the ideal $P$ is prime. So, a contradiction is obtained and hence the set $\pi (T)$ is finite. Since the subgroup $T$ is locally cyclic, it implies that $T$ is Chernikov and hence the total rank ${r_t}(T)$ is finite.\par
Suppose that the total rank ${r_t}(G)$ is infinite then, as ${r_t}(T)$ is finite, the rank $r(G/T)$ of the torsion-free quotient group $G/T$ is infinite. Now, we will show that in this case the group ring has a faithful prime ideal which does not admit a finite set of generators. Let $F$ be the algebraic closure of the field $k$ if the field $k$ is infinite and $F$ be the algebraic closure of a simple transcendent extension $k(t)$ of $k$. It is not difficult to note that the field $F$ has finite transcendence degree over $k$ if the field $k$ is finite. Then it follows from \cite[Theorem 127.3]{Fuch1973} that  $ F^*= (\times_{p_i \in \sigma}{C_{p_i^\infty }} )\times(\times_{i\in \mathbb{N}}\mathbb{Q}_{i} )$ if  $char \, k = 0$  and $ F^*= (\times_{p_i \in \sigma_p}C^\infty_{p_i} )\times(\times_{i\in \mathbb{N}}\mathbb{Q}_{i} ) $
  if $char\, k = p > 0$  , where $  \sigma = \{p_i \, | \, i \in \mathbb{N} \} $ is the set of all prime integers, $\sigma_p = \sigma \setminus \{ p \}$, ${C_{p_i^\infty }}$ is a quasicyclic ${p_i}$-group and $\mathbb{Q}_{i}$ is a group isomorphic to the additive group of rational numbers. \par
 
Let $\bar G$ be the divisible hull of the group $G$. Then, as $T = t(G)$ is a locally cyclic $\pi $-group, where $\pi  = \pi (T)$ is the finite set of prime divisors of orders of elements of $T$, it follows from \cite[Theorem 23.1]{Fuch1973} that 
$\bar G = (\times_{p_i \in \pi (G)}{C_{p_i^\infty }})\times (\times_{i \in \mathbb{N}}  \mathbb{Q}_{i}) )$. 
As it was mentioned above, if $char \, k = p > 0$ then $p \notin \pi $. It easily implies that there is a monomorphism of $\bar G$ into $F^*$ and hence, as $G \le \bar G$, there is a monomorphism $\varphi :G \to F^*$. Since $k \le F$, the monomorphism $\varphi $ can be extended to a ring homomorphism $\varphi :kG \to F$. As $kG/Ker\varphi  \cong \varphi (kG) \le F$, we can conclude that $P = Ker\varphi $ is a faithful prime ideal of $kG$. \par 
Suppose that the ideal $P$ is finitely generated then there is a finitely generated subgroup $H$ of $P$ such that $P = (P \cap kH)kG$. Since the torsion-free quotient group $G/T$ has infinite rank, it follows from Lemma 2.1.1(ii) that there is a free abelian subgroup $Y \le G$ of infinite rank such that $Y \cap H = 1$. Then it follows from Lemma 2.2.2 that $kY \cap P = 0$ and, as $kG/P \cong \varphi (kG) \le F$, we can conclude that the transcendence degree of the field $F$ over $k$ is infinite that leads to a contradiction. \par
(ii) Suppose that $kG$ has a faithful maximal finitely generated ideal $P$ then, as ideal $P$ is prime, the arguments of (i) show that ${r_t}(T) < \infty $. Therefore, if  ${r_t}(G) = \infty $ then ${r_t}(G/T) = \infty $. Since the ideal $P$ is finitely generated, there is a finitely generated subgroup $H \le G$ such that $(kH \cap P)kG = P$. Let $L = i{s_G}(H)$ then the quotient group $G/L$ is torsion-free of infinite rank and $(kL \cap P)kG = P$. Evidently, we can consider the quotient ring $kG/P$ as an abelian crossed product $\tilde k* \tilde G$, where 
$\tilde k = kL/(kL \cap P)$ is a domain. and $\tilde G = G/L$ is a torsion-free abelian group of infinite rank. 
It is not difficult to note that $a\tilde k* \tilde G$ is a proper ideal of $\tilde k* \tilde G$ for any $0 \ne a \in \tilde k* \tilde G$ but it is impossible because the ideal $P$ is maximal. The obtained contradiction shows that ${r_t}(G) < \infty $.
\end{proof}

	Let $R$ be a ring and $G$ be a group. An $RG$-module $W$ is said to be fully faithful if for any nonzero submodule $V$ of $W$ the centralizer ${C_G}(V)$ of $V$ in $G$ is trivial (i.e. ${C_G}(V) = 1$). It is easy to note that ${C_G}(V) = {(An{n_{RG}}(V))^\dag }$ and hence $W$ is fully faithful if and only if ${(An{n_{RG}}(V))^\dag } = 1$ for any nonzero submodule $V$ of $W$. Therefore, if the group $G$ is abelian then the module $W$ is fully faithful if and only if ${(An{n_{RG}}(a))^\dag } = 1$ for any element $0 \ne a \in W$. 

	\begin{theorem}\label{Theorem 3.2.3} Let $k$ be a finitely generated field and let $G$ be an abelian group of finite total rank such that $char\,k \notin Sp(G)$. Let $W$ be a fully faithful $kG$-module then there is an element $0 \ne a \in W$ such that $An{n_{RG}}(a)$ is a prime faithful ideal of $kG$. 
\end{theorem}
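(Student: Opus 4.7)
The plan is to apply the classical commutative-algebra principle that an ideal maximal among annihilators of nonzero elements of a module is automatically prime, and to use the hypotheses on $G$ and $k$ to ensure the existence of such a maximal element.

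First, I pick any nonzero $a_0 \in W$ and put $I_0 := An{n_{kG}}(a_0)$. Full faithfulness of $W$ tells us $I_0$ is faithful; more generally, every element of $\mathcal{A} := \{An{n_{kG}}(b) : 0 \neq b \in a_0 kG\}$ is a faithful ideal of $kG$ containing $I_0$. If $J = An{n_{kG}}(b)$ is maximal in $\mathcal{A}$, then $J$ is prime: if $xy \in J$ with $y \notin J$, then $yb \neq 0$ and $An{n_{kG}}(yb) \supsetneq J$ contains $x$, contradicting maximality. The task therefore reduces to producing such a maximal element of $\mathcal{A}$; setting $a := b$ for this maximal $b$ then gives an annihilator that is both prime and, by full faithfulness of $W$, automatically faithful.

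To produce a maximal element of $\mathcal{A}$, my plan is to reduce to a Noetherian situation using Theorem 3.1.4. Specifically, I first select a minimal prime $\mathfrak{p}$ of $R_0 := kG/I_0$ whose preimage $P$ in $kG$ satisfies $P^{\dag} = 1$, so that $G$ embeds into the field of fractions $F$ of $R_0/\mathfrak{p}$. Theorem 3.1.4 applied to the image of $G$ in $F^*$ then supplies a finitely generated subgroup $H \leq G$ and a transversal $T$ with $k[G] = \bigoplus_{t \in T} k[H] t$ inside $F$. Because $kH$ is Noetherian, the restrictions $J \cap kH$ of ideals $J \in \mathcal{A}$ satisfy the ascending chain condition; invoking Lemma 2.2.3 to transfer ideals across $kH \subseteq kG$ via the free decomposition, I will argue that each $J \in \mathcal{A}$ is determined by $J \cap kH$, so chains in $\mathcal{A}$ stabilize and Zorn produces the desired maximal element.

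The main obstacle I foresee is securing the minimal prime $\mathfrak{p}$ with faithful preimage $P$. A priori each minimal prime $\mathfrak{p}_i$ of $R_0$ might yield a non-faithful preimage $P_i$; picking $1 \neq g_i \in P_i^{\dag}$, the product $\prod_i(\bar g_i - 1)$ lies in $\bigcap_i \mathfrak{p}_i = \sqrt{0}$ and is nilpotent in $R_0$, but this does not immediately contradict $I_0^{\dag} = 1$. To close this gap I expect to use Lemma 2.2.1(i) to realize $kG$ as integral over $kH$ for a finitely generated dense $H$, reducing $R_0$ to having only finitely many minimal primes, and then either to replace $a_0$ by $y \cdot a_0$ for a suitable $y$ (enlarging $I_0$ so as to eliminate minimal primes with non-faithful preimage) or to run a short induction on the total rank $r_t(G)$, ending with a nonzero element of $W$ whose annihilator has all minimal primes faithful.
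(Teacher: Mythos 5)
Your opening move is sound: the observation that an ideal maximal in $\mathcal{A} := \{\operatorname{ann}_{kG}(b) : 0 \neq b \in a_0 kG\}$ is automatically prime, and that full faithfulness makes every member of $\mathcal{A}$ faithful, is exactly the right first step and a classical technique. The problem is that your route to producing a maximal element of $\mathcal{A}$ has a genuine gap, and you have essentially identified it yourself without closing it.

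The gap is twofold. First, there is a circularity: Theorem 3.1.4 requires $G$ to be presented as a subgroup of $F^*$ for a field $F$ containing $k$, i.e.\ it requires a \emph{faithful prime} ideal of $kG$ to already exist so that $kG/P$ is a domain and $G \hookrightarrow (\mathrm{Frac}(kG/P))^*$. But producing such a faithful prime (as an annihilator) is precisely what Theorem 3.2.3 is supposed to achieve, and $I_0 = \operatorname{ann}_{kG}(a_0)$ is merely faithful, not prime. Passing to a minimal prime $\mathfrak{p}$ of $kG/I_0$ with faithful preimage is not known to be possible a priori, and the proposed remedy (replace $a_0$ by $y a_0$, or induct on total rank) is left entirely unspecified. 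Second, even granting a faithful prime $P \supseteq I_0$ and the resulting decomposition $k[G] = \bigoplus_{t \in T} k[H]t$ from Theorem 3.1.4, this decomposition lives in the quotient $kG/P$ and tells you only that $P$ itself is controlled by $H$. It does \emph{not} tell you that an arbitrary $J \in \mathcal{A}$ (which is an annihilator inside $kG/I_0$, a ring that is typically not a domain and need not map onto $kG/P$ compatibly) satisfies $J = (J \cap kH)kG$. Without that, Noetherianity of $kH$ gives you no ascending chain condition on $\mathcal{A}$, and the appeal to Zorn collapses. Lemma 2.2.3 does not bridge this, as it concerns $D$-invariant ideals of an abelian normal subgroup inside a larger group ring, not annihilators of module elements.

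The paper avoids both problems by going bottom-up rather than top-down: it fixes an ascending chain $\{H_i\}$ of finitely generated dense subgroups with $\bigcup_i H_i = G$, and at each level uses Noetherianity of $kH_i$ together with full faithfulness to pick a prime ideal $P_i \leq kH_i$ maximal among annihilators of nonzero elements of $a_{i-1}kG$, obtaining an ascending chain $P_1 \leq P_2 \leq \cdots$ of faithful primes whose union $P$ is a faithful prime of $kG$. It then invokes Theorem 3.2.1(ii) to conclude that $P$ is controlled by some $kH_i$, so $P = P_i kG$, and Lemma 2.2.1(iii) to identify $P$ with $\operatorname{ann}_{kG}(a_i)$. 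Your argument would need a comparable mechanism for showing that chains in $\mathcal{A}$ stabilize before it could be made to work; as written it does not have one.
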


	\begin{proof} Since the group $G$ has finite total rank, there is an ascending chain 
$ \{ H_i \, | \, i \in \mathbb{N}\} $	 of finitely generated dense subgroups of $G$ such that $\left| {{H_{i + 1}}:{H_i}} \right| < \infty $ for each $ i \in \mathbb{N} $ and $ \bigcup_{i \in \mathbb{N} } H_i = G $. By \cite[Proposition 1.6]{Pass1989}, each group ring $k{H_i}$ is Noetherian because finitely generated abelian groups are polycyclic. Then it follows from \cite[Chap. IV, \S 2, Corollary 1]{Bour} that there is a prime ideal ${P_1} \le k{H_1}$ which is maximal with respect to ${P_1} = ann_{k{H_1}}({a_1})$ for some nonzero element $0 \ne {a_1} \in W$. Put ${W_1} = {a_1}kG$ then $ann_{k{H_1}}({W_1}) = {P_1}$. Since the module $W$ is fully faithful, ${(ann_{kG}({a_1}))^\dag } = 1$ and it easily implies that the ideal ${P_1}$ is faithful. The above arguments show that there is a prime ideal ${P_2} \le k{H_2}$ which is maximal with respect ${P_2} = ann_{k{H_2}}({a_2})$ for some nonzero element $0 \ne {a_2} \in {W_1}$. As $ann_{k{H_1}}({W_1}) = {P_1}$, we see that ${P_1} \le {P_2}$. Put ${W_2} = {a_2}kG$ then $ann_{k{H_2}}({W_2}) = {P_2}$. So, continuing this process we obtain an ascending chain $ \{ P_i \, | \, i \in \mathbb{N}\} $	, where each ${P_i}$ is a faithful prime ideal of $k{H_i}$. Therefore,  $ \bigcup_{i \in \mathbb{N} } P_i = P $ is a prime faithful ideal of $kG$. Since each ideal ${P_i}$ is maximal with respect $an{n_{{W_{i - 1}}}}({P_i}) \ne 0$, it is easy to note that ${P_i} = k{H_i} \cap {P_j}$ for any $j \ge i$ and hence ${P_i} = k{H_i} \cap P$. Then it follows from Theorem 3.2.1(ii) that there is a positive integer $ i \in \mathbb{N} $ such that $P = (k{H_i} \cap P)kG$ and hence $P = {P_i}kG$. Therefore, $P = {P_i}kG$ annihilates $a = {a_i} \ne 0$ and hence $ann_{kG}(a) \ge P$. Since $ann_{kG}(a) \cap k{H_i} = ann_{k{H_i}}(a) = {P_i} = P \cap k{H_i}$, it follows from Lemma 2.2.1(iii) that $an{n_{kG}}(a) = P$. 
\end{proof}

\section{On certain induced modules over group rings of nilpotent groups}  \label{Section 4}

\subsection{Rings of quotients and induced modules over crossed products of nilpotent groups}  \label{Subsection 4.1}  

A ring $R * G$ is called a crossed product of a ring $R$ and a group $G$ if $R \le R * G$ and there is an injective mapping ${\varphi ^*}:g \mapsto \bar g$ of the group $G$ to the group of units $U(R * G)$ of  $R * G$ such that each element $a \in R * G$ can be uniquely presented as a finite sum $a = \sum\nolimits_{g \in G} {{a_g}} \bar g$, where ${a_g} \in R$. The addition of two such sums is defined component-wise. The multiplication is defined by the formulas $\bar g\bar h = t(g,h)\overline {gh} $ and $r\bar g = \bar g({\bar g^{ - 1}}r\bar g)$, where $g,h \in G$,$r \in R$, ${\bar g^{ - 1}}r\bar g \in R$ and $t(g,h)$ is a unit of  $R$ (see \cite{Pass1989}). \par
If $a = \sum\nolimits_{g \in G} {{a_g}} \bar g \in R * G$, then the set $Supp(a)$ of  elements $g \in G$ such that ${a_g} \ne 0$ is called the support of the element $a$. Let $H$ be a subgroup of  $G$ then the set of  elements $a \in R * G$ such that $Supp(a) \subseteq H$ forms a crossed product $R * H$ contained in $R * G$. If the  subgroup $H$ is normal and $P$ is a $\bar G$-invariant ideal of the ring $R * H$ then it is not difficult to verify that  the quotient ring ${R * G} / {PR * G}$ is a crossed product $({R * H}/ {P}) * (G /H)$ of the quotient ring ${{R * H} /P}$ and the quotient group $G / H$. In particular, if $RG$ is a group ring, $H$ is a normal subgroup of the group  $G$ and $P$ is a $G$-invariant ideal of the group ring $RH$ then the quotient ring ${{RG} / {PRG}}$ is a crossed  product $({RH} /P) * (G / H)$ of the quotient ring ${RH} / P$ and the quotient group $G / H$. The last example  shows the main way in which crossed products arise in studying of group rings and modules over them. \par
If $V$ is a $R * H$-module then we can define the tensor product $V{ \otimes _{R * H}}R * G$ which is a right $R * G$-module. The arguments of \cite[Chap. 2, Corollary 1.2(i)]{Karp} show that an $R * G$-module $M = V{ \otimes_{R * H}}R * G$ if and only if $M = { \oplus_{t \in T}}V\bar t$, where $T$ is a right transversal for $H$ in  $G$. 
	
\begin{proposition}\label{Proposition 4.1.1} Let $G$ be a group, $D$ be a normal subgroup of $G$ and $R$ be 
a ring. Suppose that there exists a crossed product $R * G$ such that $R * D$ is an Ore domain. Then there exists a partial right ring of quotients $R * G{(R*D)^{ - 1}} = \left\{ {r \cdot {s^{ - 1}}|r \in R * G,s \in R*D} \right\}$ and: \par
(i) $R * G{(R*D)^{ - 1}}$ is a crossed product $(R * D{(R*D)^{ - 1}}) * (G/D) = $  
${ \oplus _{t \in T}}(R * D{(R*D)^{ - 1}})\bar t = (R * D{(R*D)^{ - 1}}){ \otimes _{R * D}}R * G$, where $R * D{(R*D)^{ - 1}}$ is a division ring and $T$ is a right transversal for $D$ in $G$;\par
(ii) if the quotient group $G/D$ is polycyclic-by-finite then the ring $R * G{(R*D)^{ - 1}}$ is Noetherian;\par
	(iii) if $M = aR * G$ is a cyclic $R * G$-module which is $R * D$-torsion-free then there exists a cyclic $R * G{(R*D)^{ - 1}}$-module $W = aR * G{(R*D)^{ - 1}}$ such that $M \le W$ and $W = \left\{ {m{s^{ - 1}}|m \in M,s \in R * D} \right\}$.
	\end{proposition}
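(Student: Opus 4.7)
The plan is to first establish that $S := R*D \setminus \{0\}$ is a right denominator set in $R*G$, and then derive parts (i)--(iii) from standard localization theory together with the crossed product structure of $R*G$ over $R*D$.

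\medskip

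\textbf{Step 1 (Ore condition).} The key input is that $D$ is normal in $G$: for every $g \in G$, the map $x \mapsto \bar g^{-1} x \bar g$ is a ring automorphism of $R*D$, hence preserves $S = R*D\setminus\{0\}$ since $R*D$ is a domain. Given $a \in R*G$ and $s \in S$, I decompose $a = \sum_{i=1}^n a_i \bar{t_i}$ with $a_i \in R*D$ and $t_i$ in a transversal $T$ for $D$ in $G$. Pushing $s'$ through $\bar{t_i}$ gives $a s' = \sum_i a_i s'^{(i)} \bar{t_i}$, where $s'^{(i)} = \bar{t_i} s' \bar{t_i}^{-1} \in S$. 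To obtain $a s' = s a'$ with $a' = \sum a_i'\bar{t_i}$ it suffices to find $s'$ such that $a_i s'^{(i)} \in s\cdot R*D$ for every $i$. Applying the right Ore condition in $R*D$ to each pair $(a_i,s)$ yields $v_i \in S$ with $a_i v_i \in s \cdot R*D$; conjugating back by $\bar{t_i}^{-1}$, one needs $s'$ to be a common right multiple in $R*D$ of the elements $\tilde v_i = \bar{t_i}^{-1} v_i \bar{t_i} \in S$, which exists by repeated use of the Ore condition in $R*D$. Regularity of $S$ inside $R*G$ is similar (and even easier, since $R*G$ is a free left $R*D$-module on $T$, so left multiplication by $s \in S$ is injective). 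Therefore the right ring of quotients $R*G(R*D)^{-1}=\{r s^{-1}\mid r\in R*G,\ s\in S\}$ exists.

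\medskip

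\textbf{Step 2 (Crossed product structure, part (i)).} Because $R*D$ is an Ore domain, $Q := R*D(R*D)^{-1}$ is a division ring. Every element of $R*G(R*D)^{-1}$ has the form $r s^{-1}$ with $r=\sum r_i \bar{t_i}$, and writing $s^{-1} \bar{t_i} = \bar{t_i}\, s^{(i),-1}$ with $s^{(i)}=\bar{t_i}^{-1} s \bar{t_i}\in S$, I obtain $r s^{-1} \in \sum_i Q \bar{t_i}$, giving the inclusion $R*G(R*D)^{-1} \subseteq \sum_{t\in T} Q\bar t$; the reverse inclusion is clear. For directness of the sum, I use that $Q$ admits common \emph{left} denominators: any finite family $q_i \in Q$ can be written $q_i = s^{-1} r_i$ with a common $s \in S$ and $r_i \in R*D$. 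Then a relation $\sum q_i \bar{t_i}=0$ becomes $s^{-1}\sum r_i \bar{t_i}=0$, i.e.\ $\sum r_i \bar{t_i}=0$ in $R*G$, which forces all $r_i=0$, hence all $q_i=0$. This shows $R*G(R*D)^{-1} = \bigoplus_{t\in T} Q\bar t$, and the action of $G$ on $Q$ by conjugation together with the cocycle inherited from $R*G$ identifies this with a crossed product $Q * (G/D) \cong Q \otimes_{R*D} R*G$.

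\medskip

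\textbf{Step 3 (Noetherianity, part (ii)) and localization of cyclic modules (part (iii)).} If $G/D$ is polycyclic-by-finite, then by the crossed product version of P.~Hall's theorem (see, e.g., the treatment in \cite{Pass1989}), the crossed product of the Noetherian ring $Q$ (a division ring) by $G/D$ is Noetherian; since $R*G(R*D)^{-1}$ is precisely this crossed product by Step~2, part (ii) follows. For (iii), the module $M = aR*G$ being $R*D$-torsion-free means $m s = 0$ with $m \in M$, $s \in S$ forces $m=0$; this is exactly the condition that $M$ injects into its localization $M \otimes_{R*G} R*G(R*D)^{-1}$ at the denominator set $S$. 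The image of $a \otimes 1$ generates this localization as an $R*G(R*D)^{-1}$-module and coincides with $aR*G(R*D)^{-1} = \{m s^{-1} \mid m \in M,\ s \in S\}$, yielding the desired $W$ with $M \le W$.

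\medskip

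The technical heart of the proof is Step~1: verifying the Ore condition requires simultaneously clearing denominators for finitely many coefficients $a_i$, all of which must be handled by a \emph{single} $s' \in R*D$, and this forces the common-right-multiple argument on the conjugated elements $\tilde v_i$. Once this is in place, the remaining assertions are essentially formal consequences of the standard theory of Ore localization and the known Noetherianity results for crossed products by polycyclic-by-finite groups.
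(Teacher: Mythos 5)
Your proposal follows essentially the same route as the paper: you verify the right Ore condition for $S=R*D\setminus\{0\}$ in $R*G$ by splitting $a=\sum a_i\bar t_i$, applying the Ore condition of $R*D$ to each pair $(a_i,s)$, and then forming a common right multiple of the conjugated denominators $\tilde v_i=\bar t_i^{-1}v_i\bar t_i$ via uniformity --- this is precisely the paper's argument. Parts (ii) and (iii) are also handled as in the paper (Passman's crossed-product Noetherianity result and the standard embedding of a torsion-free cyclic module into its localization, with $IQ\cap R*G=I$ for $I=\mathrm{Ann}_{R*G}(a)$).

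One subtle point in Step~2: you establish directness of $\bigoplus_{t\in T} Q\bar t$ by invoking common \emph{left} denominators, writing each $q_i=s^{-1}r_i$ with a single $s$. This is available exactly when $R*D$ is also \emph{left} Ore; the stated hypothesis is only that $R*D$ is an Ore domain, and your Step~1 verifies only the right Ore property. In the paper's applications $R*D$ is always a Noetherian domain (hence two-sided Ore), so the conclusion is fine there, but to prove directness under only the right Ore hypothesis you should instead clear denominators on the right: write $q_t\bar t=r_t\bar t\,(s_t')^{-1}$ with $s_t'=\bar t^{-1}s_t\bar t$, pick a nonzero $w\in\bigcap_t s_t'R*D$ by uniformity (so $w=s_t'u_t$ with $u_t\neq 0$), and multiply the relation on the right by $w$ to land in $R*G=\bigoplus_t R*D\,\bar t$, where uniqueness of coefficients and the absence of zero divisors force each $r_t=0$. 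With that adjustment the argument is complete; otherwise the proof is a faithful rendering of the paper's.
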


\begin{proof} 	It is easy to note that $R * G$ can be presented as a crossed product $(R * D) * ({G /D}) = R * G$ and hence any nonzero element of $R * G$ can be presented as a finite sum $\sum\nolimits_{g \in T} {{a_g}\bar g} $, where $0 \ne {a_g} \in R * D$ and  $T$ is a right transversal for $D$ in $G$. Since $R * D$ is a domain, it implies that all nonzero elements of the subring $R * D$ are regular in $R * G$. Then it follows from \cite[Theorem 2.1.12]{Pass1989} that the ring $R * G{(R*D)^{ - 1}}$ exists if $R * D$ satisfies the right Ore condition, i.e. for any nonzero elements $a \in R * D$ and  $x \in R * G$ there are nonzero elements $b \in R * D$ and $y \in R * G$ such that $ay = xb$. Let $x = \sum\nolimits_{g \in T} {{a_g}\bar g} $, as $R * D$ is an Ore domain, for each ${a_g}$ there is $0 \ne {b_g} \in R * D$ such that ${a_g} \cdot {b_g} = a \cdot {c_g}$ for some $0 \ne {c_g} \in R * D$. Since the Ore domain $R * D$ is uniform and ${({b_g}R * D)^{\bar g}}$ is a right ideal of $R * D$ for each $g \in G/A$, we see that there is a nonzero element $0 \ne b \in \bigcap\nolimits_{g \in T} {({b_g}R * D)^{\bar g}}$. Then for each $g \in T$ the element $b$ can be presented in the form $b = {({b_g}{d_g})^{\bar g}}$, where   
$ {d_g}\in R * D $ and hence $xb = (\sum\nolimits_{g \in T} {{a_g}\bar g} )b = $$\sum\nolimits_{g \in T} {{a_g}\bar g} {({b_g}{d_g})^{\bar g}} = $$\sum\nolimits_{g \in T} {{a_g}} ({b_g}{d_g})\bar g = \sum\nolimits_{g \in T} a {c_g}{d_g}\bar g = a(\sum\nolimits_{g \in T} {{c_g}} {d_g}\bar g) = ay$, where $y = \sum\nolimits_{g \in T} {{c_g}} {d_g}\bar g$. Thus, $R * D$ satisfies the right Ore condition and hence the ring $R * G{(R*D)^{ - 1}}$ exists. \par 
	(i) Since $R*D$ is an Ore domain, it is not difficult to show that a subset $R * D{(R*D)^{ - 1}} = \left\{ {r \cdot {s^{ - 1}}|r,s \in R * D} \right\}$ of $R * G{(R*D)^{ - 1}}$ is a subring of $R * G{(R*D)^{ - 1}}$ and $R * D{(R*D)^{ - 1}}$ is a division ring.  It follows from the definition of crossed products that $R * G$ can be presented as a crossed product $(R * D) * (G/D) = { \oplus _{t \in T}}(R * D)\bar t$. Therefore, $R * G{(R*D)^{ - 1}}={ \oplus _{t \in T}}(R * D{(R*D)^{ - 1}})\bar t = (R * D{(R*D)^{ - 1}}){ \otimes _{R * D}}R * G$. \par 
	
	(ii) Since, by (i),  $R * G{(R*D)^{ - 1}} = (R * D{(R*D)^{ - 1}}) * ({G /D})$ , where $R * D{(R*D)^{ - 1}}$ is   a division ring, the assertion follows from \cite[Proposition 1.6]{Pass1989}. \par 
	
(iii)  Let $I = An{n_{R * G}}(a)$, then $M \cong {{R * G}/I}$. Put $Q = R * G{(R*D)^{ - 1}}$ and $W = {Q /{IQ}}$ then $W$ is a cyclic $Q$-module. To prove the embedding $M \le W$ it is sufficient to show that $IQ\bigcap {R * G = I} $. Let $x \in IQ\bigcap {R * G} $ then $x = \sum\nolimits_{i = 1}^n {{b_i}a_i^{ - 1}} $, where ${b_i} \in I$ and ${a_i} \in R * D$. Since the Ore domain $R * D$ is uniform, the intersection of the right ideals generated by the elements ${a_i}$ is nonzero. Therefore, there exists an element $0 \ne y \in R * D$ such that $y = {a_i}{x_i}$ for each $i$, where ${x_i} \in R * D$. Then $xy = \sum\nolimits_{i = 1}^n {{b_i}{x_i}}  \in I$ and hence, as the modulus $M$ $R * D$-torsion-free, we can conclude that $x \in I$. Therefore, $IQ\bigcap {R * G}  \subseteq I$ and the inverse inclusion is obvious. Thus, $M \le W$ and we can assume that $W = M \cdot R * G{(R*D)^{ - 1}} = M{(R*D)^{ - 1}} = \left\{ {m{s^{ - 1}}|m \in M,s \in R * D} \right\}$.
\end{proof}

By the Krull dimension of a ring $R$ we mean the Krull dimension of the right $R$-module ${R_R}$. If $M$ is an $R$-module then ${K_R}(M)$denotes the Krull dimension of $M$. (see \cite[Chap. 6]{McRo} for the definitions).

\begin{lemma}\label{Lemma 4.1.2} Let $D$ be a nilpotent group which has a series $A \le C \le D$, where $A$ is an isolated abelian normal subgroup of D and the subgroup $C$ is dense in $D$. Let $R$ be a commutative domain and suppose that the group ring $RA$ has a  $G$-invariant prime ideal $P$. Let $\tilde R * \tilde D = RD/PRD$ and $\tilde R * \tilde C = RC/PRC$, where $\tilde R = RA/P$, $\tilde D = D/A$ and $\tilde C = C/A$. Then:\par 
(i) if $\left| {D:C} \right| < \infty $ and $M$ is an $\tilde R * \tilde D$-module then ${K_{\tilde R * \tilde D}}(M) = {K_{\tilde R * \tilde C}}(M)$;\par
(ii) if the domain $R$ and the group $D$ are finitely generated then the ring $\tilde R * \tilde D = RD/PRD$ has Krull dimension $\rho  = \dim \tilde R + h(D)$ and is $\rho $-critical;\par
	(iii) for any nonzero element $0 \ne \tilde a \in \tilde R * \tilde D$ there is an element $0 \ne \tilde b \in \tilde R * \tilde D$ such that $0 \ne \tilde a\tilde b = \tilde c \in \tilde R * \tilde C$;\par
	(iv) there exists a right ring of quotients $\tilde R * \tilde D{(\tilde R * \tilde D)^{ - 1}}$ = $\tilde R * \tilde C{(\tilde R * \tilde C)^{ - 1}}{ \otimes _{\tilde R * \tilde C{{(\tilde R * \tilde C)}^{ - 1}}}}\tilde R * \tilde D = { \oplus _{t \in T}}(\tilde R * \tilde C{(\tilde R * \tilde C)^{ - 1}})\bar t$, where $T$ is a right transversal for $C$ in $D$.
\end{lemma}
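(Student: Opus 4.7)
The plan is to treat (iii) as the core technical step, from which (ii) and (iv) follow by standard Krull-dimension and Ore-localization machinery, while (i) is essentially formal. For (i), under $|D{:}C|<\infty$, I would use any left transversal $T$ for $\tilde C$ in $\tilde D$ to exhibit
\[
\tilde R * \tilde D \;=\; \bigoplus_{t\in T}\bar t\,(\tilde R * \tilde C)
\]
as a finitely generated free right $\tilde R * \tilde C$-module of rank $|D{:}C|$. The equality $K_{\tilde R * \tilde D}(M)=K_{\tilde R * \tilde C}(M)$ then follows from the general fact that Krull dimension is preserved under restriction along a finite free extension of scalars (see \cite[Ch.~6]{McRo}).

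\textbf{Part (ii).} Under the finite-generation hypotheses, $\tilde R = RA/P$ is a finitely generated commutative Noetherian domain of classical Krull dimension $\dim\tilde R$, while $\tilde D = D/A$ is finitely generated, torsion-free (because $A$ is isolated) and nilpotent, hence polycyclic. I would then apply the classical formula of McConnell for Krull dimensions of crossed products of Noetherian rings by polycyclic-by-finite groups to obtain $K(\tilde R * \tilde D)=\dim\tilde R + h(\tilde D)=\rho$. The $\rho$-critical property follows from (iii): if $I$ is a nonzero right ideal, then $I$ contains some nonzero $\tilde a$ and therefore, by (iii), a nonzero element $\tilde c\in\tilde R * \tilde C$; inspecting the right $\tilde R * \tilde C$-module structure of $(\tilde R * \tilde D)/\tilde c(\tilde R * \tilde D)$ and using (i) on the finite-rank components yields $K((\tilde R * \tilde D)/I)\le K((\tilde R * \tilde D)/\tilde c(\tilde R * \tilde D))<\rho$.

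\textbf{Part (iii).} For the heart of the lemma, I would first reduce to a finite-index situation inside a finitely generated subgroup. The support of $\tilde a$ lies in a finitely generated subgroup $\tilde H\le\tilde D$; set $\tilde K=\tilde H\cap\tilde C$. Density of $C$ in $D$ forces every element of $\tilde H$ to have a power in $\tilde K$, and since $\tilde H$ is finitely generated nilpotent, the quotient $\tilde H/\tilde K$ is a finitely generated nilpotent torsion group, therefore finite, of some order $n$. Consequently $\tilde R * \tilde H$ is a free right $\tilde R * \tilde K$-module of rank $n$, and it embeds in the domain $\tilde R * \tilde D$ (a domain because $\tilde R$ is a commutative domain and $\tilde D$ is torsion-free nilpotent, via the classical zero-divisor results for crossed products over torsion-free polycyclic groups, extended by directed unions). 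The subring $\tilde R * \tilde K$ is itself an Ore domain by the same considerations, so it admits a skew field of fractions $Q$. The $n+1$ elements $1,\tilde a,\tilde a^2,\dots,\tilde a^n$ of the $n$-dimensional $Q$-vector space $Q\otimes_{\tilde R * \tilde K}(\tilde R * \tilde H)$ are $Q$-linearly dependent; clearing a common right denominator using the right Ore property of $\tilde R * \tilde K$ yields
\[
\sum_{i=0}^{n}\tilde a^{i}s_{i}=0,\qquad s_{i}\in\tilde R * \tilde K,\text{ not all zero.}
\]
Choosing $k$ minimal with $s_k\ne 0$ and cancelling the nonzero left factor $\tilde a^{k}$ in the domain $\tilde R * \tilde D$, I obtain $s_k+\tilde a\bigl(s_{k+1}+\tilde a s_{k+2}+\cdots+\tilde a^{\,n-k-1}s_n\bigr)=0$, so taking $\tilde b=-\bigl(s_{k+1}+\tilde a s_{k+2}+\cdots+\tilde a^{\,n-k-1}s_n\bigr)\in\tilde R * \tilde H$ gives $\tilde a\tilde b=s_k\in\tilde R * \tilde K\setminus\{0\}\subseteq\tilde R * \tilde C$, as required.

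\textbf{Part (iv) and main obstacle.} For (iv), the Ore domain $\tilde R * \tilde C$ has skew field of fractions $Q_{C}$. Given $x\in\tilde R * \tilde D$ and $s\in\tilde R * \tilde C\setminus\{0\}$, I would apply (iii) to $x$ to produce $xb=c\in\tilde R * \tilde C\setminus\{0\}$, invoke the right Ore condition in $\tilde R * \tilde C$ to obtain $c^{*},s^{*}\in\tilde R * \tilde C\setminus\{0\}$ with $cs^{*}=sc^{*}$, and apply (iii) once more to $bs^{*}$ to get $bs^{*}b'=c'\in\tilde R * \tilde C\setminus\{0\}$; then $x\cdot c'=x\cdot bs^{*}b'=sc^{*}b'=s\cdot(c^{*}b')$, verifying the right Ore condition with $s_{1}=c'\in\tilde R * \tilde C$ and $x_{1}=c^{*}b'\in\tilde R * \tilde D$. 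Since, by (iii), every nonzero element of $\tilde R * \tilde D$ becomes a unit in the resulting localization, this coincides with the classical right ring of quotients of $\tilde R * \tilde D$, and the decomposition $\bigoplus_{t\in T}\bar t\cdot Q_{C}$ is inherited from the right $\tilde R * \tilde C$-module structure $\tilde R * \tilde D=\bigoplus_{t}\bar t(\tilde R * \tilde C)$. The main obstacle throughout is (iii): confirming that $\tilde R * \tilde D$ is a domain under the stated hypotheses (so that the left cancellation of $\tilde a^{k}$ is legitimate) and carrying the Cayley--Hamilton-style linear-dependence argument cleanly through the noncommutative crossed product via the Ore property of the base $\tilde R * \tilde K$.
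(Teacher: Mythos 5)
Your overall strategy is genuinely different from the paper's, and for part (iii) it is quite nice. The paper proves (ii) first by induction on a subnormal cyclic series, invoking Roseblade's Corollary A and Segal's Lemma 10 for the inductive step, and then deduces (iii) from (ii) and (i) by a Krull-dimension contradiction: if $\tilde a\tilde R*\tilde D\cap\tilde R*\tilde C=0$ then $\tilde R*\tilde D/\tilde a\tilde R*\tilde D$ contains a copy of $\tilde R*\tilde C$, contradicting $\rho$-criticality. Your direct proof of (iii) --- embed $\tilde a$ in the subring $\tilde R*\tilde H$ with $\tilde H$ finitely generated, observe that $[\tilde H:\tilde K]=n<\infty$ by density and finite generation, form the skew field $Q$ of $\tilde R*\tilde K$, place the powers of $\tilde a$ in the $n$-dimensional right $Q$-space $(\tilde R*\tilde H)Q$, and cancel $\tilde a^k$ in the domain $\tilde R*\tilde H$ --- is correct, more elementary, and has the additional advantage of not needing the reduction to a finitely generated coefficient domain that the paper performs via Lemma 2.2.3. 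Your argument for (iv) via the right Ore condition and the observation that (iii) makes every nonzero element right (hence, in a domain, two-sided) invertible in $\tilde R*\tilde D(\tilde R*\tilde C)^{-1}$ is also a clean shortcut compared to the paper's subnormal-series induction.

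However, your proof of the criticality claim in (ii) has a genuine gap. You reduce to showing $K_{\tilde R*\tilde D}\bigl((\tilde R*\tilde D)/\tilde c(\tilde R*\tilde D)\bigr)<\rho$ for $0\ne\tilde c\in\tilde R*\tilde C$, and propose to do this by decomposing $(\tilde R*\tilde D)/\tilde c(\tilde R*\tilde D)$ along a transversal and using (i). But after decomposing, each summand is (a twist of) $(\tilde R*\tilde C)/\tilde c(\tilde R*\tilde C)$, and the bound you need --- that its $\tilde R*\tilde C$-Krull dimension is strictly less than $\rho$ --- is precisely the statement that $\tilde R*\tilde C$ is $\rho$-critical. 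That is not established independently; since $h(\tilde C)=h(\tilde D)$ when $[D:C]<\infty$, it is in fact the same assertion at the subring level. So the argument is circular as written. (There is also a subsidiary issue: the twist requires $\bar t^{-1}(\tilde R*\tilde C)\bar t=\tilde R*\tilde C$, i.e.\ $C$ normal, which you would first have to arrange by passing to a normal subgroup of finite index.) To close this gap you either need an independent criticality input --- as the paper obtains by induction on Hirsch length via Segal's Lemma 10 --- or you need to run your own induction on $h(\tilde D)$ explicitly, proving criticality of $\tilde R*\tilde D_1$ first for a normal $\tilde D_1$ with $\tilde D/\tilde D_1$ infinite cyclic. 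As it stands, the ``McConnell formula'' gives you the Krull dimension $\rho$ but not the critical property, and (iii) alone does not supply it.
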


	\begin{proof} (i) Since any subgroup of the nilpotent group $D$ is subnormal, it is sufficient to consider the case where $C$ is a normal subgroup of $D$. In this case we can repeat the arguments of the proof of \cite[Lemma 8]{Sega1977}. \par
(ii). Since the group $D$ is finitely generated nilpotent, the subgroup $A$ is also finitely generated. Then, as the ring $R$ is finitely generated, we see that $\tilde R$ is a finitely generated commutative domain. Therefore, $\tilde R$ is a commutative Noetherian ring and hence, by \cite[Corollary 6.4.8]{McRo}, $\tilde R$ has Krull dimension ${K_{\tilde R}}(\tilde R) = \dim \tilde R = \mu $. Since $0$ is a prime ideal of $\tilde R$, we see that ${K_{\tilde R}}(\tilde R/I) = \dim (\tilde R/I) < \dim (\tilde R) = {K_{\tilde R}}(\tilde R) = \mu $ for any nonzero ideal $I$ of $\tilde R$. Therefore, $\tilde R$ is $\mu $-critical.\par
	Since the subgroup $A$ is isolated, the group $\tilde D$ is finitely generate nilpotent and torsion-free. Therefore, $\tilde D$ has a finite series each of whose factor is infinite cyclic. Then it is not difficult to show that the ring $\tilde R * \tilde D$ has a series of subrings $\tilde R = {S_0} \le {S_1} \le ... \le {S_n} = \tilde R * \tilde D$ such that ${S_i} = {S_{i - 1}} * \left\langle {{g_i}} \right\rangle $, where $\left\langle {{g_i}} \right\rangle $ is an infinite cyclic group and $1 \le i \le n$. It follows from \cite[Corollary A]{Rose1973} that any simple ${S_i}$-module is finite and hence it is an Artenian ${S_{i - 1}}$-module. The proof is by induction on $n$. If $n = 0$ then $.{S_0} = \tilde R$ is $\mu $-critical. Suppose that $.{S_{n - 1}}$ is $\rho $-critical then, by \cite[Lemma 10]{Sega1977}, $ {S_n}$ is $\rho  + 1$-critical and the assertion follows. \par
(iii) Let $a$ be a preimage of $\tilde a$ in $RD$ then the element $a$ can be presented in the form $a = \sum\nolimits_{i = 1}^n {{r_i}} {d_i}$, where ${r_i} \in R$ and ${d_i} \in D$. Let ${R_1}$ be a subring of $R$ generated by elements $\{ {r_i}|1 \le i \le n\} $ and ${D_1}$ be a subgroup of $D$ generated by elements $\{ {d_i}|1 \le i \le n\} $. Put ${C_1} = {D_1} \cap C$, ${A_1} = {D_1} \cap A$,  ${P_1} = {R_1}{A_1} \cap P$, ${\tilde C_1} = {C_1}/{A_1}$, ${\tilde D_1} = {D_1}/{A_1}$, and ${\tilde R_1} = {R_1}{A_1}/{P_1}$. By Lemma 2.2.3, ${P_1}{R_1}{D_1} = ({R_1}{A_1} \cap P){R_1}{D_1} = {R_1}{D_1} \cap PRD$ and hence $({R_1}{D_1} + PRD)/PRD \cong {R_1}{D_1}/({R_1}{D_1} \cap PRD) = {R_1}{D_1}/{P_1}{R_1}{D_1}$. Then, as $a \in {R_1}{D_1}$, we can assume that $\tilde a \in {R_1}{D_1}/{P_1}{R_1}{D_1} = {\tilde R_1} * {\tilde D_1}$ and hence it is sufficient to show that for any nonzero element $\tilde a \in {\tilde R_1} * \tilde D$ there is an element $0 \ne \tilde b \in {\tilde R_1} * \tilde D$ such that $0 \ne \tilde a\tilde b = \tilde c \in {\tilde R_1} * {\tilde C_1}$. Thus, to simplify the denotations, we can assume that the domain $R$ and the group $D$ are finitely generated. Then, as $C$ is a dense subgroup of the finitely generated nilpotent group $D$, it is not difficult to show that $\left| {D:C} \right| < \infty $ and hence $\left| {\tilde D:\tilde C} \right| < \infty $.\par
Let $0 \ne \tilde a \in \tilde R * \tilde D$ then it is sufficient to show that $\tilde a\tilde R * \tilde D \cap \tilde R * \tilde C \ne 0$. Suppose that $\tilde a\tilde R * \tilde D \cap \tilde R * \tilde C = 0$. Since $h(\tilde D) = h(\tilde C)$, it follows from (ii) that ${K_{\tilde R * \tilde D}}(\tilde R * \tilde D) = \dim \tilde R + h(D) = \dim \tilde R + h(C) = {K_{\tilde R * \tilde C}}(\tilde R * \tilde C) = \rho $, besides, the rings $\tilde R * \tilde D$ and $\tilde R * \tilde C$ are $\rho $-critical. Suppose that $\tilde a\tilde R * \tilde D \cap \tilde R * \tilde C = 0$ then $\tilde R * \tilde D/\tilde a\tilde R * \tilde D$ contains an $\tilde R * \tilde C$-submodule which is isomorphic to $\tilde R * \tilde C$ and hence ${K_{\tilde R * \tilde C}}(\tilde R * \tilde D/\tilde a\tilde R * \tilde D) = \rho $. On the other hand, as the ring $\tilde R * \tilde D$ is $\rho $-critical, we see that ${K_{\tilde R * \tilde D}}(\tilde R * \tilde D/\tilde a\tilde R * \tilde D) < \rho $. However, by (i), ${K_{\tilde R * \tilde C}}(\tilde R * \tilde D/\tilde a\tilde R * \tilde D) = {K_{\tilde R * \tilde D}}(\tilde R * \tilde D/\tilde a\tilde R * \tilde D)$ and the obtained contradiction shows that $\tilde a\tilde R * \tilde D \cap \tilde R * \tilde C \ne 0$. \par
	(iv) Since any subgroup of the nilpotent group $D$ is subnormal, there is a series $C = {D_0} \le {D_1} \le ... \le {D_n} = D$ and, by \cite[Corollary 37.11]{Pass1989}, each crossed product $\tilde R * {\tilde D_i}$ is an Ore domain, where ${\tilde D_i} = {D_i}/A$. Then, by Proposition 4.1.1(i), there is a right ring of quotients $\tilde R * \tilde D{(\tilde R * \tilde D)^{ - 1}}$ and it is not difficult to show that the right partial ring of quotients $\tilde R * \tilde D{(\tilde R * {\tilde D_{n - 1}})^{ - 1}}$ is a subring of $\tilde R * \tilde D{(\tilde R * \tilde D)^{ - 1}}$. Each element $x \in \tilde R * \tilde D{(\tilde R * \tilde D)^{ - 1}}$ can be presented in the form $x = a{b^{ - 1}}$, where $a,b \in \tilde R * \tilde D$. By (iii), there is an element $0 \ne c \in \tilde R * \tilde D$ such that $0 \ne bc = d \in \tilde R * \tilde C$. Then $bc{d^{ - 1}} = 1$ and hence ${b^{ - 1}} = c{d^{ - 1}}$.Thus, each element  $x \in \tilde R * \tilde D{(\tilde R * \tilde D)^{ - 1}}$ can be presented in the form $x = ac{d^{ - 1}}$, where $ac \in \tilde R * \tilde D$ and $d \in \tilde R * {\tilde D_{n - 1}}$, and we can conclude that $\tilde R * \tilde D{(\tilde R * \tilde D)^{ - 1}} = \tilde R * \tilde D{(\tilde R * {\tilde D_{n - 1}})^{ - 1}} = \tilde R * {\tilde D_{n - 1}}{(\tilde R * {\tilde D_{n - 1}})^{ - 1}}{ \otimes _{\tilde R * {{\tilde D}_{n - 1}}}}\tilde R * \tilde D$. By the inductive hypothesis $\tilde R * {\tilde D_{n - 1}}{(\tilde R * {\tilde D_{n - 1}})^{ - 1}} = \tilde R * \tilde C{(\tilde R * \tilde C)^{ - 1}}{ \otimes _{\tilde R * \tilde C}}\tilde R * {\tilde D_{n - 1}}$ and the arguments of \cite[Chap. 2, Lemma 2.1]{Karp} shows that $\tilde R * \tilde D{(\tilde R * \tilde D)^{ - 1}} = \tilde R * \tilde C{(\tilde R * \tilde C)^{ - 1}}{ \otimes _{\tilde R * \tilde C}}\tilde R * \tilde D$.\par
\end{proof}

	\begin{lemma}\label{Lemma 4.1.3} Let $G$ be a group which has a series $A \le C \le D \le G$  such that $D$ and $A$ are normal subgroups, the subgroup $D$ is nilpotent, the subgroup $A$ is abelian, the subgroup $C$ is dense in $D$, the quotient group $D/A$ is torsion-free and the quotient group $G/D$ polycyclic-by-finite. Let $R$ be a commutative domain and suppose that the group ring $RA$ has a $G$-invariant prime ideal $P$. Let $\tilde R * \tilde G = RG/PRG$, $\tilde R * \tilde D = RD/PRD$ and $\tilde R * \tilde C = RC/PRC$, where $\tilde R = RA/P$, $\tilde G = G/A$,$\tilde D = D/A$ and $\tilde C = C/A$. Then there exists a Noetherian partial right ring of quotients $\tilde R * \tilde G{(\tilde R * \tilde D)^{ - 1}}$ = $\tilde R * \tilde C{(\tilde R * \tilde C)^{ - 1}}{ \otimes _{\tilde R * \tilde C}}\tilde R * \tilde G = { \oplus _{t \in T}}(\tilde R * \tilde C{(\tilde R * \tilde C)^{ - 1}})\bar t$, where $T$ is a right transversal for $\tilde C$ in $\tilde G$.
\end{lemma}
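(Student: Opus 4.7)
The plan is to bootstrap the construction in two stages: first invoking Lemma 4.1.2 applied to the nilpotent subgroup $D$, then extending along the polycyclic-by-finite quotient $G/D$ via Proposition 4.1.1. All required hypotheses are in place: $A$ is abelian and normal in $D$; $A$ is isolated in $D$ because $D/A$ is torsion-free; $C$ is dense in $D$ by assumption; and the ideal $P$, being $G$-invariant, is in particular $D$-invariant, so that the crossed product descriptions $\tilde R * \tilde G = RG/PRG$, $\tilde R * \tilde D = RD/PRD$, $\tilde R * \tilde C = RC/PRC$ are well defined.

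Applying Lemma 4.1.2(iv) to the triple $A \le C \le D$ produces the partial right ring of quotients
\[
\tilde R * \tilde D\,(\tilde R * \tilde D)^{-1} \;=\; \tilde R * \tilde C\,(\tilde R * \tilde C)^{-1} \otimes_{\tilde R * \tilde C} \tilde R * \tilde D \;=\; \bigoplus_{t_1 \in T_1}(\tilde R * \tilde C\,(\tilde R * \tilde C)^{-1})\,\bar{t}_1,
\]
where $T_1$ is a right transversal for $\tilde C$ in $\tilde D$. In particular, $\tilde R * \tilde D$ is an Ore domain whose classical ring of fractions is a division ring. Since $D$ is normal in $G$, $\tilde D$ is normal in $\tilde G$, and $\tilde G/\tilde D \cong G/D$ is polycyclic-by-finite. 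Proposition 4.1.1 therefore applies to the crossed product $\tilde R * \tilde G$ with the normal Ore-domain subring $\tilde R * \tilde D$: part (i) delivers
\[
\tilde R * \tilde G\,(\tilde R * \tilde D)^{-1} \;=\; (\tilde R * \tilde D\,(\tilde R * \tilde D)^{-1}) \otimes_{\tilde R * \tilde D} \tilde R * \tilde G \;=\; \bigoplus_{s \in S}(\tilde R * \tilde D\,(\tilde R * \tilde D)^{-1})\,\bar s,
\]
where $S$ is a right transversal for $\tilde D$ in $\tilde G$, while part (ii) ensures that this ring is Noetherian because $\tilde G/\tilde D$ is polycyclic-by-finite.

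To finish, I splice the two decompositions. Substituting the Lemma 4.1.2(iv) identity into the Proposition 4.1.1 formula and using associativity of the tensor product together with $\tilde R * \tilde D \otimes_{\tilde R * \tilde D} \tilde R * \tilde G = \tilde R * \tilde G$ yields
\[
\tilde R * \tilde G\,(\tilde R * \tilde D)^{-1} \;=\; \tilde R * \tilde C\,(\tilde R * \tilde C)^{-1} \otimes_{\tilde R * \tilde C} \tilde R * \tilde G.
\]
The explicit direct sum decomposition follows by taking $T = \{\,t_1 s : t_1 \in T_1,\; s \in S\,\}$, which is a right transversal for $\tilde C$ in $\tilde G$. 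There is no substantive obstacle in this scheme; the only care required is to verify that the Ore-domain hypothesis in Proposition 4.1.1 is genuinely supplied by Lemma 4.1.2(iv), and that the two tensor-product stages glue correctly over the nested scalar ring inclusion $\tilde R * \tilde C \subseteq \tilde R * \tilde D$.
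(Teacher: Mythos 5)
Your proof is correct and follows essentially the same route as the paper: cite Lemma 4.1.2(iv) for the $\tilde D$-level decomposition over $\tilde R*\tilde C$, invoke Proposition 4.1.1(i),(ii) for the existence and Noetherian property of $\tilde R*\tilde G(\tilde R*\tilde D)^{-1}$, and splice the two tensor-product identities via associativity. The paper's proof is terser (it cites the two ingredients without explicitly writing out the associativity step or the combined transversal), but the argument is the same; your explicit verification that $A$ is isolated in $D$, that $\tilde R*\tilde D$ is an Ore domain, and that the transversals compose is just a careful filling-in of the paper's ellipses.
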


	\begin{proof} By Proposition 4.1.1(i, ii), there exists a Noetherian partial right ring of quotients $\tilde R * \tilde G{(\tilde R * \tilde D)^{ - 1}}$ and by Lemma 4.1.2(iv), $\tilde R * \tilde G{(\tilde R * \tilde D)^{ - 1}}$ = $\tilde R * \tilde C{(\tilde R * \tilde C)^{ - 1}}{ \otimes _{\tilde R * \tilde C}}\tilde R * \tilde G = { \oplus _{t \in T}}(\tilde R * \tilde C{(\tilde R * \tilde C)^{ - 1}})\bar t$, where $T$ is a right transversal for $C$ in $G$.   
\end{proof}

	\begin{lemma}\label{Lemma 4.1.4} Let $G$ be a group, $R$ be a ring and $R * G$ be a crossed product. Let $Q$ be a ring  such that $R * G$ is a subring of $Q$ and there is a subring $U$ of $Q$ such that $R \le U$ and $Q = U{ \otimes _{R * H}}R * G = { \oplus _{t \in T}}Ut$, where $H$ is a subgroup of  $G$ such that $R * H = R * G \cap U$ and $T$ is a right transversal for $H$ in $G$. Let $M$ be a $Q$-module, let $0 \ne a \in M$ and let $J = an{n_Q}(a)$. Then $aQ = aU{ \otimes _{R * H}}R * G = { \oplus _{t \in T}}aUt$,where $T$ is a right transversal for $H$ in $G$, if and only if $J = (J \cap U)R * G = (an{n_U}(a))R * G$. 
\end{lemma}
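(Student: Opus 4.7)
The plan is to analyze the natural surjection $\pi\colon Q \to aQ$ given by $\pi(q) = aq$, whose kernel is exactly $J = \mathrm{ann}_Q(a)$. Since the hypothesis gives $Q = \oplus_{t \in T} Ut$, applying $\pi$ yields $aQ = \sum_{t \in T} aUt$ automatically, so the content of the claimed identity $aQ = aU \otimes_{R*H} R*G = \oplus_{t \in T} aUt$ reduces to the assertion that this internal sum is direct in $aQ$. On the abstract side, since $R*G = \oplus_t (R*H)t$ is free as a left $R*H$-module (which one obtains by intersecting $Q = \oplus_t Ut$ with $R*G$ and using the assumption $R*H = R*G \cap U$), the tensor product $aU \otimes_{R*H} R*G$ is canonically identified with the external sum $\oplus_t aUt$, and the natural map from it to $aQ$ is surjective; what needs checking is exactly its injectivity.

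First I would record that $J \cap U = \mathrm{ann}_U(a)$ is immediate, and that $J \cap U$ is a right ideal of $U$, hence in particular a right $R*H$-submodule. Using the decomposition $R*G = \oplus_t (R*H)t$ this gives $(J \cap U)R*G = \sum_t (J \cap U)t$ inside $Q$, and since each summand lies in $Ut$ and $Q = \oplus_t Ut$, this sum is automatically direct, so $(J \cap U)R*G = \oplus_t (J \cap U)t$. The reverse inclusion $(J \cap U)R*G \subseteq J$ is also automatic, because $J \cap U \subseteq J$ and $J$ is a right ideal of $Q$.

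With these preparations the heart of the argument is a single translation. Writing an arbitrary element of $Q$ uniquely as $\sum_t u_t t$ with $u_t \in U$, directness of $\sum_t aUt$ inside $aQ$ is equivalent to the statement that $\sum_t au_t t = 0$ forces each $au_t = 0$, i.e.\ each $u_t \in J \cap U$. This says precisely $J \subseteq \oplus_t (J \cap U)t = (J \cap U)R*G$, which combined with the reverse inclusion yields the desired equality $J = (J \cap U)R*G = (\mathrm{ann}_U(a))R*G$. There is no genuine obstacle beyond careful bookkeeping; the one point worth underscoring is that each $t \in T$ is a unit in the crossed product $R*G$, and hence in $Q$, so right multiplication by $t$ preserves annihilation and gives $aut = 0 \iff au = 0$, which is exactly what licenses the passage between "$u_t$ annihilates $a$" and "$u_t t$ contributes to $J$" in the bijection above.
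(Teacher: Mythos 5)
Your proof is correct and takes essentially the same approach as the paper: in one direction you decompose an element of $J$ along $Q = \oplus_{t\in T} Ut$ and use directness of $\oplus_t aUt$ (plus invertibility of each $t$) to put it in $(J\cap U)R*G$, and in the other direction you use $J = (J\cap U)R*G$ to show that any relation $\sum_t au_t t = 0$ forces each $u_t \in J\cap U$. Your packaging via the surjection $\pi\colon Q \to aQ$ and the explicit remark that each $t$ is a unit are minor refinements of the paper's argument, not a different route.
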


          \begin{proof} Suppose that $aQ = aU{ \otimes _{R * H}}R * G = { \oplus _{t \in T}}aUt$, where $T$ is a right transversal for $H$ in $G$, and let $x \in an{n_Q}(a)$. Then we can present the element $x$ in the form $x = \sum\nolimits_{i = 1}^n {{x_i}} {t_i}$, where ${x_i} \in U$ and ${t_i} \in T$ for some . Since $ax = \sum\nolimits_{i = 1}^n {a{x_i}} {t_i} = 0$ and the sum $aQ = { \oplus _{t \in T}}aUt$ is direct, we can conclude that $a{x_i} = 0$ for each $i$. Therefore, $J \subseteq (J \cap U)R * G$ and, evidently, $(J \cap U)R * G \subseteq J$. So, we can conclude that $J = (J \cap U)R * G = (an{n_U}(a))R * G$. \par
Suppose now that $J = (J \cap U)R * G$ and the sum $aQ = \sum\nolimits_{t \in T} {aUt} $ is not direct, where $T$ is a right transversal for $H$ in $G$. Then there are ${x_i} \in U$ and ${t_i} \in T$ such that $a{x_i}{t_i} \ne 0$ for each $i$ but $\sum\nolimits_{i = 1}^n {a{x_i}} {t_i} = 0$ for some . Therefore, $\sum\nolimits_{i = 1}^n {{x_i}} {t_i} \in J$ and, as $J = (J \cap U)R * G$, we can conclude that each ${x_i} \in J \cap U = an{n_U}(a)$. It implies that $a{x_i}{t_i} = 0$ for each $i$ but it leads to a contradiction. So, we can conclude that $aQ = aU{ \otimes _{R * H}}R * G = { \oplus _{t \in T}}aUt$.  
\end{proof}

\begin{theorem}\label{Theorem 4.1.5} Let $G$ be a nilpotent $FART$-group and let $D$ be a normal subgroup of $G$ such that the quotient group $G/D$ is polycyclic. Let $k$ be a finitely generated field such that $char \, k \notin Sp(G)$ and let $M$ be a faithful $kG$-module. Suppose that the subgroup $D$ contains an isolated in $D$ abelian $G$-invariant subgroup $A$ such that  $P = Ann_{kA}(M)$ is a maximal $G$-invariant faithful ideal of $kA$. 
If the module $M$ is ${{kD}  /{PkD}}$-torsion-free then for any nonzero element $0 \ne a \in M$ there is a finitely generated subgroup $H \le G$ such that $akG = akH{ \otimes _{kH}}kG$. 
\end{theorem}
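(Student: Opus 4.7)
The plan is to pass to the crossed product $\tilde{R}*\tilde{G} = kG/PkG$, where $\tilde{R} = kA/P$ is a commutative domain (the maximal $G$-invariant faithful ideal $P$ being prime) and $\tilde{G} = G/A$; then localize using the $\tilde{R}*\tilde{D}$-torsion-freeness of $M$ to land in a Noetherian ring of quotients, and finally descend finite generation of the annihilator there to control by a finitely generated subgroup $H \leq G$. The key tools are Lemma~4.1.3 (for the Noetherian ring of quotients), Theorem~3.2.1(i) (for control of $P$ by a finitely generated subgroup of $A$), and Lemma~4.1.4 (for the final descent).

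First I would pick a finitely generated dense subgroup $C_1 \leq D$ (available since $D$ is nilpotent of finite rank) and set $C = AC_1$, so that $C$ is dense in $D$ with $\tilde{C} = C/A$ finitely generated. Lemma~4.1.3 then supplies a Noetherian partial right ring of quotients
\[
Q = \tilde{R}*\tilde{G}(\tilde{R}*\tilde{D})^{-1} = \bigoplus_{t \in T} \bigl(\tilde{R}*\tilde{C}(\tilde{R}*\tilde{C})^{-1}\bigr)\bar{t},
\]
where $T$ is a transversal for $\tilde{C}$ in $\tilde{G}$. Proposition~4.1.1(iii), applied to the cyclic $\tilde{R}*\tilde{D}$-torsion-free submodule $akG \leq M$, embeds $akG$ into the cyclic $Q$-module $W := aQ$, and Noetherianity of $Q$ forces the annihilator $J := \mathrm{Ann}_Q(a)$ to be a finitely generated right ideal.

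Each of the finitely many generators of $J$ has finite support in the direct sum decomposition above: only finitely many transversal elements $\bar{t}_i \in G$ appear, each with a coefficient of the form $rs^{-1}$ ($r, s \in \tilde{R}*\tilde{C}$) involving only finitely many elements of $C$ and, after lifting to $kA$, finitely many elements of $A$ as $\tilde{R}$-coefficients. By Theorem~3.2.1(i) applied to the abelian group $A$ of finite total rank (with $\mathrm{char}\,k \notin Sp(A) \subseteq Sp(G)$), there is a finitely generated subgroup $A_0 \leq A$ with $P = (P \cap kA_0)kA$. Enlarge $A_0$ to include all the needed $A$-representatives, then form the finitely generated subgroup $H := \langle A_0, F_C, F_G \rangle$, where $F_C \subseteq C$ and $F_G \subseteq G$ collect the finitely many $C$- and transversal elements appearing in the generators of $J$; replacing $A_0$ by $A \cap H$ if necessary (still finitely generated, as $H$ is finitely generated nilpotent), we may assume $A \cap H = A_0$. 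By construction, every generator of $J$ lies in the image in $Q$ of a sub-crossed-product attached to $H$.

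The final step is to deduce $akG = akH \otimes_{kH} kG$, equivalently $I := \mathrm{Ann}_{kG}(a) = (I \cap kH)kG$, from the above. This splits into two ingredients: since $A_0 \leq H$ and $P = (P \cap kA_0)kA$, we have $PkG = (P \cap kA_0)kG \subseteq (I \cap kH)kG$, absorbing the kernel of $kG \to \tilde{R}*\tilde{G}$; and the construction of $H$ ensures the control identity $J = (J \cap U)\,\tilde{R}*\tilde{G}$ in $Q$ for a suitable sub-localization $U \subseteq Q$ built from $H$, which via Lemma~4.1.4 and the embedding $\tilde{R}*\tilde{G} \hookrightarrow Q$ (so that $\mathrm{Ann}_{\tilde{R}*\tilde{G}}(a) = J \cap \tilde{R}*\tilde{G}$) pulls back to control of $I$ by $kH$ inside $kG$. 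The main obstacle is this last bookkeeping: one must carefully propagate the control statement through the successive embeddings $kG \hookrightarrow \tilde{R}*\tilde{G} \hookrightarrow Q$, verifying that inverting the Ore set of nonzero elements of $\tilde{R}*\tilde{D}$ does not destroy the finitely generated witness and that the tensor product decompositions track correctly under the three inclusions.
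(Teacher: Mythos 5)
Your high-level strategy matches the paper's: pass to the crossed product $\tilde{R}*\tilde{G} = kG/PkG$, invert the Ore set of nonzero elements of $\tilde{R}*\tilde{D}$ to land in a Noetherian ring $Q$, use finite generation of $J = \mathrm{Ann}_Q(a)$ to find a "small" controller, and then descend via Lemma~4.1.4. You also correctly identify Lemma~4.1.3, Proposition~4.1.1(iii), Theorem~3.2.1, and Lemma~4.1.4 as the key tools. However, there is a genuine gap in the final descent, and it is precisely the step you flag as ``the main obstacle'' without resolving.

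The issue is the following. When you apply Lemma~4.1.4 inside $Q$, the control statement it yields lives at the level of $\tilde{R}*\tilde{G}$, with the controlling subring being built from a subgroup $\tilde{L} \leq \tilde{G}$ (the image of your $H$ in $\tilde{G}$). Pulling this back to $kG$, you obtain control of $I = \mathrm{Ann}_{kG}(a)$ by $k(HA)$, \emph{not} by $kH$, because the preimage of $\tilde{L} = HA/A$ in $G$ is $HA$, which contains all of $A$ and is therefore not finitely generated in general. Your proposal tries to sidestep this by baking a finitely generated $A_0 \leq A$ into $H$ from the outset via Theorem~3.2.1(i), but the sub-localization $U \subseteq Q$ that Lemma~4.1.4 would need is determined by $\tilde{L}$, not by $H$ itself; the finitely generated $H$ that contains only $A_0$ does not directly furnish the tensor-product decomposition $Q = \bigoplus_t U\bar{t}$ that the lemma requires, since that decomposition is organized relative to a subgroup of $\tilde{G} = G/A$. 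There is also a secondary issue: your $C$ (and hence $\tilde{C}$) is fixed \emph{before} the generators of $J$ are produced, whereas the decomposition $Q = \bigoplus_{t}(\tilde{R}*\tilde{L}(\tilde{R}*\tilde{C})^{-1})\bar{t}$ that makes the argument work requires choosing $\tilde{L}$ to contain the supports of those generators and then setting $\tilde{C} = \tilde{D}\cap\tilde{L}$.

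The paper resolves this by a two-stage descent. First, it applies Lemma~4.1.4 to obtain $akG = akL\otimes_{kL}kG$, where $L$ is the (typically non-finitely-generated) preimage of $\tilde{L}$ in $G$, so $A \leq L$. Second, it works inside $kL$: since $P$ is maximal, $\tilde{R}$ is a field; since $\tilde{L}$ is finitely generated nilpotent, it is polycyclic, so $\tilde{R}*\tilde{L}$ is Noetherian and $\tilde{I} = I/P$ is a finitely generated ideal; combining this with finite generation of $P$ (Theorem~3.2.1(ii)) gives that $I = \mathrm{Ann}_{kL}(a)$ is a finitely generated ideal of $kL$, whence there is a finitely generated $H \leq L$ with $I = (I\cap kH)kL$. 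A second application of Lemma~4.1.4 then yields $akL = akH\otimes_{kH}kL$, and composing with the first stage finishes the proof. This second stage is exactly what is missing from your write-up: you need a separate argument for the descent from $kL$ (or $k(HA)$) to a finitely generated $kH$, and the mechanism is finite generation of the annihilator in the Noetherian ring $kL/PkL$, not the original choice of $A_0$ and $F_C, F_G$.
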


\begin{proof} Let $\tilde R = {{kA} /P}$, $\tilde G = {G / A}$ and $\bar D = {D /A}$. Then ${{kG} /{PkG}} = \tilde R * \tilde G$ is a crossed product of a commutative domain $\tilde R$ and a nilpotent group $\tilde G$. Evidently, ${{kD} /{PkD}} = \tilde R * \tilde D$ is a subring of $\tilde R * \tilde G$. Since $MP = 0$, we can consider $M$ as an $\tilde R * \tilde G$-module. By Lemma 4.1.3, there exists a Noetherian partial right ring of quotients $\tilde R * \tilde G{(\tilde R * \tilde D)^{ - 1}}$ and , by Proposition 4.1.1(iii), there exists a cyclic $R * G{(R*D)^{ - 1}}$-module $W = aR * G{(R*D)^{ - 1}}$ such that $aR * G \le W$ and $W = \left\{ {m{s^{ - 1}}|m \in aR * G,s \in R * D} \right\}$. \par
Since the ring $\tilde R * \tilde G{(\tilde R * \tilde D)^{ - 1}}$ is Noetherian, the annihilator $J = an{n_{\tilde R * \tilde G{{(\tilde R * \tilde D)}^{ - 1}}}}(a)$ is a finitely generated right ideal of $Q$. Let $X = \{ {x_i}|1 \le i \le n\} $ be a finite set of generators of $J = an{n_Q}(a)$ then each ${x_i}$ can be presented in the form ${x_i} = {d_i}{({c_i})^{ - 1}}$, where ${d_i} \in \tilde R * \tilde G$ and ${c_i} \in \tilde R * \tilde D$. Let $Y =  \cup _{i = 1}^n({\mathop{\rm supp}\nolimits} ({d_i}) \cup {\mathop{\rm supp}\nolimits} ({c_i}))$ then, as $G$ is a nilpotent $FART$-group, we can choose a finitely generated dense subgroup $\tilde L$ of $ \tilde G$ such that $Y \subseteq \ \tilde L$. It easily implies that ${d_i} \in \tilde R * \tilde L$ and ${c_i} \in \tilde R * \tilde C$ for each $i$, where $\tilde C = \tilde D \cap \tilde L$ and hence $X \subseteq R * \tilde L{(R * \tilde C)^{ - 1}}$. It follows from Lemma 4.1.3. that $\tilde R * \tilde G{(\tilde R * \tilde D)^{ - 1}}$= ${ \oplus _{t \in {T_C}}}(\tilde R * \tilde C{(\tilde R * \tilde C)^{ - 1}})\bar t$, where ${T_C}$ is a right transversal for $\tilde C$ in $\tilde G$ and it easily implies that $\tilde R * \tilde G{(\tilde R * \tilde D)^{ - 1}}$=${ \oplus _{t \in {T_L}}}(\tilde R * \tilde L{(\tilde R * \tilde C)^{ - 1}})\bar t$, where ${T_L}$ is a right transversal for $\tilde L$ in $\tilde G$  because, by Proposition 4.1.1(i), $\tilde R * \tilde L{(\tilde R * \tilde C)^{ - 1}} = { \oplus _{t \in \tilde L \cap T_C}}(\tilde R * \tilde C{(\tilde R * \tilde C)^{ - 1}})\bar t$. Therefore, $\tilde R * \tilde G{(\tilde R * \tilde D)^{ - 1}}=(\tilde R * \tilde L{(\tilde R * \tilde C)^{ - 1}})\tilde R * \tilde G$ and , as $X \subseteq \tilde R * \tilde L{(\tilde R * \tilde C)^{ - 1}}$, we can conclude that $J = an{n_Q}(a) = (J \cap (\tilde R * \tilde L{(\tilde R * C)^{ - 1}}))\tilde R * \tilde G$. \par
Then it follows from Lemma 4.1.4 that $a\tilde R * \tilde G{(\tilde R * \tilde D)^{ - 1}} = a\tilde R * \tilde L{(\tilde R * \tilde C)^{ - 1}}{ \otimes _{\tilde R * \tilde L}}\tilde R * \tilde G = { \oplus _{t \in {T_{\tilde L}}}}a\tilde R * \tilde L{(\tilde R * \tilde C)^{ - 1}}t$, where ${T_{\tilde L}}$ is a right transversal for $\tilde L$ in $\tilde G$. Since $a\tilde R * \tilde G \le a\tilde R * \tilde G{(\tilde R * \tilde D)^{ - 1}}$ and $a\tilde R * \tilde L \le a\tilde R * \tilde L{(\tilde R * \tilde C)^{ - 1}}$, we can conclude that $a\tilde R * \tilde G = a\tilde R * \tilde L{ \otimes _{\tilde R * \tilde L}}\tilde R * \tilde G = { \oplus _{t \in T}}a\tilde R * \tilde Lt$. Therefore, $akG = akL{ \otimes _{kL}}kG = { \oplus _{t \in T}}akLt$, where $L$ is the preimage of $\tilde L$ in $G$ and $T$ is a right transversal for $L$ in $G$. \par
	Let $I = an{n_{kL}}(a)$ then $P \le I$ and $\tilde I = an{n_{\tilde R * \tilde L}}(a)$, where $\tilde I = I/P$. Since the ideal $P$ is maximal, $\tilde R$ is a field and, as the nilpotent subgroup $\tilde L$ is finitely generated, $\tilde L$ is polycyclic. Then it follows from \cite[Proposition 1.6]{Pass1989} that the ring $\tilde R * \tilde L$ is Noetherian and hence the right ideal $\bar I$ is finitely generated. By Theorem 3.2.1(ii) , the ideal $P$ is finitely generated and, as $\tilde I = I/P$, we can conclude that the ideal $I$ is also finitely generated. It implies that there is a finitely generated subgroup $H$ of $L$ such that $I = (I \cap kH)kG$. Then it follows from Lemma 4.1.4 that $akL = akH{ \otimes _{kH}}kL$ and, as $akG = akL{ \otimes _{kL}}kG$, it follows from \cite[Chap. 2, Lemma 2.1]{Karp} that $akG = akH{ \otimes _{kH}}kG$. 
\end{proof}

\subsection{A set of commutative invariants for modules over group rings of finitely generated nilpotent groups} \label{Subsection 4.2}. 

Let $S$ be a commutative ring, $P$ be a prime ideal of $S$ and ${S_P}$ be the localization of the ring $S$ at the ideal $P$. Let $W$ be an $S$ -module then the support  $Supp(W)$ of the module $W$ consists of prime ideals $P \le S$ such that ${W_P} = W{ \otimes _S}{S_P} \ne 0$ (see \cite[Chap. II, \S 4.4]{Bour}). By \cite[Chap. IV, \S 1.4]{Bour}, if $S$ and $W$ are Noetherian then the set $\mu \,(W)$ of minimal elements of $Supp(W)$ is finite and consists of prime ideal of the ring $S$ which are minimal over $An{n_R}(W)$. \par
Let $N$ and $K$ be normal subgroups of a group $G$ such that $K \le N$ and the factor-group ${N /K}$ is free abelian with free generators $K{x_1},K{x_2},...,K{x_n}$. We will call the system of subgroups $\chi \, = \left\{ { < K,{{\left\{ {{x_j}} \right\}}_{j \in J}} > \left| {\,J \subseteq \left\{ {1,...,n} \right\}} \right.} \right\}$ a complete system of subgroups over $K$. \par
Let $R$ be a ring and let $I$ be a $G$-invariant ideal of the group ring $RK$. The ideal $I$ is said to be $G$-grand 
if $R /(R \bigcap I ) = f$ is a field, $\left| K / I^{\dag}  \right| < \infty $ and $I = (RF\bigcap I )RK$, where $F$ 
is a normal subgroup of $G$, such that ${I^\dag } \le F \le K$ and the quotient group ${F \{I^\dag }$ is abelian. 
\par
Since $\left| K/I^{\dag } \right| < \infty $ and ${I^\dag } \le K \le N$, we see that $N /I^{\dag }$ is a finitely generated finite-by-abelian group and hence $N /I^{\dag }$ contains a characteristic central subgroup $A$ of finite index. As the quotient group ${N /K}$ is finitely generated free abelian, the subgroup $A$ is also finitely generated free abelian. Then it follows from \cite[Proposition 1.6]{Pass1989} that the group ring $fA$ is Noetherian. \par
Let $W$ be a finitely generated $RN$ -module then $W / {WI}$ can be considered as an $fA$-module. Since $A$ is a subgroup of the finite index in $N /{I^\dag }$, we see that ${W /{WI}}$ is a finitely generated $fA$-module. Therefore, ${W /{WI}}$ is a Noetherian $fA$-module. \par

Let $\Omega $ be a finite set of generators of $W$ and let $V$ be an $RK$-submodule of $W$ generated by $\Omega $. Then, evidently, $W$ is an image of $V{ \otimes _{RK}}RN$ under some $RN$ -homomorphism $\alpha $. \par 
Put $\chi (W) = \{ X \in \chi{\, |\,} {Ker\,\alpha \bigcap (V{ \otimes _{RK}}RX) = 0} \}$,  
$r(W) = \max \{ r(X /K){\,| \,}X \in \chi (W)\}$ 
 and $\mu \chi (W) = \{ X \in \chi (W){\,| \,}r(X /K) = r(W)\}$. \par
Let ${\rho _A}({W /{WI}})$ be the set of prime ideals $P \in \mu ({W / {WI}})$ of maximal dimension and denote by $d({W / {WI}})$ this dimension if  $\mu(W /{WI}) \ne \emptyset $, i.e. ${W/ {WI}} \ne 0$. If ${W /{WI}} = 0$ then we put $d({W /{WI}}) =  - 1$. It is not difficult to note that $d({W / {WI}})$ does not depend on the choice of the central subgroup $A$ of finite index in $N /{I^\dag }$.

\begin{proposition}\label{Proposition 4.2.1} Let $N$ be a nilpotent group of finite rank which has a finitely generated normal subgroup $H$ such that $N = ZH$, where $Z$ is the centre of $N$. Let  $K$ be a normal subgroup of $H$ such that the quotient group ${H / K}$ is free abelian and $Z \cap H$ is an isolated subgroup of $K$ and let $\chi $ be a complete system of subgroups of $H$ over $K$. Let $R$ be a finitely generated commutative domain and $k$ be the field of fractions of $R$. Let $U$ be a cyclic $kN$-module such that $An{n_{kZ}}(U) = P$ is a prime ideal and $U = akN = akH{ \otimes _{kH}}kN$. Let $W = aRH$ and suppose that for any subgroup $X \in \mu \,\chi (W)$ the module $U$ is $k(XZ) /Pk(XZ)$-torsion-free. Then there is an $H$-grand ideal $I$ of the ring $RK$ such that , $d(W /WI) = d(V /VI) =  r(W)$ for any $RH$-submodule $0 \ne V \le W$ and: \par
(i) the module $W$ contains a submodule $\hat W \ne 0$ such that ${\rho _A}({\hat W / {\hat WI}})={\rho _A}({bRH / {bRHI}})$ for any nonzero element $0 \ne b \in \hat WkN = \hat W{ \otimes _{RH}}kN$. \par
(ii) if $char \, (kZ/P) = 0$ and $\pi $ a finite set of prime integers then we can choose the ideal $I$ such that $char \, (R/(R \cap I)) \notin \pi $.
\end{proposition}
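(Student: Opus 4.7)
The plan is to construct the $H$-grand ideal $I$ by combining a $G$-invariant prime ideal of $R(Z\cap H)$ with an $H$-invariance argument at the level of $RK$, and then verify the dimension equalities using the torsion-freeness hypothesis together with the Noetherian structure of $W/WI$ over the ring $fA$. Since $Z\cap H$ is isolated in $K$ and central in $N$, the quotient $K/(Z\cap H)$ embeds as a pure subgroup of the torsion-free finitely generated nilpotent group $H/(Z\cap H)$. Using Theorem 3.2.1 (or 3.2.3) applied to the finitely generated commutative domain $R$ and the abelian group $Z\cap H$, one produces a faithful $G$-invariant prime ideal $Q$ of $R(Z\cap H)$ with $R(Z\cap H)/Q$ a field. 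Lifting $Q$ to $RK$ via an $H$-invariant extension, and using Lemma 2.1.2(iii) to reduce to a finite-index core on $K/I^\dag$, yields an $H$-grand ideal $I$ of $RK$ whose associated normal subgroup $F$ is central-by-finite, so $f=R/(R\cap I)$ is a field and $F/I^\dag$ is abelian as required.

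For the dimension count, let $A$ be a finitely generated free abelian central subgroup of finite index in $N/I^\dag$; then $fA$ is a commutative Noetherian domain whose Krull dimension (after a standard rank computation) matches $r(W)$. The equality $d(W/WI)=r(W)$ follows because for each $X\in\mu\chi(W)$ the natural embedding $V\otimes_{RK}RX\hookrightarrow W$, upon reduction modulo $I$, yields an $fA$-submodule whose associated primes sit at dimension $r(X/K)=r(W)$. Extending this equality from $W$ to every nonzero submodule $V\le W$ is the key technical step: any $V$ generates a nonzero $kN$-submodule of $U=akH\otimes_{kH}kN$, and the assumption that $U$ is $k(XZ)/Pk(XZ)$-torsion-free for $X\in\mu\chi(W)$ forces $V\otimes_{RK}RX$ to be nonzero and to inject with the full rank, so a descent back preserves dimension $r(W)$.

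For part (i), the Noetherian structure of $W/WI$ over $fA$ implies that as $V$ ranges over nonzero $RH$-submodules of $W$, only finitely many sets of minimal primes can occur in $\rho_A(V/VI)$; choosing $\hat W$ to be minimal for inclusion among submodules achieving this minimum stabilises the support. For any $0\ne b\in\hat W\otimes_{RH}kN$, clearing denominators in $kH$ produces a nonzero element $b'\in\hat W$ lying in $bRH$, so $b'RH\le bRH\cap\hat W$, and by minimality $\rho_A(b'RH/b'RHI)=\rho_A(\hat W/\hat WI)$; the reverse inclusion $\rho_A(bRH/bRHI)\subseteq\rho_A(b'RH/b'RHI)$ is where the torsion-freeness hypothesis enters again, enabling passage between the $RH$-module and $kN$-module structures without losing or gaining primes. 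For part (ii), when $char\,(kZ/P)=0$ the domain $R$ has characteristic zero, and there is enough freedom in the choice of the prime $Q$ of $R(Z\cap H)$ above to ensure that its contraction to $R$ has residue characteristic outside the finite set $\pi$; this is a finite avoidance condition compatible with the $G$-invariance and faithfulness of $Q$, since $R$ admits infinitely many maximal ideals of each prescribed residue characteristic by Noether normalisation. The main obstacle throughout is the uniform dimension equality across all submodules $V\le W$, which couples the combinatorial invariant $\mu\chi(W)$ to the prime spectrum of $W/WI$ via the torsion-freeness hypothesis in a way that requires handling both ring-of-quotients and support-theoretic computations simultaneously.
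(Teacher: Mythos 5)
The paper's proof of this proposition is essentially a citation: the existence of the $H$-grand ideal together with the dimension equality $d(W/WI)=d(V/VI)=r(W)$ is taken from \cite[Proposition 1(i)]{Tush2002}, part (i) follows from \cite[Proposition 1(ii)]{Tush2002} and \cite[Lemma 5(ii),(iii)]{Tush2002} via a direct-sum decomposition of $\hat W kN$ over a transversal $T$ of $Z\cap H$ in $Z$, and part (ii) is \cite[Lemma 6]{Tush2002}. Your proposal instead tries to construct everything from scratch, and the construction of $I$ is where it goes wrong.

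Concretely: an $H$-grand ideal $I$ of $RK$ must satisfy $|K/I^\dagger|<\infty$, where $I^\dagger=(1+I)\cap K$, together with $R/(R\cap I)=f$ being a \emph{finite} residue field and $I$ being controlled by $RF$ for a normal $F$ with $F/I^\dagger$ abelian. In other words $I$ is a highly non-faithful ideal whose ``kernel group'' has finite index in $K$ --- the resulting arithmetic lives over a finite ring $fA$ for a finite field $f$. Your construction begins by producing a \emph{faithful} prime ideal $Q$ of $R(Z\cap H)$ via Theorem 3.2.3 (which moreover requires a finitely generated \emph{field}, not the domain $R$, and a \emph{fully faithful} module as input, neither of which is available here) and then lifting it; such a $Q$ has $Q^\dagger=1$, which is the opposite of what the grand-ideal condition requires. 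So the object you build is not an $H$-grand ideal, and the whole dimension bookkeeping over $fA$ --- which only makes sense because $fA$ is a group ring of a finitely generated free abelian group over a finite field --- collapses. The subsequent steps (the uniform dimension equality $d(V/VI)=r(W)$ across all nonzero submodules $V$, the stabilisation in (i), and the characteristic avoidance in (ii)) are each asserted rather than argued, and they are precisely the nontrivial content of \cite[Proposition 1 and Lemmas 5, 6]{Tush2002}; they do not follow from the torsion-freeness hypothesis in the elementary way you suggest. The one piece you get roughly right is the direct-sum reduction for (i) (splitting $b$ over a transversal of $Z\cap H$ in $Z$), but even there the key comparisons $d(V/VI)\le d(bRH/bRHI)\le d(\hat W/\hat WI)$ and $\rho_A(V/VI)\subseteq\rho_A(bRH/bRHI)\subseteq\rho_A(\hat W/\hat WI)$ rest on the monotonicity lemmas from \cite{Tush2002}, which you have not supplied a substitute for.
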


\begin{proof} 
It follows from \cite[Proposition 1(i)]{Tush2002} that there is an $H$-grand ideal $I$ of the ring $RK$ such that , $d(W / WI) = d(V / VI) = $ $r(W)$ for any $RH$-submodule $0 \ne V \le W$. \par
(i) It follows from \cite[Proposition 1(ii)]{Tush2002} the module $W$ contains a submodule $\hat W \ne 0$ 
such that  ${\rho _A}({\hat W / {\hat W I}})={\rho _A}({\hat V / \hat V I})$  
for any submodule $0 \ne \hat V \le \hat W$.  Let $0 \ne b \in \hat WkN = \hat W{ \otimes _{RH}}kN$, as $N = ZH$ , the element $b$ can be presented in the form $b = \sum\nolimits_{i = 1}^n {{b_i}{\alpha _i}{z_i}} $, where ${b_i} \in \hat W$, ${\alpha _i} \in k$, ${z_i} \in T$ and $T$ is a right transversal for $Z \cap H$ in $Z$. Since $\hat WkN = { \oplus _{z \in T}}\hat WkHz$ and $T$ is a right transversal for $Z \cap H$ in $Z$, we see that $bRH \le  \oplus _{i = 1}^n{b_i}{\alpha _i}{z_i}RH$ and, as ${b_i} \in \hat W$, ${\alpha _i} \in k$ and ${z_i} \in T \subseteq Z$, it is not difficult to show that ${b_i}{\alpha _i}{z_i}RH \cong {V_i} \le \hat W$. Therefore, we can assume that $bRH \le  \oplus _{i = 1}^n{(\hat W)_i}$ and it easily implies that $bRH$ contains a nonzero submodule $0 \ne V$ which is isomorphic to some submodule of $\hat W$. Then it follows from \cite[Lemma 5(ii)]{Tush2002} that $d(V/VI) \le d(bRH/bRHI) \le d( \oplus _{i = 1}^n{(\hat W/\hat WI)_i}) = d(\hat W/\hat WI)$ and, as $d(V/VI) = d(\hat W/\hat WI)$, we can conclude that $d(V/VI) = d(bRH/bRHI) = d(\hat W/\hat WI)$. Then it follows from \cite[Lemma 5(iii)]{Tush2002} that $\rho _A    (V/VI) \subseteq \rho _A    (bRH/bRHI) \subseteq \rho _A    (\hat W/\hat WI)$ and hence, as $\rho _A    (V/VI) = \rho _A    (\hat W/\hat WI)$, we can conclude that $\rho _A    (bRH/bRHI) = \rho _A    (\hat W/\hat WI)$. \par
(ii) The assertion follows from \cite[Lemma 6]{Tush2002}.     
\end{proof}
     \begin{proposition}\label{Proposition 4.2.2} Let $H$ be a finitely generated nilpotent group and $K$ be a normal subgroup of $H$ such that the quotient group $H / K$ is free. Let $R$ be a commutative domain, $W$ be a finitely generated $RH$-modulus and $I$ be an $H$ -grand ideal  of $RK$ such that $f = R / (R \bigcap I )$ is a field of  characteristic $p$. Suppose that $W = V{ \otimes _{RL}}RH$ for some subgroup $L$ of the finite index in $H$ such that $K \le L$ and $p$ does not divide $\left| {H:L} \right|$. Let $A$  be a central subgroup of the finite index of $H / I^{\dagger} $. Let $C$ be  the subgroup of $A$ generated by controllers of ideals from ${\rho _A}(W /WI)$ and $D$ be the preimage of $C$ in $H$, then $D \le L$. 
\end{proposition}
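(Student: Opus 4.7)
The plan is to analyze $W/WI$ as an $fA$-module by exploiting the induction hypothesis $W = V \otimes_{RL} RH$ together with the centrality of $A$ in $\bar H := H/I^\dag$ and the coprimality $p \nmid |H:L|$. First, since $1 - x \in I$ for every $x \in I^\dag$, the subgroup $I^\dag$ acts trivially on $W/WI$, so $W/WI$ is naturally a finitely generated module over the quotient $RH/IRH$, a crossed product built from $f = R/(R \cap I)$ and $\bar H$; in particular it is a finitely generated $fA$-module. Set $\bar L := L/I^\dag$ and $A_L := A \cap \bar L$. Centrality of $A$ in $\bar H$ identifies $|A:A_L|$ with $|A\bar L:\bar L|$, which divides $|\bar H:\bar L| = |H:L|$ and is therefore coprime to $p$.

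Next I would decompose $W/WI$ as an $fA$-module. Fixing a right transversal $T$ for $\bar L$ in $\bar H$ one has $W/WI = \bigoplus_{t \in T} (V/VI) \otimes t$ as $f$-vector spaces, and centrality of $A$ implies that $a \in A$ permutes these summands according to the left translation action of $A$ on $\bar H/\bar L$. Because $A$ is central, the stabilizer of every coset $t\bar L$ is $A \cap t\bar L t^{-1} = A \cap \bar L = A_L$, so every $A$-orbit on $\bar H/\bar L$ has exactly $|A:A_L|$ elements. Regrouping summands orbit by orbit, and viewing $V/VI$ as an $fA_L$-module by restriction of its $f\bar L$-structure, yields an $fA$-module isomorphism $W/WI \cong \bigl((V/VI) \otimes_{fA_L} fA\bigr)^m$ with $m = |\bar H : A\bar L|$. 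In particular $Supp_{fA}(W/WI) = Supp_{fA}\bigl((V/VI) \otimes_{fA_L} fA\bigr)$.

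Finally I would translate this into the statement about controllers in $\rho_A(W/WI)$. Because $|A:A_L|$ is coprime to $p = char\,f$, the ring extension $fA_L \hookrightarrow fA$ is finite \'etale of Maschke type, so every minimal prime of maximal dimension in $Supp_{fA}\bigl((V/VI) \otimes_{fA_L} fA\bigr)$ lies over a minimal prime of the same (maximal) dimension in $Supp_{fA_L}(V/VI)$; the controller of such a prime is determined by its restriction to $fA_L$ and hence is contained in $A_L$. Consequently the subgroup $C$ generated by the controllers of ideals in $\rho_A(W/WI)$ satisfies $C \leq A_L \leq \bar L$, and taking preimages in $H$ yields $D \leq L$. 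The main obstacle is this last step: rigorously showing that the \'etale cover $fA_L \hookrightarrow fA$ does not enlarge the group-theoretic controllers of dimension-maximal primes. This is precisely where the coprimality $p \nmid |A:A_L|$ is essential---without it, Galois-type factorization of primes of $fA_L$ in $fA$ could produce maximal-dimension primes whose controllers legitimately exceed $A_L$---and where the detailed controller calculus developed in \cite{Tush2002} provides the natural technical input.
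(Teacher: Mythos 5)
The paper does not actually prove this proposition: its ``proof'' is the single sentence ``The assertion was proved in \cite[Proposition 2]{Tush2002}.'' So there is no argument in the paper to compare yours against, and any evaluation has to be on the merits of your sketch alone.

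Your first two steps are reasonable. Since $I^\dag$ acts trivially on $W/WI$ and $I$ is $H$-invariant, $W/WI$ is indeed a module over the image of $RH$ in a crossed product over $f$, and because $H/K$ is abelian every $L\ge K$ is normal, so the decomposition $W/WI=\bigoplus_{t\in T}(V/VI)\otimes t$ with $A$ permuting summands by right translation on $\bar H/\bar L$ is fine; the stabilizer computation $A\cap t\bar L t^{-1}=A\cap\bar L=A_L$ uses the centrality of $A$ correctly, and restriction to $A_L$ of each summand is untwisted for the same reason. The resulting identification $W/WI\cong\bigl((V/VI)|_{A_L}\otimes_{fA_L}fA\bigr)^{m}$ as $fA$-modules, hence the equality of supports, is believable.

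The gap is exactly at your final claim, and it is not a minor technicality: as stated, ``the controller of such a prime is determined by its restriction to $fA_L$ and hence is contained in $A_L$'' is false for the natural (Roseblade/Bergman) notion of controller, namely the smallest $B\le A$ with $P=(P\cap fB)fA$. Take $f=\mathbb{F}_3$, $A=\mathbb{Z}$, $A_L=2\mathbb{Z}$ (so $|A:A_L|=2$ is prime to $p=3$), and let $V/VI=fA_L/(y-1)$ with $y$ the generator of $A_L$. Then $(V/VI)\otimes_{fA_L}fA=fA/(x^2-1)$, its minimal primes of maximal dimension are $(x-1)$ and $(x+1)$, and each has controller equal to all of $A$: the contraction $(x-1)\cap fA_L=(x^2-1)fA_L$ generates only $(x^2-1)fA\subsetneq(x-1)$. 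So the controller is strictly larger than $A_L$ despite the coprimality hypothesis. In other words, a finite \'etale-type extension of commutative Laurent rings does \emph{not} preserve controllers of the primes lying over; ``lying over a prime of $fA_L$'' and ``controlled by $A_L$'' are different conditions, and the Maschke-type coprimality controls only the multiplicity structure, not the controllers. This means your plan as written would not close, and the hard content (whatever actual definition of controller and whatever additional structure Tushev uses in \cite[Proposition 2]{Tush2002}) is not a routine fill-in. You have correctly identified where the difficulty lies, but the heuristic you offer for why it works is not correct, so the proposal should be regarded as incomplete at precisely the decisive step.
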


     \begin{proof} The assertion was proved in \cite[Proposition 2]{Tush2002}.
\end{proof}

\subsection{Stabilised uniform modules over group rings of nilpotent FATR-groups} \label{Subsection 4.3}

	Let $R$ be a group and $W$ be an $R$-module. A submodule $U$ of $W$ is said to be essential if $U \cap X \ne 0$ for any nonzero submodule $X$ of $W$. The module $W$ is said to be uniform if each nonzero submodule of $W$ is essential. $R$-modules $W$ and $V$ are said to be similar if their injective hulls $[W]$ and $[V]$ are isomorphic. The modules $W$ and $V$ are similar if and only if they have isomorphic essential submodules. We will say that a submodule $U$ of $W$ is solid in $W$ if $W$ has no nonzero submodule which is isomorphic to a submodule of a proper quotient of  $U$. \par
	Let $G$ be a group, $H$ be a normal subgroup of $G$ and $W$ be an $RH$-module. By  \cite[Lemma 3.2(i)]{BrooBrow1985}, the stabilizer $Stab_{G}[W] = \{ g \in G\,|\, Wg \, and \, W \, are \, similar \} $ of the module $W$ in $G$ is a subgroup of $G$. If the module $W$ is uniform then $Stab_{G}[W] = Stab_{G}[V]$ for any nonzero submodule $V \le W$ because $[W] = [V]$. 

\begin{lemma}\label{Lemma 4.3.1} Let $G$ be a group and $N$ be a normal subgroup of $G$. Let $R$ be a   commutative domain and $W$ be a solid $RN$-submodule of an $RG$-module. Then: \par
          (i)  $WRG = WRS{ \otimes _{RS}}kG = { \oplus _{t \in T}}WRSt$, where $S = Sta{b_G}[W]$ and $T$ is a   right transversal for $S$ in $G$; \par
          (ii) if $WRG$ is an irreducible $RG$-module then $WRS$ is an irreducible $RS$-module; \par
          (iii) if $WRG$ is a semiprimitive $RG$-module then $WRS$ is an semiprimitive $RS$-module; \par
(iv) if the subgroup $N$ is abelian and $W \cong RN/P$, where $P$ is a  prime ideal of  $RN$ then $Sta{b_G}[W] = {N_G}(P)$ is the normaliser of $P$ in $G$.  
\end{lemma}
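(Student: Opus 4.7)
The plan is to prove (i) first, from which (ii) and (iii) follow almost formally, and then handle (iv) by a direct injective-hull computation. A first observation is that solidity forces $W$ to be uniform: any decomposition $W=W_1\oplus W_2$ with $W_1,W_2\ne 0$ would realise $W_2\le M$ as a nonzero $RN$-submodule isomorphic to the proper quotient $W/W_1$, violating solidity. Strengthening this, any nonzero $RN$-homomorphism $\psi\colon X\to M$ out of a nonzero $RN$-submodule $X\le W$ must be injective, since otherwise $\psi(X)\cong X/\ker\psi\hookrightarrow W/\ker\psi$ would be a nonzero submodule of $M$ isomorphic to a submodule of a proper quotient of $W$. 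Note also that $N\le S$ because $Wn=W$ for every $n\in N$.

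For (i), I would show that the canonical surjection $WRS\otimes_{RS}RG\twoheadrightarrow WRG$ is injective, equivalently that $\sum_{t\in T}WRSt$ is direct in $M$. By translating by $(t^\ast)^{-1}$ it suffices to verify $WRS\cap\sum_{t\in T\setminus\{1\}}WRSt=0$. Suppose toward contradiction this intersection $Z$ is nonzero. Since $WRS=\sum_{s\in S}Ws$ with each $Ws$ uniform (being an $s$-translate of the uniform $W$), a standard finite-sum argument shows that $Z$ contains a uniform nonzero $RN$-submodule $Z_0$. On the one hand $Z_0\le WRS$ sits in a direct sum of copies of the indecomposable injective $[W]$ (using that every $Ws$ with $s\in S$ is similar to $W$), so the injective hull $E(Z_0)$ is isomorphic to $[W]$. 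On the other hand $Z_0\le\sum_{t\ne 1}WRSt$ sits in a direct sum of copies of $[Wt]$ for $t\in T\setminus\{1\}$, forcing $E(Z_0)\cong[Wt_0]$ for some $t_0\in T\setminus\{1\}$. Combining yields $[W]\cong[Wt_0]$ with $t_0\notin S$, contradicting the definition of the stabilizer.

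Statement (ii) is then immediate: a proper nonzero $RS$-submodule $U\lneq WRS$ would yield the proper nonzero $RG$-submodule $U\otimes_{RS}RG\cong\bigoplus_{t\in T}Ut$ of $WRG$, contradicting irreducibility. Statement (iii) follows by transitivity of induction: if $WRS=V\otimes_{RH}RS$ with $H\le S$ of finite index, then (i) combined with tensor-associativity gives $WRG=V\otimes_{RH}RG$, realising $WRG$ as induced from $H\le G$; in the semiprimitive regime in which the relevant index $|G:H|=|G:S|\cdot|S:H|$ is finite, this contradicts semiprimitivity of $WRG$. For (iv), commutativity of $RN$ identifies $Wg=(RN/P)g$ with $RN/P^g$ as an $RN$-module, where $P^g$ is the conjugate ideal under the automorphism of $RN$ induced by $g$-conjugation on $N$. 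Similarity of the uniform prime quotients $RN/P$ and $RN/P^g$, by Matlis-type uniqueness of the associated prime of an indecomposable injective over a commutative ring, is equivalent to $P=P^g$, i.e.\ $g\in N_G(P)$.

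The principal technical obstacle is the injective-hull bookkeeping in (i): justifying rigorously that every uniform $RN$-submodule of $WRS$ has injective hull $\cong[W]$, and symmetrically that a uniform $RN$-submodule of $\sum_{t\ne 1}WRSt$ has injective hull $\cong[Wt_0]$ for some $t_0\in T\setminus\{1\}$. This requires embedding the relevant modules into direct sums of indecomposable injectives and invoking the general fact that an indecomposable injective submodule of such a direct sum is isomorphic to one of the summands. Solidity is what rules out the alternative route via accidental nonzero $RN$-maps from $W$ into non-similar translates, through the zero-or-injective dichotomy that ultimately enforces the similarity $[W]\cong[Wt_0]$ needed for the contradiction.
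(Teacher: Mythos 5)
Your approach to part (i) departs from the paper's. The paper reduces (i) to verifying that every nonzero $RS$-submodule of $WRS$ contains a uniform $RN$-submodule and then invokes Brookes--Brown \cite[Lemma 3.2(ii)]{BBrow1985} to obtain the induced-module decomposition; that verification is done by an induction on the length of $b = \sum_{i=1}^n w_i t_i$ ($t_i$ in a transversal for $N$ in $S$, $t_1=1$): either some $a\in RN$ kills $w_1$ but not $b$, shortening $b$, or $\mathrm{ann}_{RN}(b)\supseteq\mathrm{ann}_{RN}(w_1)$, and then solidity forces $\mathrm{ann}_{RN}(b)=\mathrm{ann}_{RN}(w_1)$, so $bRN\cong w_1RN\le W$ is uniform. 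You instead try to establish the directness $WRG=\bigoplus_{t\in T}WRSt$ in one stroke via injective hulls, avoiding the external citation.

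The sticking point in your version is the claim that $Z_0\le WRS$ ``sits in a direct sum of copies of the indecomposable injective $[W]$.'' The module $WRS=\sum_{s\in S}Ws$ is only a \emph{sum} of modules similar to $W$, not a direct sum, and the natural map $\bigoplus_{s\in S}Ws\to WRS$ is a surjection; it does not give an embedding of $WRS$ (nor of a uniform submodule $Z_0$) into a direct sum of copies of $[W]$. The same problem recurs when you want $Z_0\le\sum_{t\ne1}WRSt$ to sit inside a direct sum of copies of $[Wt]$: that sum is not known to be direct yet (it is part of what you are proving), and even if it were, each $WRSt$ has the same issue. In short, the assertion that every uniform $RN$-submodule of $WRS$ is similar to $W$ is true but is precisely the hard technical content; your ``standard finite-sum argument'' is the place where solidity must be used as in the paper's induction, and the injective-hull direct-sum shortcut does not substitute for it. Your auxiliary observation (nonzero $RN$-maps from submodules of $W$ into $M$ are injective) is a correct reformulation of solidity, but as written it is not wired into the argument that produces the uniform submodule.

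Parts (ii) and (iv) essentially match the paper's reasoning; for (iv) the paper argues directly from $\mathrm{ann}_{RN}(w)=P$ for all $0\ne w\in W$, which avoids any appeal to Matlis theory (which would need Noetherian hypotheses that are not available here, since $RN$ need not be Noetherian). Part (iii) mirrors the paper's argument; note that both your version and the paper's tacitly use $\left|G:S\right|<\infty$ when passing from $\left|S:H\right|<\infty$ to $\left|G:H\right|<\infty$ -- this does hold in every application the paper makes of the lemma, but it is worth flagging.
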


	\begin{proof} (i) It is sufficient to show that that every nonzero $RS$-submodule of $WRS$ contains a uniform $RN$-module. Then the assertion follows from \cite[Lemma 3.2(ii)]{BBrow1985}. \par
	Let $\tilde T$ be a right transversal for $N$ in $S$ then each nonzero element $b$ of $WRS$ can be presented in the form $b = \sum\nolimits_{i = 1}^n {{w_i}} {t_i}$, where $0 \ne {w_i} \in W$ and ${t_i} \in \tilde T$. We show by induction on $n$ that $bRS$ has a uniform $RN$-submodule. Since $b{t_1}^{ - 1} \in bRS$ we can assume that ${t_1} = 1$. If $n = 1$ then ${w_1}RN$ is a uniform $RN$-module. Suppose that there is an element $a \in RN$ such that ${w_1}a = 0$ and $ba \ne 0$ then $ba = \sum\nolimits_{i = 2}^n {{w_i}a} {t_i}$ and we can apply the inductive hypothesis. Thus, we can assume that $an{n_{RN}}(b) \ge an{n_{RN}}({w_1})$ and, as the module $W$ is solid in $WRG$, we can conclude that $an{n_{RN}}(b) = an{n_{RN}}({w_1})$. \par
       (ii) If $WRS$ has a proper submodule $U$ then $URG = U{ \otimes _{RS}}RG = { \oplus _{t \in T}}Ut$ is a   proper submodule of $WRG$ but it is impossible because the module $WRG$ is irreducible and the assertion follows. \par
           (iii) Suppose that there are a subgroup $K$ of $S$ and a $RK$-submodule $U$ of $WRS$ such that $WRS = U{ \otimes _{RK}}RS$. Then it follows from \cite[Chap. 2, Lemma 2.1]{Karp} that $WRG = U{ \otimes _{RK}}RG$ and, as the module $WRG$ is semiprimitive, the subgroup $K$ has finite index in $G$. Since $K \le S \le G$, it implies that the subgroup $K$ has finite index in $S$ and the assertion follows. \par
	(iv) Since for any $0 \ne w \in W$ and any $g \in G$ we have $an{n_{RN}}(w) = P$ and  $an{n_{RN}}(wg) = {P^g}$, we see that $W$ and $Wg$ have nonzero isomorphic submodules if and only if $P = {P^g}$. Then, as the module $W$ is uniform, we can conclude that $g \in Sta{b_G}[W]$ if and only if $g \in {N_G}(P)$. 
\end{proof}

\begin{lemma}\label{Lemma 4.3.2} Let $N$ be a nilpotent FATR-group of such that the quotient group $N/Z$ is finitely generated and torsion free, where $Z$ is the centre of $N$. Let $C = i{s_N}(ZN')$, where $N'$ is the derived subgroup of $N$. Let $k$ be a finitely generated field such that $char \, k \notin Sp(N)$ and $U$ be a uniform faithful $kN$-module such that $an{n_{kZ}}(U) = P$ is a maximal ideal of $kZ$. Suppose that the module $U$ is $kN/PkN$-torsion but for any subgroup $Y \le N$ such that $C \le Y$ and  $\left| {N:Y} \right| = \infty $ the module $U$ is $kY/PkY$-torsion-free. Then there exist a nonzero element $a \in U$ and a finitely generated dense subgroup $H$ of $N$ such that $N = ZH$, $char \, k \notin \pi (N/H)$, $akN = akH{ \otimes _{kH}}kN$ and for any nonzero element $b \in akN$ there is an element $c \in kN$ such that $0 \ne bc \in akH$.
\end{lemma}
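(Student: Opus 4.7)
The plan is to first choose $a$ so that $akN$ is a simple $R$-module (where $R = kN/PkN$), then construct $H$ via Section~3 and Theorem~4.1.5 applied to this $a$; property~(B) then follows trivially from simplicity.

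\textbf{Choice of $a$.} Choose $a$ so that $akN$ is a simple $R$-module (equivalently a simple $kN$-module, since $PkN$ annihilates $akN$). Uniformity of $U$ implies its socle as an $R$-module is $0$ or simple, so it suffices to show the socle is nonzero. This is the hardest step: the combination of the $kN/PkN$-torsion hypothesis (giving every element nonzero annihilator in $R$) and the $kY/PkY$-torsion-free hypothesis for infinite-index $Y \supseteq C$ (constraining annihilators to be ``concentrated'' along the $N/C$-direction), together with Noetherianity of $R = f * (N/Z)$ (where $f = kZ/P$) and Proposition~4.2.1 applied to a suitable free-abelian section of a finitely generated subgroup of $N$, produces an $R$-critical cyclic submodule; its annihilator is a maximal right ideal, yielding the desired simple submodule of $U$ from which we extract~$a$.

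\textbf{Construction of $H$.} Since $Z$ is abelian of finite total rank with $char \, k \notin Sp(Z) \subseteq Sp(N)$, Theorem~3.1.4 applied to $Z$ yields a finitely generated dense subgroup $H_Z \leq Z$ with $char \, k \notin \pi(Z/H_Z)$. Because $N/Z$ is finitely generated, pick a finitely generated $H_0 \leq N$ with $N = ZH_0$. Set $H_1 = \langle H_Z, H_0 \rangle$; using centrality of $Z$ (so that $(zh)^n = z^n h^n$), $H_1$ is finitely generated, dense in $N$, satisfies $N = ZH_1$, and $char \, k \notin \pi(N/H_1)$ because $N/H_1 \cong Z/(Z \cap H_1)$ is a quotient of $Z/H_Z$. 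Now apply Theorem~4.1.5 with $G = N$ and $D = A = Z$ (all hypotheses are routine: $N/Z$ is polycyclic, $Z$ is trivially isolated abelian and $N$-invariant, $P$ is maximal and faithful since $P^\dagger \subseteq C_N(U) = 1$, and $U$ is automatically $kZ/P$-torsion-free because $kZ/P$ is a field) to get a finitely generated $H_a \leq N$ with $akN = akH_a \otimes_{kH_a} kN$. Set $H = \langle H_1, H_a \rangle$; enlarging $H_a$ to $H$ preserves the tensor decomposition via Lemma~4.1.4 (since $Ann_{kH_a}(a) \subseteq Ann_{kH}(a)$ combined with $Ann_{kN}(a) = (Ann_{kH_a}(a))kN$ forces $Ann_{kN}(a) = (Ann_{kH}(a))kN$), and the other properties persist.

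\textbf{Property (B).} For any nonzero $b \in akN$, simplicity of $akN$ as $R$-module gives $bR = aR = akN$, so there exists $\bar c \in R$ with $b\bar c = a$; any lift $c \in kN$ of $\bar c$ satisfies $bc = a \in akH$ with $bc \neq 0$, which is precisely property~(B).
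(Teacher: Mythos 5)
Your construction of $H$ is fine, and even a bit cleaner than the paper's: you apply Theorem~4.1.5 with $D=A=Z$, where the $kD/PkD$-torsion-free hypothesis is automatic because $kZ/P$ is a field, whereas the paper takes $D$ to be a suitable subgroup $Y$ with $C\le Y$ and $N/Y$ infinite cyclic. The enlargement via Lemma~4.1.4 is correct: if $akN = akH_a\otimes_{kH_a}kN$ and $H_a\le H$, then $(ann_{kH_a}(a))kN\subseteq(ann_{kH}(a))kN\subseteq ann_{kN}(a)=(ann_{kH_a}(a))kN$ forces equality.

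However, there is a genuine gap in the choice of $a$. You claim $a$ can be chosen so that $akN$ is a \emph{simple} $R$-module, $R=kN/PkN$, and you derive property (B) from simplicity. This is the crux and it is not established. The sketch you offer has a flawed step: producing an ``$R$-critical cyclic submodule'' does not give a maximal annihilator. A $\rho$-critical module is simple only when $\rho=0$; for $\rho>0$ the annihilator need not be maximal, so no simple submodule is obtained this way. More fundamentally, the hypotheses do not give a nonzero socle of $U$ over $R$. The paper does \emph{not} prove $akN$ simple; it localizes. Writing $R=\tilde k*\tilde N=(\tilde k*\tilde Y)*\langle\tilde g\rangle$ and passing to $Q*\langle\tilde g\rangle$ with $Q=\tilde k*\tilde Y(\tilde k*\tilde Y)^{-1}$ a division ring, the $kN/PkN$-torsion hypothesis (together with $kY/PkY$-torsion-freeness) makes $aQ*\langle\tilde g\rangle$ \emph{finite-dimensional over $Q$}, and a finite-dimensional module over $Q*\langle\tilde g\rangle$ has a simple submodule. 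After replacing $a$ suitably, $a\tilde k*\tilde N(\tilde k*\tilde Y)^{-1}$ is irreducible, but $akN=a\tilde k*\tilde N$ itself is typically not (think of $k[x,x^{-1}]\subsetneq k(x)$ in the commutative model). Property (B) then follows not from simplicity of $akN$, but from irreducibility of the localized module: for $0\ne u\in akN$ there is $\tilde v\tilde s^{-1}$ with $u\tilde v=a\tilde s\ne 0$, and Lemma~4.1.2(iii) supplies $\tilde b$ with $0\ne\tilde s\tilde b\in\tilde k*\widetilde{(Y\cap H)}$, whence $0\ne u\tilde v\tilde b=a\tilde s\tilde b\in akH$. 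Note the conclusion is only $bc\in akH$, not $bc=a$; the paper's weaker statement is exactly what the localization argument delivers, and the stronger simplicity claim is both unnecessary and, as far as the hypotheses allow one to say, unavailable. To repair your proof you would need to replace the simplicity step by the localization argument (or supply a genuinely new reason why the socle of $U$ over $R$ is nonzero, which your sketch does not do).
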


\begin{proof} Let $V$ be a nonzero submodule of $U$ then it is easy to note that the submodule $V$ is $kN/PkN$-torsion but for any subgroup $Y \le N$ such that $C \le Y$ and  $\left| {N:Y} \right| = \infty $ the module $V$ is $kY/PkY$-torsion-free. Therefore, in the proof we can replace the module $U$ with any of its non-zero submodules. So, we can assume that $U = akN$ is a cyclic $kN$-module. \par
It follows from the definition of $C$ that the quotient group ${N / C}$ is free abelian. Then there is a subgroup $Y$ of $N$ such that $C \le Y$ and the quotient group $N/Y$ is infinite cyclic. The quotient ring $kN/PkN$ is a crossed product $\tilde k * \tilde N$, where $\tilde k = kZ/P$ and $\tilde N = N/Z$. Since $an{n_{kZ}}(U) = P$, we can consider $U$ as a torsion $\tilde k * \tilde N$-module which is $\tilde k * \tilde Y$-torsion-free and $U = a\tilde k * \tilde N$. By \cite[Corollary 37.11]{Pass1989}, $\tilde k * \tilde Y$ is an Ore domain and hence, by Proposition 4.1.1, there exist a partial ring of quotients $\tilde k * \tilde N{(\tilde k * \tilde Y)^{ - 1}}$ and a cyclic $\tilde k * \tilde N{(\tilde k * \tilde Y)^{ - 1}}$-module $a\tilde k * \tilde N{(\tilde k * \tilde Y)^{ - 1}}$ such that $U = a\tilde k * \tilde N \le a\tilde k * \tilde N{(\tilde k * \tilde Y)^{ - 1}}$. By Proposition 4.1.1(i), the ring $\tilde k * \tilde N{(\tilde k * \tilde Y)^{ - 1}}$ is a crossed product $Q * \left\langle {\tilde g} \right\rangle $, where $Q = \tilde k * \tilde Y{(\tilde k * \tilde Y)^{ - 1}}$ is a division ring and $\left\langle {\tilde g} \right\rangle  = \tilde N/\tilde Y$ is an  infinite cyclic group. Since the module $U$ is $\tilde k * \tilde N$-torsion and  $\tilde k * \tilde N = (\tilde k * \tilde Y) * \left\langle {\tilde g} \right\rangle $, it is not difficult to note that ${\dim _Q}aQ * \left\langle g \right\rangle  < \infty $. It easily implies that $aQ * \left\langle g \right\rangle $ contains an irreducible $Q * \left\langle g \right\rangle $-submodule $abQ * \left\langle g \right\rangle $ and, evidently, we can assume that $b \in \tilde k * \tilde N$. So, replacing $a$ with $ab$ we can assume that the module $a\tilde k * \tilde N{(\tilde k * \tilde Y)^{ - 1}}$ is irreducible. Then for any $0 \ne u \in U = a\tilde k * \tilde N$ there is an element $\tilde v{\tilde s^{ - 1}} \in \tilde k * \tilde N{(\tilde k * \tilde Y)^{ - 1}}$ such that $u\tilde v{\tilde s^{ - 1}} = a$ and hence $u\tilde v = a\tilde s \ne 0$, where $\tilde v \in \tilde k * \tilde N$ and $\tilde s \in \tilde k * \tilde Y$. \par
By Theorem 4.1.5, there is a finitely generated subgroup $H \le N$ such that $akN = akH{ \otimes _{kH}}kN$. 
Since the quotient group $N/Z$ is finitely generated, if necessary we can take the finitely generated subgroup $H$  bigger such that $N = HZ$. It easily implies that $H$ is a normal subgroup of $N$.  Since $char \, k \notin Sp(N)$, the $p$-component ${N_p}/H$ of $N/H$ is finite if $p = char \, k > 0$.  Then replacing $H$ with ${N_p}$ we can assume that $char \, k \notin \pi (N/H)$. \par
Since for any $0 \ne u \in U = a\tilde k * \tilde N$ there are elements $\tilde v \in \tilde k * \tilde N$ and $\tilde s \in \tilde k * \tilde Y$ such that $u\tilde v = a\tilde s \ne 0$, it follows from Lemma 4.1.2. that there is an element $0 \ne \tilde b \in \tilde R * \tilde Y$ such that $0 \ne \tilde s\tilde b = \tilde c \in \tilde k * \widetilde {(Y \cap H)}$ and hence $0 \ne u\tilde v\tilde b = a\tilde s\tilde b \in akH$. 
\end{proof}

\begin{proposition}\label{Proposition 4.3.3} Let $G$ be a group and $N$ be a normal nilpotent FATR-subgroup of the group $G$ such that the quotient group $N/Z$ is finitely generated and torsion-free, where $Z$ is the centre of $N$. Let $C = i{s_N}(ZN')$, where $N'$ is the derived subgroup of $N$. Let $k$ be a finitely generated field such that $char \, k \notin Sp(N)$ and $U$ be a uniform faithful $kN$-module such that $an{n_{kZ}}(U) = P$ is a maximal G- invariant ideal of $kZ$. Suppose that the module $U$ is $kN/PkN$-torsion but for any subgroup $Y \le N$ such that $C \le Y$ and  $\left| {N:Y} \right| = \infty $ the module $U$ is $kY/PkY$-torsion-free. If $Sta{b_G}[U] = G$ then $N$ contains a $G$-invariant finitely generated dense subgroup.   
\end{proposition}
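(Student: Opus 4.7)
The plan is to apply Lemma 4.3.2 to obtain a nonzero element $a \in U$ and a finitely generated dense subgroup $H \le N$ such that $N = ZH$, $\mathrm{char}\,k \notin \pi(N/H)$, $akN = akH \otimes_{kH} kN$, together with the clearing property that every nonzero $b \in akN$ admits some $c \in kN$ with $0 \ne bc \in akH$. Since $N = ZH$ with $Z$ central, $H$ is normal in $N$, and $H' = [H,H] = [N,N] = N'$ is automatically $G$-invariant. The candidate $G$-invariant finitely generated dense subgroup will be $L := \bigcap_{g \in G} H^g$: it is manifestly $G$-invariant and contained in $H$, and once $|H:L| < \infty$ is established, $L$ is finitely generated as a finite-index subgroup of the finitely generated $H$, and dense in $N$ as a finite-index subgroup of the dense $H$. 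Thus the whole proof reduces to establishing $|H:L| < \infty$.

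First I would assume for contradiction that $|H:L| = \infty$ and apply Lemma 2.1.3 to $H$. This produces a countable set $\{g_i\}_{i \in \mathbb{N}} \subseteq G$ with $\bigcap_{i}(H \cap H^{g_i}) = L$, together with a descending chain $\{H_i\}$ of $H$-invariant finite-index subgroups of $H$ such that $H/\bigcap_i H_i$ is free abelian, $\bigcap_{j \le i}(H \cap H^{g_j}) \subseteq H_i$, $\pi(H/H_i) \subseteq \pi(N/H)$, and the prime dichotomy of Lemma 2.1.3(i) holds. The goal is then to derive a contradiction from the existence of this chain using the module $U$ and the hypothesis $\mathrm{Stab}_G[U] = G$.

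Next I would exploit $\mathrm{Stab}_G[U] = G$ as follows. The similarity $U \sim Ug_i$ supplies, for each $i$, a nonzero $kN$-submodule $V_i \le U$ together with an injective $kN$-map from $V_i$ under the $g_i$-twisted action into $U$; equivalently, one can produce elements of $U$ whose $kN$-annihilators are $g_i$-conjugates of annihilators of elements already living in $U$. Combining such identifications with the clearing property of Lemma 4.3.2, and intersecting finitely many of them inductively, one constructs for each $i$ a nonzero element $c_i \in akH$ whose $kH$-annihilator corresponds group-theoretically to a subgroup contained in $\bigcap_{j \le i}(H \cap H^{g_j}) \subseteq H_i$. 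Lemma 2.1.4, applied with $L_j := H \cap H^{g_j}$ (after passing to a finite-index subgroup containing $H'$ so that the $L_j$ contain the derived subgroup of $H$) and $S_j$ the preimage of the Sylow $p$-subgroup dictated by the dichotomy, then forces the annihilator of some nonzero element of $akH$ to contain a subgroup $D \le \bigcap_i H_i$ large enough to violate the clearing property, which is the desired contradiction.

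The hard part will be the module-theoretic translation in the third paragraph. The similarity $U \sim Ug_i$ only provides partial $kN$-level information rather than a globally defined $kG$-action on $U$ or on its injective hull, so extracting honest annihilator identities requires either constructing a (possibly projective) $kG$-action on the injective hull $[U]$ that extends the $kN$-action, or working one $g_i$ at a time with carefully chosen representatives and verifying that the resulting finite intersections compose coherently; the non-abelian nature of $H$ and $N$ makes tracking $g_i$-conjugates through the tensor decomposition $akN = akH \otimes_{kH} kN$ the delicate technical point.
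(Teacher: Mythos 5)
Your opening moves coincide with the paper's: reduce to a cyclic module, invoke Lemma~4.3.2 to get $a$ and a finitely generated dense $H$ with $N=ZH$ and the clearing property, set $L=\bigcap_{g\in G}H^{g}$, note $H'=N'\le L$, assume $|H:L|=\infty$, and apply Lemma~2.1.3 and then Lemma~2.1.4 to extract a contradiction from a descending chain $\{H_i\}$ with $H/\bigcap_i H_i$ free abelian. That scaffolding is correct and is exactly the paper's.

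The gap is the third paragraph, which you yourself flag as ``the hard part'': there is no mechanism presented that converts the similarity $U\sim Ug_i$ into a fixed nontrivial subgroup $D\le H$ that is forced into every $H\cap H^{g_i}$ (after Sylow correction). Annihilators of elements are ideals, not subgroups, and the step ``whose $kH$-annihilator corresponds group-theoretically to a subgroup contained in $\bigcap_{j\le i}(H\cap H^{g_j})$'' is not a theorem --- it is precisely the content that needs to be established. The paper does this through an entirely separate layer of machinery from Section~4.2 that your proposal never touches: it passes to a finitely generated domain $R\le k$, sets $W=aRH$, verifies the hypotheses of Proposition~4.2.1 to produce an $H$-grand ideal $I$ of $RK$ (with $K=\bigcap_i H_i$), a central finite-index subgroup $A$ of $K/I^{\dagger}$, and a submodule $\hat W$ of $W$ on which the commutative invariant $\rho_A(bRH/bRHI)$ is the same for \emph{all} nonzero $b\in\hat W kN$. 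The group $D$ you need is then the preimage in $H$ of the subgroup $C_A\le A$ generated by the controllers of the prime ideals in $\rho_A(W/WI)$, which is nontrivial because $\rho_A(\hat W/\hat W I)$ consists of proper ideals of $fA$. Proposition~4.2.2 is what forces $D\le S_{g_i}$ for each $i$ (after observing that $(a_g g)RH$ is induced from $R(H\cap H^{g_i})$ and replacing $H\cap H^{g_i}$ by the Sylow-$p$ correction $S_{g_i}$ with $p=\operatorname{char} f$). Only then does Lemma~2.1.4 give $D\le K$, hence $C_A=1$, contradiction. Your proposal also has the contradiction wrong: the paper's contradiction is $C_A=1$ against properness of the ideals in $\rho_A$, not a ``violation of the clearing property.'' Without Propositions~4.2.1 and~4.2.2 and the $\rho_A$/controller formalism, the central step of the argument does not exist; the blind proposal has correctly located where the difficulty lies but has not supplied the idea that resolves it.
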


\begin{proof} Let $V$ be a nonzero submodule of $U$ then it is easy to note that the submodule $V$ is $kN/PkN$-torsion but for any subgroup $Y \le N$ such that $C \le Y$ and  $\left| {N:Y} \right| = \infty $ the module $V$ is $kY/PkY$-torsion-free. Since module $U$ is uniform, $Stab_{G}[U] = Stab_{G}[V]$ and therefore in the proof we can replace the module $U$ with any of its non-zero submodules. So, we can assume that $U = akN$ is a cyclic $kN$-module. Then, by Lemma 4.3.2, we can assume that there exists a finitely generated dense subgroup $H$ of $N$ such that $N = ZH$, $char \, k \notin \pi (N/H)$, $akN = akH{ \otimes _{kH}}kN$ and for any nonzero element $b \in akN$ there is an element $c \in kN$ such that $0 \ne bc \in akH$. \par
 Let $L = \bigcap\nolimits_{g \in G} {{H^g}} $ then $L$ is a $G$-invariant subgroup of $H$. Since $H'=(ZH)'=  N'$ is a $G$-invariant subgroup of $H$, we can conclude that $H' \le L$ and hence the quotient group $H/L$ is abelian. \par
	Suppose that the quotient group $H/L$ is infinite. Then, by Lemma 2.1.3, there are a countable subset $X=\{g_i \in G\, |\, i \in \mathbb{N}\}$ and a descending chain $\{H_i \, |\, i \in \mathbb{N}\}$ of $H$-invariant subgroups ${H_i} \le H$ of finite index in $H$ which satisfies the conditions (i-iii) of Lemma 2.1.3 and such that the quotient group $H/(\bigcap_{ i \in \mathbb{N}} H_i)$ is free abelian. 	Put  $K = \bigcap_{ i \in \mathbb{N}} H_i$ and $S = ZK$, it is not difficult to show that  
	$C \le S$ and the module $U$ is $kS/PkS$-torsion-free. \par
Let $R \le k$ be a finitely generated domain such that $k$ is the field of fractions of $R$. Then it is not difficult to check that an $RH$-module $W = aRH$ satisfies all conditions of Proposition 4.2.1. Therefore,  there are an $H$-grand ideal $I$ of the group ring $RK$ and a central characteristic subgroup $A$ of finite index in $ K/I^\dagger $ such that the module $W$ contains a submodule $\hat W \ne 0$ such that $ \rho _A   (bRH/bRHI)= \rho _A   (\hat W/\hat W  I) =  \rho _A $   for any nonzero element $0 \ne b \in \hat WkN = \hat W{ \otimes _{RH}}kN$. Then replacing the element $a$ with a nonzero element of $\hat W$ we can assume that $ \rho _A   (bRH/bRHI)= \rho _A   ( aRH/aRHI) =  \rho _A   $ for any nonzero element $0 \ne b \in akN = aRH{ \otimes _{RH}}kN$.  \par 

Let $f = R/(R \cap I)$, then it follows from the definition of the $H$-grand ideal $I$ that $f$ is a finite field. If $char\, k > 0$ then $char\, k = char\, f$ and, as $char\, k \notin \pi (N/H)$, we see that $char\, f \notin \pi (N/H)$. By Lemma 2.1.3(iii), $\pi (H/{H_i}\,) \subseteq \pi (N/H\,)$ and hence $char\, f \notin \pi (H/{H_i}\,)$ for any 
$ i \in \mathbb{N} $. If $char\, k = 0$ and, by Lemma 2.1.3(i), there is a prime integer $q \in \pi (N/H)$ such that $H/{H_i}$ is a $q$-group for all   
$ i \in \mathbb{N} $   then, by Proposition 4.2.1(ii), we can choose the ideal $I$ such that $char\, f \ne q$. Thus, according to Lemma 2.1.3(i), we can assume that there is $ n \in \mathbb{N} $ such that $char \, f \notin \pi ({H_n}/{H_i})$ for all $i \ge n$. \par

Since $Stab_{G}[U] = G$, for any $g \in G$ we have $[U]g \cong [U]$and it easily implies that there are nonzero elements $b, d \in U$ such that $(bkG)g \cong dkG$. By the choice of the subgroup $H$, there is an element $c \in kN$ such that $0 \ne {a_g} = bc \in akH$ and hence  we can choose the element $c $ such that $ a_g \in  RH $. 
Therefore, as $akN = aRH{ \otimes _{RH}}kN$, we have ${a_g}kN = {a_g}RH{ \otimes _{RH}}kN$. Since $({a_g}kN)g = ({a_g}g)kN$, we can conclude that $({a_g}g)kN = ({a_g}g)k{H^g}{ \otimes _{R{H^g}}}kN$. Then, as $(bkN)g \cong dkN$, there is an element ${d_g} \in dkN$ such that $({a_g}g)kN \cong {d_g}kN$. Therefore, $({a_g}g)RH \cong {d_g}RH$ and, as $({a_g}g)kN = ({a_g}g)R{H^g}{ \otimes _{R{H^g}}}kN$, it is not difficult to note that $({a_g}g)RH = ({a_g}g)R(H \cap {H^g}){ \otimes _{R(H \cap {H^g})}} \cong {d_g}RH$. \par
Let ${S_g}$ be a subgroup of $H$ such that $H \cap {H^g} \le {S_g}$ and ${S_g}/(H \cap {H^g})$ is the Sylow $p$-subgroup of ${S_g}/(H \cap {H^g})$, where $char \, f = p$. Then $({a_g}g)RH = ({a_g}g)R{S_g}{ \otimes _{R{S_g}}}$ and $p$ does not divide $\left| {H/{S_g}} \right|$. Let $C_A$ be a subgroup of $A$ generated by controllers of ideals from ${\rho _A}({W / {WI}})$ and $D$ be the preimage of $C_A$ in $H$. Then, as  and $({a_g}g)RH \cong {d_g}RH$, it follows from Proposition 4.2.2 that $D \le {S_g}$. \par

	Thus, if ${L_i} = H \cap {H^{{g_i}}}$, where $g_i \in X  $ and ${S_i} = {S_{{g_i}}}$ then,  
	as descending chain  $\{H_i \, |\, i \in \mathbb{N}\}$  satisfies the condition (ii) of Lemma 2.1.3, we have $\bigcap\nolimits_{j = 1}^i {{L_j}}  \subseteq {H_i}$ and $D \le {S_i}$ for each $i \in \mathbb{N}$. Then it follows   from Lemma 2.1.4 that 	$D \leq K = \bigcap_{ i \in \mathbb{N}} H_i$. It implies that $C_A = 1$ but it is impossible   because ${\rho _A}({\hat W / {\hat WI}})$ consists of proper ideals of $fA$. The obtained contradiction shows that the   quotient group $H/L$ is finite and hence $L$ is a $G$-invariant finitely generated dense subgroup of $N$.   
\end{proof}

\section{On irreducible representations of nilpotent FATR-groups of nilpotency class 2} \label{Section 5}

  Primitive irreducible modules are a basic subject for investigations when we are dealing with induced modules and, naturally, the following question appears: what can be said on the construction of a group $G$ if it admits a faithful primitive (or semiprimitive) irreducible representation over a field $k$? It should be noted that there are many results which show that the existence of a faithful irreducible representation of a group $G$ over a field $k$ may have essential influence on the structure of the group $G$ (see for instance \cite{Szec2016, SzTu2017, Tush1990,Tush1993,Tush2012}). 
\par   
	In \cite{Harp1977} Harper solved a problem raised by Zaleskii and proved that any not abelian-by-finite finitely generated nilpotent group has an irreducible primitive representation over a not locally finite field. In \cite{Tush2002}  we proved that if a minimax nilpotent group $G$ of nilpotency class  2 has a faithful irreducible primitive representation over a finitely generated field of characteristic zero then the group $G$ is finitely generated. In \cite{Harp1980} Harper studied polycyclic groups which have faithful irreducible representations. It is well known that any polycyclic group is finitely generated soluble of finite rank and meets the maximal condition for subgroups (in particular, for normal subgroups). In \cite{Tush2000} we showed that in the class of soluble groups of finite rank with the maximal condition for normal subgroups only polycyclic groups may have faithful irreducible primitive representations over a field of characteristic zero. 
\par   
If $G$ is a group then the $FC$-center $\Delta (G) = \{ g \in G|\left| {G:{C_G}(g)} \right| < \infty \} $of $G$ is a characteristic subgroup of $G$. In \cite[Theorem A]{Harp1980} Harper proved that if a polycyclic group $G$ has a faithful primitive irreducible representation over a field $k$ then $\Delta (G)$ is rather large in the sense that $\Delta (G) \cap H > 1$ for any subgroup $1 \ne H$ of $G$ such that $|G:{N_G}(H)| < \infty $.  
\par   
By Auslander theorem, any polycyclic group $G$ is linear over the field  and it is well known that the group $G$ is finitely generated of finite rank. In \cite{Tush2022-1} we study finitely generated linear (over a field of characteristic zero) groups of finite rank which have faithful irreducible primitive representations over a field of characteristic zero. We prove that if an infinite finitely generated linear group $G$ of finite rank has a faithful irreducible primitive representation over a field of characteristic zero then $\Delta (G)$ is infinite (see \cite[Theorem 6.1]{Tush2022-1}). \par

\subsection{Semiprimitive irreducible representations of nilpotent FATR-groups of nilpotency class 2} \label{Subsection 5.1}

\begin{lemma}\label{Lemma 5.1.1} Let $G$ be a nilpotent group of nilpotency class 2 such that the torsion subgroup $T$ of $G$ is contained in the centre $Z$ of $G$ and let ${G_1}$ be a subgroup of finite index in $G$. Let $k$ be a field and let $M$ be a faithful $kG$-module such that $M = {M_1}kG$, where and ${M_1}$ is a $k{G_1}$-submodule of $M$. Then ${M_1}$ is a faithful $k{G_1}$-module.
\end{lemma}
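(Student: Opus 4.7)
The plan is to show that the centraliser $K = C_{G_1}(M_1)$ is trivial, by exploiting the hypothesis $T \le Z$ together with commutator identities in a class-$2$ group. There are four conceptual steps, of which only the second requires care.

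First, I will observe that $Z$ acts faithfully on $M_1$. Indeed, if $z \in Z \cap C_G(M_1)$ then for any $m \in M_1$ and any $g \in G$ centrality of $z$ gives $(mg)z = m(gz) = m(zg) = (mz)g = mg$, so $z$ fixes $M = M_1 kG$ pointwise and the faithfulness of $M$ forces $z = 1$. In particular $K \cap Z = 1$ and, more usefully, $C_G(M_1)\cap Z = 1$.

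Second, I will show the key intermediate fact that every $k \in K$ satisfies $[h,k] = 1$ for all $h \in G_1$. The point is that $M_1$ is a $kG_1$-submodule, so $h$ and $h^{-1}$ both act on $M_1$ and hence $M_1 h = M_1$. For $m \in M_1$ one has $(mh)k = mh$ because $mh \in M_1$ is fixed by $k$; on the other hand, the class-$2$ identity $hk = kh[h,k]$ with $[h,k] \in Z$ yields $(mh)k = m(kh[h,k]) = (mk)(h[h,k]) = mh[h,k] = (mh)[h,k]$, where I have used $mk = m$ and the centrality of $[h,k]$. Comparing the two expressions shows that $[h,k]$ fixes every element of $M_1 h = M_1$, i.e.\ $[h,k] \in C_G(M_1)$. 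Since $[h,k] \in Z$, the first step forces $[h,k] = 1$.

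Third, by Lemma 2.1.2(i)--(ii) the map $\psi \colon G \to Z$, $g \mapsto [g,k]$, is a group homomorphism. The previous step shows $G_1 \subseteq \ker\psi$, so the image $\psi(G)$ is a quotient of $G/\ker\psi$ and in particular a finite group of order at most $|G:G_1|$. Using the hypothesis $T \le Z$, the torsion subgroup of $Z$ is exactly $T$, so every finite subgroup of $Z$ lies in $T$; hence $\psi(G) \le T \le Z$.

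Finally, let $n$ be the exponent of the finite group $\psi(G)$. By Lemma 2.1.2(ii), $[g,k^n] = [g,k]^n = \psi(g)^n = 1$ for every $g \in G$, so $k^n \in Z$. Since $K$ is a subgroup, $k^n \in K$, and the first step gives $k^n = 1$. Therefore $k$ is a torsion element, and the hypothesis $T \le Z$ places $k$ in $Z$; one final application of the first step yields $k \in K \cap Z = 1$. This proves $K = 1$, i.e.\ $M_1$ is faithful.

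The main potential obstacle is the second step: one must correctly combine the class-$2$ commutator identity $hk = kh[h,k]$ with the observation $M_1 h = M_1$ to trap $[h,k]$ in $C_G(M_1)\cap Z$. Once this is in place, the hypothesis $T \le Z$ does the rest of the work, via the standard argument that a finite subgroup of $Z$ is torsion and hence central.
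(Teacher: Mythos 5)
Your proof is correct, and it takes a genuinely different route from the paper's. Both arguments pivot on the same observation --- that $C_{G_1}(M_1)\cap Z = 1$, since an element of $Z$ centralising $M_1$ must centralise $M_1 kG = M$ --- but they diverge after that point. The paper sets $X = C_{G_1}(M_1)$, notes $X\cap Z=1$ so that $T\le Z$ makes $X$ torsion-free, and then invokes Lemma~2.1.2(iii) (which requires $|G:N_G(X)|<\infty$) to extract a nontrivial $G$-normal subgroup $\bar X$ with $X^n \le \bar X \le X$; normality then forces $\bar X$ to annihilate all of $M=M_1kG$, contradicting faithfulness. You instead work element by element: for $k\in K=C_{G_1}(M_1)$ you first establish $[G_1,k]=1$ by the commutator calculation $(mh)k = (mh)[h,k]$ on $M_1h=M_1$, so the commutator homomorphism $\psi\colon g\mapsto [g,k]$ of Lemma~2.1.2(i)--(ii) has $G_1$ in its kernel; the image $\psi(G)$ is then finite, say of exponent $n$, and $k^n\in Z\cap K=1$, after which torsion-freeness of $G/T$ and $T\le Z$ finish the job. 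Your route replaces the appeal to Lemma~2.1.2(iii) (a normal-core construction) by a longer but self-contained use of Lemma~2.1.2(i)--(ii), so it is in that sense more elementary; the paper's version is shorter once Lemma~2.1.2(iii) is available. A small simplification of your Step~2 is available: since $K$ is normal in $G_1$ (conjugation by $h\in G_1$ preserves the property of fixing $M_1$ pointwise), one has $[h,k]=(k^h)^{-1}k\in K$ directly, and since also $[h,k]\in[G,G]\le Z$, the intersection $K\cap Z=1$ already gives $[h,k]=1$ without the module computation.
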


\begin{proof} Suppose that $X = {C_{{G_1}}}({M_1}) \ne 1$ then $X$ is an ${G_1}$-invariant subgroup of $G$. Since $Y = Z \cap X$ is a normal subgroup of $G$ and $M = {M_1}kG$, we see that $Y \le {C_G}(M)$. However, the module $M$ is faithful (i.e. ${C_G}(M) = 1$) and hence $X \cap Z = 1$. Then, as $T \le Z$, the subgroup $X$ is torsion-free. So, as ${G_1} \le {N_G}(X)$ and ${G_1}$ has finite index in $G$, it follows from Lemma 2.1.2(iii) that there are  and a normal subgroup $\bar X$ of $G$ such that ${X^n} \le \bar X \le X$ and ${X^n} \ne 1$ because the subgroup $X$ is torsion-free. Since $\bar X$ centralizes ${M_1}$ and $\bar X$ is a normal subgroup of $G$, it is not difficult to note that $\bar X$ centralizes $M = {M_1}kG$ but it is impossible because the $kG$-module $M$ is faithful. The obtained contradiction shows that the $kN$-module ${M_1}$ is faithful. 
\end{proof}

\begin{lemma}\label{Lemma 5.1.2} Let $G$ be a group, let $B$ and $X$ be normal subgroups of $G$ such that the subgroup $B$ is abelian, $B \cap X = Z$ is the centre of $G$, the quotient group $B/Z$ is finite and the quotient group $X/Z$ is free abelian of finite rank. Let $k$ be a field and let $M$ be a $kG$-module such that $an{n_{kB}}(M) = P$ is a maximal $G$-invariant ideal of $kB$ and the module $M$ is $kW/PkW$-torsion, where $W = B \cdot X$. Then the module $M$ is $kX/{P_1}kX$-torsion, where ${P_1} = ann_{kZ}(M) = P \cap kZ$. 
\end{lemma}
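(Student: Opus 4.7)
Here is my plan. The idea is to reduce the statement to showing that every nonzero right ideal of $S = kW/PkW$ meets the subring $T = kX/P_1 kX$ non-trivially: granted this, if $m\in M$ is nonzero and $m\bar u=0$ for some $0\neq \bar u\in S$ (using $S$-torsion), then any $\bar v$ with $0\neq \bar u\bar v\in T$ gives $m(\bar u\bar v)=0$ with $\bar u\bar v$ a nonzero element of $T$.

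First I set up the module structure. Applying Lemma 2.2.3 with $D=W$, $D_1=X$, $A=B$, $A_1=Z$, $R_1=R=k$, one has $PkW\cap kX=P_1kX$, so the inclusion $kX\hookrightarrow kW$ descends to an embedding $T\hookrightarrow S$. Combining this with the decomposition $kW=\bigoplus_{b\in T_B} b\cdot kX$ (available since $W=BX$ and $B\cap X=Z$, with $T_B$ a transversal of $Z$ in $B$), one gets that $S=\bigoplus_{b\in T_B} \bar b\cdot T$ is a free right $T$-module of rank $n=|B/Z|$. Moreover, the central extension $Z\to X\to X/Z$ with $X/Z$ free abelian splits, so $X$ is abelian, and hence $T=(kZ/P_1)[X/Z]$ is commutative.

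The main case is when $\tilde R=kB/P$ is a domain. Then $\tilde R_0=kZ/P_1$ is a domain, $T$ is a Laurent polynomial ring over $\tilde R_0$ (in particular a domain), and $S=\tilde R*(X/Z)$ is a domain as well (the group $X/Z$ is torsion-free abelian). For nonzero $\bar u\in S$, left multiplication $L_{\bar u}:S\to S$ is a $T$-linear endomorphism of the free $T$-module $S$ of rank $n$. Since $T$ is commutative, Cayley--Hamilton applies, giving an identity
\[
\bar u^n+c_{n-1}\bar u^{n-1}+\cdots+c_1\bar u+c_0=0,\qquad c_i\in T,\ \ c_0=(-1)^n\det L_{\bar u}.
\]
Rearranging yields $\bar u\bar v=-c_0$ for $\bar v=\bar u^{n-1}+c_{n-1}\bar u^{n-2}+\cdots+c_1\in S$. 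Because $S$ is a domain, $L_{\bar u}$ is injective; extending scalars to $\mathrm{Quot}(T)$ turns $L_{\bar u}$ into an injective endomorphism of a finite-dimensional vector space, which is an isomorphism, so $\det L_{\bar u}\neq 0$ in $\mathrm{Quot}(T)$ and therefore in $T$. Thus $c_0\ne 0$, and $m\cdot(-c_0)=(m\bar u)\bar v=0$ exhibits a nonzero $T$-annihilator of $m$, as required.

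The main obstacle is the case where $\tilde R$ is not a domain, for then $\bar u$ can be a zero divisor in $S$ with $\det L_{\bar u}=0$ and the Cayley--Hamilton argument fails. To handle it, I will exploit that $P$ is \emph{maximal} $G$-invariant: this forces $\tilde R$ to be $G$-simple, and the minimal primes $Q_1,\dots,Q_r$ of $kB$ over $P$ to form a single $G$-orbit, with $Q_i\cap kZ=P_1$ for all $i$. After inverting the nonzero elements of $\tilde R_0$ (which is central in $S$), $\tilde R$ becomes a finite product $\prod_i F_i$ of fields permuted transitively by $G$, and correspondingly $M$ splits into blocks $M_i=Me_i$ that are cyclically permuted by elements of $G$. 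The $G$-transitivity implies that the blocks $M_i$ are simultaneously $T$-torsion or simultaneously not; and if they were not $T$-torsion one could choose $m_i\in M_i$ with $\mathrm{ann}_T(m_i)=0$ for every $i$, combine them into a mixed element $m=\sum m_i\in M$ whose $S$-annihilator is $\prod_i\mathrm{ann}_T(m_i)=0$, contradicting the hypothesis that $M$ is $S$-torsion. Hence each $M_i$ is $T$-torsion, and for an arbitrary $m=\sum m_i\in M$ one can take nonzero $t_i\in\mathrm{ann}_T(m_i)$ (setting $t_i=1$ when $m_i=0$) and let $\bar t=\prod_i t_i\in T$; since $T$ is a domain, $\bar t\ne 0$, and $\bar t$ annihilates $m$, completing the proof.
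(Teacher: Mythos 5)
There is a genuine gap. Your argument hinges on the claim that ``the central extension $Z\to X\to X/Z$ with $X/Z$ free abelian splits, so $X$ is abelian, and hence $T=(kZ/P_1)[X/Z]$ is commutative.'' This is false: a \emph{central} extension $1\to Z\to X\to X/Z\to 1$ with $X/Z$ free abelian need not split, because one does not know in advance that $X$ is abelian, and so the Ext-projectivity of free abelian groups does not apply. The discrete Heisenberg group (centre $\ZZ$, quotient $\ZZ^2$) is the standard counterexample. In the lemma as stated $X$ is only assumed normal with $X/Z$ free abelian (so $[X,X]\le Z$, i.e.\ $X$ is nilpotent of class $\le 2$); nothing forces $[X,X]=1$, and indeed the application inside Lemma 5.1.3 genuinely produces non-abelian $X$. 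Consequently $T=kX/P_1kX$ is the crossed product $F_1*\bar X$ with $F_1=kZ/P_1$ and $\bar X=X/Z$, which is typically \emph{not} commutative (it is a ``quantum torus''). Your Cayley--Hamilton step --- the only thing that produces the element $\bar v$ with $0\ne\bar u\bar v\in T$ --- requires $T$ commutative and therefore breaks down. (A secondary slip: $S$ is free over $T$ of rank $[F:F_1]=\dim_{F_1}(kB/P)$, which may be strictly less than $|B/Z|$, so $\{\bar b\}_{b\in T_B}$ is not in general a $T$-basis; but this is cosmetic.)

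Your overall strategy --- reduce to showing $\bar uS\cap T\ne 0$ for every nonzero $\bar u\in S$, using that $S$ is free of finite rank over the subring $T$ --- is sound and can be salvaged without commutativity by a rank argument over the Ore domain $T$: since $T=F_1*\bar X$ is a right Ore domain with division ring of fractions $Q(T)$ and $S$ is a free right $T$-module of finite rank $m$, left multiplication $L_{\bar u}$ is an injective right $T$-endomorphism of $S$ (because $S$ is a domain), so $\bar uS\cong S$ has $T$-rank $m$ and $S/\bar uS$ has $T$-rank $0$, i.e.\ is $T$-torsion; applying this to $1\in S$ gives $0\ne t\in T\cap\bar uS$. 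By contrast, the paper's own proof does not isolate this ring-theoretic statement at all: it keeps the two crossed products $S=F*\bar W$ and $S_1=F_1*\bar X$ and argues by induction on $r(X/Z)$, handling the base case by the finiteness of $\dim_{F_1}F$ and the inductive step via Lemma 2.2.4 (finitely generated $R$-submodule with torsion quotient) and the observation that a free module $\oplus_{n\in\ZZ}R_1g^n$ admits no such finitely generated ``cotorsion'' submodule. So even after repair your route is structurally different from, and arguably cleaner than, the paper's; but as written the commutativity claim is a real error.
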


\begin{proof} Since $ann_{kB}(M) = P$ is a maximal ideal of $kB$ and, by Lemma 2.2.1(ii), ${P_1} = an{n_{kZ}}(M) = P \cap kZ$ is a maximal ideal of $kZ$, we see that ${F_1} = kZ/{P_1}$ is a subfield of the field $F = kZ/P$ and, as $|B/Z| < \infty $, we see that ${\dim _{{F_1}}}F < \infty $ . It is not difficult to note that $\bar W = W/B \cong X/Z = \bar X$ are free abelian groups and hence, by \cite[Corollary 37.11]{Pass1989}, the crossed products $F * \bar W = kW/PkW$and ${F_1} * \bar X = kX/{P_1}kX$ are Ore domains and ${S_1} = {F_1} * \bar X$ is a subring of $S = F * \bar W$. Since $P$ annihilates the module $M$, we can consider $M$ as an $S$-module. \par
Thus, it is sufficient to show that any $S$-torsion $S$-module $M$ is also ${S_1}$-torsion. The proof is by induction on $r(X/Z)$. Suppose that $r(X/Z) = 1$ and let $0 \ne a \in M$. Then  and, as $an{n_S}(a) \ne 0$, we can conclude that ${\dim _F}aS < \infty $. Since ${\dim _{{F_1}}}F < \infty $, we see that ${\dim _{{F_1}}}aS < \infty $ and hence elements $a{g^n}$, where , are linearly dependent over ${F_1}$. Therefore, there are elements ${\gamma _1},...,{\gamma _m} \in {F_1}$ such that $a{\gamma _1}{g^{{n_1}}} + ... + a{\gamma _m}{g^{{n_m}}} = a({\gamma _1}{g^{{n_1}}} + ... + {\gamma _m}{g^{{n_m}}}) = 0$. Thus, $an{n_{{S_1}}}(a) \ne 0$ for any $0 \ne a \in M$. \par
 	Suppose that $r(X/Z) = n > 1$ and the assertion holds if $r(X/Z) < n$. Evidently, there is a subgroup $Y$ of $X$ such that $Z \le Y < X$ and the quotient group $X/Y$ is infinite cyclic and hence $X = Y \cdot \left\langle g \right\rangle $. It easily implies that $W = BY \cdot \left\langle g \right\rangle $. Therefore, $S = R * \left\langle g \right\rangle $ and ${S_1} = {R_1} * \left\langle g \right\rangle $, where $R = kBY/P(kBY)$ and $R_1 = kY/{P_1}kY$. Since $R = F * \bar Y$ and ${R_1} = {F_1} * \bar Y$, where $\bar Y = Y/Z \cong YB/B$, it follows from \cite[Corollary 37.11]{Pass1989} that $R$ and ${R_1}$ are Ore domains. Besides, as $\left| {BY/Y} \right| = \left| {B/B \cap Y} \right| = \left| {B/Z} \right| < \infty $, it is not difficult to note that $R$ is a finitely generated ${R_1}$-module. \par
	Suppose that there is an element $0 \ne a \in M$ such that $ann_{S_1}(a) = 0$. Since the module $M$ is $S$-torsion, it follows from Lemma 2.2.4 that there is a finitely generated $R$-submodule $N$ of $aS$ such that the quotient module $aS/N$ is $R$-torsion. Then it follows from the inductive hypothesis that the quotient module $aS/N$ is ${R_1}$-torsion and, as $R$ is a finitely generated ${R_1}$-module, $N$ is a finitely generated ${R_1}$-module. Since the ring ${R_1}$ is Noetherian, so is the finitely generated ${R_1}$-submodule $N$. Therefore, ${N_1} = a{S_1} \cap N$ is a finitely generated ${R_1}$-submodule of $a{S_1}$ such that the quotient module $a{S_1}/{N_1}$  is ${R_1}$-torsion. Since $an{n_{{S_1}}}(a) = 0$, we have $ aS_1 \cong \oplus_{n \in \mathbb{Z}}{R_1}g^n $ and it easily implies that $a{S_1}$ has no finitely generated ${R_1}$-submodule ${N_1}$ which define the ${R_1}$-torsion quotient module $a{S_1}/{N_1}$. The obtained contradiction shows that $an{n_{{S_1}}}(a) \ne 0$ and hence the module $M$ is $kX/{P_1}kX$-torsion.
\end{proof}

\begin{lemma}\label{Lemma 5.1.3} Let $G$ be a group which has a series $A < D$ of normal subgroups such that the subgroup $A$ is abelian and the quotient group $D/A$ is torsion-free nilpotent. Let $k$ be a field and let $M$ be a faithful $kG$-module such that $P = an{n_{kA}}(M)$ is a $G$-invariant maximal ideal of $kA$. Then:\par
	(i) if there is an element $0 \ne a \in M$ such that $an{n_{kD}}(a) > P$ then the submodule $akG$ is $kD/PkD$-torsion;   \par
(ii) if the module $M$ is irreducible then the module $M$ is either $kD/PkD$-torsion-free or $kD/PkD$-torsion; \par
(iii) if the module $M$ is irreducible and $kD/PkD$-torsion, the centre $Z$ of $G$ is a dense subgroup of $A$ and the section $D/Z$ is central in $G$ then there is a normal subgroup $X$ of $G$ such that $Z < X \le D$, the quotient group $X/Z$ is finitely generated and the module $M$ is $kX/(P \cap kZ)kX$-torsion but for any subgroup $Y$ of $G$ such that $Z \le Y < X$ and $|X:Y| = \infty $ the module $M$ is $kY/(P \cap kZ)kY$-torsion-free. 
\end{lemma}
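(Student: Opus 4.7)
The plan is to prove parts (i), (ii), and (iii) in sequence. For (i), I observe that $F := kA/P$ is a field (since $P$ is maximal) and $D/A$ is torsion-free nilpotent, so $kD/PkD \cong F*(D/A)$ is a domain, and by localizing over finitely generated subgroups (each producing a Noetherian crossed product by \cite[Corollary 37.11]{Pass1989}) together with Goldie's theorem it is a right Ore domain. Given $0 \ne a \in M$ and $s \in kD \setminus PkD$ with $as = 0$, the identity $(ag)s^g = (as)g = 0$ combined with the $G$-invariance of $PkD$ shows that $s^g \in kD \setminus PkD$ annihilates $ag$ for every $g \in G$. For any finite collection $\{g_i\}$, the right Ore property produces a nonzero $\bar t \in \bigcap_i \bar s^{g_i}(kD/PkD)$ that kills every $k$-linear combination $\sum c_i(ag_i)$; hence $akG$ is $kD/PkD$-torsion. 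Part (ii) follows immediately: if some $0 \ne a \in M$ has $\mathrm{ann}_{kD}(a) \supsetneq PkD$, then by irreducibility $akG = M$ and (i) makes $M$ torsion; otherwise $M$ is torsion-free by definition.

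The plan for (iii) is a rank-minimization argument. First, for any $Y \triangleleft G$ with $Z \le Y \le D$ and $Y/Z$ torsion-free abelian, the pair $(Z,Y)$ satisfies the hypotheses of (i) and (ii) in place of $(A,D)$: the ideal $P_1 := P \cap kZ$ is $G$-invariant and, by Lemma 2.2.1(ii) using the density of $Z$ in $A$, maximal in $kZ$, while $Y/Z$ is torsion-free nilpotent as a torsion-free abelian group. Hence (ii) applied to $(Z,Y)$ gives that $M$ is either $kY/P_1kY$-torsion or $kY/P_1kY$-torsion-free. To produce an initial torsion candidate, pick $0 \ne a \in M$ and $s \in kD \setminus PkD$ with $as = 0$, and set $X_0 := \langle Z, \mathrm{supp}(s)\rangle \le D$. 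Since $D/Z$ is central in $G/Z$, $X_0 \triangleleft G$ and $X_0/Z$ is finitely generated abelian. With $A_0 := X_0 \cap A$, the quotient $A_0/Z$ is torsion (because $Z$ is dense in $A$) and finitely generated, hence finite; thus $X_0/A_0 \hookrightarrow D/A$ is free abelian of finite rank, and splitting $X_0/Z = A_0/Z \oplus X_1/Z$ yields $X_1 \triangleleft G$ with $A_0 \cap X_1 = Z$ and $X_1/Z$ free abelian of finite rank. Since $(P \cap kA_0)kX_0 \subseteq PkD$ and $s \notin PkD$, part (i) applied to the pair $(A_0, X_0)$ gives that $M = akG$ is $kX_0/(P \cap kA_0)kX_0$-torsion, and then Lemma 5.1.2 (with $B = A_0$, $X = X_1$, $W = X_0$) upgrades this to $M$ being $kX_1/P_1kX_1$-torsion. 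Note $X_1 > Z$, for $X_1 = Z$ would put $s \in kA$ and hence in $\mathrm{ann}_{kA}(a) = P \subseteq PkD$, contradicting $s \notin PkD$.

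Finally, let $\mathcal F$ denote the family of $Y \triangleleft G$ with $Z \le Y \le D$, $Y/Z$ free abelian of finite rank, and $M$ a $kY/P_1kY$-torsion module; $\mathcal F$ is non-empty (it contains $X_1$), and ranks of $Y/Z$ for $Y \in \mathcal F$ are positive integers (a zero-rank $Y$ equals $Z$, and torsion over the field $kZ/P_1$ would force $M = 0$), so I choose $X \in \mathcal F$ with $r := r(X/Z)$ minimal. For any subgroup $Y$ of $G$ with $Z \le Y < X$ and $|X:Y| = \infty$, the chain $[G,Y] \le [G,X] \le [G,D] \le Z \le Y$ makes $Y \triangleleft G$, and $Y/Z$ is a free abelian subgroup of $X/Z$ of rank strictly less than $r$; (ii) applied to $(Z,Y)$ then gives the torsion/torsion-free dichotomy, and the minimality of $r$ excludes the torsion alternative, leaving torsion-freeness as desired. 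The main obstacle, and the reason the argument has to be staged this way, is that the initial annihilator $s$ only witnesses torsion of the single element $a$; bridging from $a$ to all of $M$ requires combining (i) with the irreducibility of $M$, and Lemma 5.1.2 is then essential to cut the torsion property down from the crossed product over $A_0$ to the crossed product over the centre $Z$.
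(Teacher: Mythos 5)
Your proof is correct and follows essentially the same route as the paper: for (i) you use the Ore/uniformity of $kD/PkD$ together with $G$-invariance of $PkD$ to transport the annihilator of $a$ to all of $akG$; (ii) is immediate from irreducibility; and for (iii) you take a single witness $s \in \mathrm{ann}_{kD}(a)\setminus PkD$, form the finitely generated normal subgroup $\langle Z,\mathrm{supp}(s)\rangle$, split off the torsion-free part via the finite subgroup $A\cap X_0 / Z$, invoke Lemma 5.1.2 to descend from $kA_0$ to $kZ$, and then minimize the rank of $X/Z$ over the torsion alternatives of part (ii). The paper's proof of (iii) does exactly this (with $W, B, X$ in place of your $X_0, A_0, X_1$), so the only cosmetic deviation is that you invoke (i) directly where the paper invokes (ii), and you derive the Ore property via Goldie on finitely generated subgroups whereas the paper cites it outright; neither difference is substantive.
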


\begin{proof} (i). Suppose that the module $M$ is not $kD/PkD$-torsion-free then there is an element $0 \ne a \in M$ such that $an{n_{kD}}(a) > P$. Since ${kD} / {PkD}$ is a crossed product $R * \bar D$, where $R = kA/P$ is a field and $\bar D = D/A$, it follows from \cite[Corollary 37.11]{Pass1989} that ${kD} / {PkD} = R * \bar D$ is an Ore domain and hence the ring $R * \bar D$ is uniform. Evidently, as $P = ann_{kA}(M)$, we can consider $M$ as an $R * \bar G$-module and $R * \bar D \le R * \bar G$, where $\bar G = G/A$.  Each element $c$ of $M = akG$ can by presented in the form $c = ab$, where $b \in R * \bar G$. The element $b$ can be presented in the form $b = \sum\nolimits_{v \in V} {{\beta _v}} v$, where ${\beta _v} \in k$ and $V$ is a finite subset of the group $G$ . Since $ann_{kD}(a) > P$, we see that $ann_{R * \bar D}(a) \ne 0$ and, as the ring $R * \bar D$ is uniform, we can conclude that $J = \bigcap\nolimits_{v \in V} {{v^{ - 1}}(ann_{R * \bar D}(a))v}  \ne 0$. Then, as $c = ab = \sum\nolimits_{v \in V} {a{\beta _v}} v$, we have $cJ = 0$. Thus, for any element $c$ of $M$ we have $ann_{R * \bar D}(c) \ne 0$. \par
(ii). The assertion follows from (i) because $M = akG$ for any element $0 \ne a \in M$. \par
(iii). Since the module $M$ is $kD/PkD$-torsion, $PkD < an{n_{kD}}(a)$ for any nonzero element $a \in M$. Let $0 \ne b \in an{n_{kD}}(a)\backslash PkD$ and let $W = \left\langle {Z \cup Supp(b)} \right\rangle $ be a subgroup of $D$ generated by elements of $Z$ and $Supp(b)$ then $Z < W$ and the quotient group $W/Z$ is finitely generated abelian . Besides, as $0 \ne b \in an{n_{kD}}(a)\backslash PkD$, it easy to note that $Supp(b)$ is not contained in $A$ and hence the quotient group $W/Z$ is infinite. As $0 \ne b \in an{n_{kD}}(a)\backslash PkD$, we can conclude that $0 \ne b \in an{n_{kW}}(a)\backslash (P \cap kW)kW$ and hence the module $M$ is not $kW/(P \cap kW)kW$-torsion-free. Since the section $D/Z$ is central in $G$, any subgroup $V$ of $G$ such that $Z \le V \le D$ is $G$-invariant. The quotient group $W/Z$ is infinite finitely generated abelian and hence there are normal subgroups $B$ and $X$ such that $Z \le B$ $Z < X$, $\left| {B/Z} \right| < \infty $, the quotient group is finitely generated free abelian and $W/Z = B/Z \times X/Z$. Since the quotient group $A/Z$ is torsion and the quotient group $D/A$ is torsion-free, it is not difficult to show that $B = A \cap W$. As $0 \ne b \in an{n_{kW}}(a)\backslash (P \cap kW)kW$, we see that $an{n_{kW}}(a) > (P \cap kB)kW$. Then it follows from (ii) that the module $M$ is $kW/(P \cap kB)kW$-torsion and hence, by Lemma 5.1.2, the module $M$ is $kX/(P \cap kZ)kX$-torsion. Since the section $D/Z$ is central in $G$,  any subgroup $Y \le X$ such that $Z \le Y$ is $G$-invariant and hence, by (ii), the module $M$ is either $kY/(P \cap kZ)kY$-torsion or $kY/(P \cap kZ)kY$-torsion. It easily implies that we can choose the subgroup  $X$  such that  the rank of $X/Z$ is minimal with respect the property that the module $M$ is $kX/(P \cap kZ)kX$- torsion. Therefore, for any subgroup $Y$ of $G$ such that $Z \le Y < X$ and $|X:Y| = \infty $ the module $M$ is $kY/(P \cap kZ)kY$-torsion-free. 
\end{proof}

\begin{lemma}\label{Lemma 5.1.4} Let $k$ be a finitely generated field and let $G$ be a nilpotent FATR-group of nilpotency class 2 such that the torsion subgroup $T$ of  $G$ is contained in the centre $Z$ of $G$ and $char\,k \notin Sp(G)$. Let $M$ be an irreducible faithful semiprimitive $kG$-module. Suppose that the group $G$ is not finitely generated. Then: \par
       (i) the group $G$ has a normal series $Z \le A \le D \le G$ such that the quotient group ${G /D}$ is finitely generated, the quotient group ${D /A}$ is torsion-free and has no infinite polycyclic quotient groups, the subgroup $A$ is abelian and $A \le is_{G}(Z)$; \par
       (ii) there is an element $0 \ne a \in M$ such that $P = ann_{kA}(a)$ is a faithful maximal ideal of $kA$, the normaliser ${G_1} = {N_G}(P)$ of the ideal $P$ in the group $G$ is a subgroup of finite index in $G$, $M = {M_1}{ \otimes _{k{G_1}}}kG$, where ${M_1} = ak{G_1}$ is an irreducible faithful semiprimitive $k{G_1}$-module and $P = an{n_{kA}}({M_1})$; \par
        (iii)  ${D_1} = D \cap {G_1} > A$ and the module ${M_1}$ is ${{k{D_1}} \mathord{\left/
 {\vphantom {{k{D_1}} {Pk{D_1}}}} \right.
 \kern-\nulldelimiterspace} {Pk{D_1}}}$-torsion.
\end{lemma}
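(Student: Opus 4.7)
I would prove the three parts in order, with the structural choices in (i) feeding into the module-theoretic arguments in (ii) and (iii).

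\emph{For part (i):} since $G$ is nilpotent of class $2$ with $T\le Z$, the quotient $G/Z$ is abelian of finite total rank with torsion subgroup $is_G(Z)/Z$, and $G/is_G(Z)$ is torsion-free abelian of finite rank. Lemma 2.1.2(i) yields $[is_G(Z),is_G(Z)]\le T\le Z$, so Zorn's lemma produces a maximal $G$-invariant abelian subgroup $A$ with $Z\le A\le is_G(Z)$. Since $[G,G]\le Z\le A$, the quotient $G/A$ is abelian and FATR. Inside the torsion-free abelian FATR group $G/is_G(Z)$ of finite rank, the intersection of the kernels of a maximal free generating set of $\mathrm{Hom}(G/is_G(Z),\mathbb{Z})$ gives a subgroup with finitely generated quotient (of maximal possible rank) and no infinite polycyclic quotient; pulling this back past $is_G(Z)$ produces $D$ with $D/A$ torsion-free (since the torsion part of $G/A$ lies in $is_G(Z)/A$) and $G/D$ finitely generated.

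\emph{For part (ii):} take a finitely generated dense subgroup $A_0\le A$, so that $kA_0$ is Noetherian. By ACC choose $0\ne a\in M$ with $\tilde P=\mathrm{ann}_{kA_0}(a)$ maximal in $\{\mathrm{ann}_{kA_0}(m):0\ne m\in M\}$; the standard maximal-annihilator argument gives that $\tilde P$ is prime. Set $P=\mathrm{ann}_{kA}(a)$; since $kA$ is integral over $kA_0$ (Lemma 2.2.1(i)) and the maximal-annihilator property forces $\mathrm{ann}_{kA_0}(ax)=\tilde P$ for every $x\in kA$ with $ax\ne 0$, one concludes that $P$ is prime, and hence maximal by Lemma 2.2.1(ii) since $\tilde P$ is maximal in $kA_0$. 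Faithfulness of $P$: any nontrivial element of $P^\dagger\le A$ centralizes $akA$, and using that $G/A$ is abelian together with Lemma 2.1.2 shows that its normal closure centralizes $M=akG$, contradicting $C_G(M)=1$. For $|G:N_G(P)|<\infty$: Theorem 3.2.1(ii) gives $P$ finitely generated, with a generating set supported in some finitely generated $B\le A$; Lemma 2.1.2(iii) together with the FATR structure on $A$ then reduces the stabilizer to finite index. Semiprimitivity of $M$ forces $G_1=G$, so $M_1=akG=M$ trivially inherits irreducibility, faithfulness and semiprimitivity, and $P=\mathrm{ann}_{kA}(M)=\mathrm{ann}_{kA}(M_1)$.

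\emph{For part (iii):} since $G_1=G$, $D_1=D\cap G_1=D$, and $D_1>A$ follows because the construction of $D$ in (i) ensures $D/A$ is nontrivial whenever $G$ is not finitely generated (if $D=A$, the abelian FATR structure of $A$ together with the finite generation of $G/A=G/D$ would force $G$ itself to be finitely generated, contrary to hypothesis). For the $kD/PkD$-torsion claim, apply Lemma 5.1.3(ii): the irreducible module $M$ is either $kD/PkD$-torsion or $kD/PkD$-torsion-free. If it were torsion-free, Theorem 4.1.5 applied to $G$ with normal subgroup $D$ and maximal $G$-invariant ideal $P$ yields a finitely generated subgroup $H\le G$ with $M=akH\otimes_{kH}kG$; because $D/A$ has no infinite polycyclic quotient, $D\cap H$ must be dense in $D$, and combined with $G/D$ finitely generated this forces $H$ to have finite index in $G$, contradicting the semiprimitivity of $M$.

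\emph{Main obstacle.} The hardest step is part (ii): lifting maximality of the annihilator from $kA_0$ to $kA$, and bounding the index $[G:N_G(P)]$. Both require a delicate combination of the integrality argument (Lemma 2.2.1), the finite generation of $P$ (Theorem 3.2.1), and the class-$2$ nilpotent structure (Lemma 2.1.2). The torsion-versus-torsion-free dichotomy in (iii) is the second delicate point, and it is the place where the structural property of $D/A$ (no infinite polycyclic quotient) is essential, together with Theorem 4.1.5.
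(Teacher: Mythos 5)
Your overall outline follows the paper's, but each of the three parts contains a gap that you would need to repair before the argument closes.

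\textbf{Part (i).} You choose $A$ by Zorn's lemma as a maximal $G$-invariant abelian subgroup with $Z\le A\le is_G(Z)$, then pull back a subgroup of $G/is_G(Z)$ to get $D$. The problem is that any pullback from $G/is_G(Z)$ contains $is_G(Z)$, and you yourself note that the torsion of $G/A$ is exactly $is_G(Z)/A$; so if $is_G(Z)>A$ your $D/A$ contains nontrivial torsion and the required property fails. The paper avoids this by a specific choice $A=Z(is_G(Z))$ and a finiteness argument (the commutator map $\varphi_a$ has image a finite $\pi$-group, $T$ is Chernikov, so $is_G(Z)/A$ is finite); one then passes to $G^nA$ for $n=|is_G(Z):A|$ so that $G^nA\cap is_G(Z)=A$ and $G^nA/A$ is torsion-free, and constructs $D$ inside $G^nA$. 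Without establishing $|is_G(Z):A|<\infty$, you cannot arrange $D/A$ torsion-free.

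\textbf{Part (ii).} Two issues. First, you conflate ``$\tilde P=\mathrm{ann}_{kA_0}(a)$ is maximal in the set of annihilators'' (which yields primeness of $\tilde P$ as an associated prime) with ``$\tilde P$ is a maximal ideal of $kA_0$''; the latter does not follow, so the appeal to Lemma 2.2.1(ii) does not establish maximality of $P$. The paper instead shows that $M$ is a fully faithful $kA$-module, applies Theorem 3.2.3 to get a faithful prime $P=\mathrm{ann}_{kA}(a)$, observes $P\cap kZ=\mathrm{ann}_{kZ}(M)$ is a maximal ideal by Brown's theorem \cite[Theorem 4.3(i)]{Brow1982} for irreducible modules, and concludes maximality of $P$ via Lemma 2.2.1(ii). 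Your faithfulness argument (normal closure of $P^\dagger$ centralizes $M$) is also not justified as stated: $g\in C_G(a)$ does not imply $g^{x^{-1}}\in C_G(a)$, so the normal closure need not annihilate $M$. The paper uses the direct argument that $P^\dagger\cap Z\le I^\dagger=1$, $P^\dagger$ is torsion-free since $T\le Z$, and $A\le is_G(Z)$ forces $P^\dagger=1$. Second, and more seriously, you conclude $G_1=G$ from semiprimitivity. This reads the printed definition literally, but the intended meaning (clear from the use in Theorem 5.1.5, where ``semiprimitive'' yields $|G:S|<\infty$) is that $M$ is not induced from any subgroup of \emph{infinite} index; induction from a proper subgroup of finite index is compatible with semiprimitivity, and the lemma's statement ``$G_1$ is a subgroup of finite index'' deliberately allows $G_1\ne G$. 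The paper obtains $M=ak G_1\otimes_{kG_1}kG$ from Lemma 4.3.1(i,iv) (the solid submodule $akA$ and $\mathrm{Stab}_G[akA]=N_G(P)$) and then $|G:G_1|<\infty$ from semiprimitivity; irreducibility, semiprimitivity, and faithfulness of $M_1$ come from Lemma 4.3.1(ii,iii) and Lemma 5.1.1, none of which assume $G_1=G$.

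\textbf{Part (iii).} Your argument for $D_1>A$ claims that if $D=A$ then finite generation of $G/A$ would force $G$ finitely generated. But $A$ is an abelian FATR group, which need not be finitely generated (it may be minimax or worse), so this deduction is false. The torsion dichotomy via Lemma 5.1.3(ii) and Theorem 4.1.5 is the right tool, and the paper's application (if $M_1$ were $kD_1/PkD_1$-torsion-free then $M_1$ would be induced from a finitely generated subgroup of infinite index in the non-finitely-generated group $G_1$, contradicting semiprimitivity) is correct; but your version is entangled with the erroneous $G_1=G$ claim and the unjustified $D_1>A$.
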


\begin{proof} (i). Let $X = i{s_G}(Z)$, it is not difficult to show that $X$ is a normal subgroup of the subgroup $G$. It follows from Lemma 2.1.2(ii) that the commutator map ${\varphi _a}:X \to Z$ given by ${\varphi _a}:x \mapsto [a,x]$ is a homomorphism for any $a \in X$. On the other hand, if $a \in X$ then there is a positive integer $n$ such that ${a^n} \in Z$ and hence ${\varphi _a}{(x)^n} = {[a,x]^n} = [{a^n},x] = 1$ for all $x \in G$. Therefore, for any $a \in X$ there is a positive integer $n$ such that ${\varphi _a}{(X)^n} = 1$. It implies that for any $a \in X$ the mapping ${\varphi _a}$ maps $X$ into $T$, i.e. ${\varphi _a}(X) \le T$. Let $\pi $ be the set of prime divisors of orders of elements of $T$. Since  $G$  is a FATR-group, it is not difficult to show that the set $\pi $ is finite, 
$r(T)<\infty$ and hence the subgroup $T$ is Chernikov. Then, as   the subgroup $T$ is Chernikov and ${\varphi _a}{(X)^n} = 1$, we can conclude that the subgroup ${\varphi _a}(X)$ is finite $\pi $-group. Let $A$ be the centre of the subgroup $X$, as $X$ is a normal subgroup of $G$, we see that $A$ is a normal subgroup of $G$. Evidently, $Z \le A \le {C_X}(a) = Ker{\varphi _a}$ for any $a \in X$ and $A = { \cap _{a \in X}}{C_X}(a)$. Then it follows from Remak theorem that $X/A \le { \times _{a \in X}}(X/{C_X}(a))$ and, as ${\varphi _a}(X) \cong X/Ker{\varphi _a} = X/{C_X}(a)$, we can conclude that $X/A \le { \times _{a \in X}}{\varphi _a}(X)$, where ${\varphi _a}(X)$ are finite $\pi $-groups. Since $Z \le A \le X = is_{G}(Z)$, we see that the quotient group $X/A$ is torsion and, as $X/A \le { \times _{a \in X}}{\varphi _a}(X)$, it is not difficult to show  that $X/A$  is a residually finite torsion $\pi $-group. Then, as $X/A$ has finite tank $r(X/A)$ and the set $\pi $ is finite, the quotient group $X/A$ is Chernikov and as it is residually finite, we see that $X/A$ is finite. \par
Thus, group $G$ has a normal series $Z \le A \le X \le G$ such that the quotient group ${X / A}$ is finite, the quotient group ${G / X}$ is torsion-free and the quotient group ${G / A}$ is abelian. Since $\left| {X/A} \right| = n < \infty $, it is not difficult to show that ${G^n}A \cap X = A$. Therefore, the quotient group ${G^n}A/A$ is torsion-free and, as the group $G$ has finite rank, the quotient group $G/{G^n}A$ is finite. Since the quotient group ${G^n}A/A$ is torsion-free abelian of finite rank, it is not difficult to show that there exists a subgroup $D$ of $G$ such that $A \le D \le {G^n}A$ the quotient group ${G / D}$ is finitely generated, the quotient group ${D / A}$ is torsion-free and does not have infinite polycyclic  quotient groups. \par
(ii). By \cite[Theorem 4.3(i)]{Brow1982}, $I = an{n_{kZ}}(M)$ is a maximal ideal of $kZ$ and, as module $M$ is faithful, we see that the ideal $I$ is faithful. Suppose that $Y = {(an{n_{kA}}(a))^\dag } \ne 1$ for some element $0 \ne a \in M$. Since ${I^\dag } = 1$, we see that $Y \cap Z = 1$ and, as $T \le Z$, we can conclude that the subgroup $Y$ is torsion-free. Then, as $A \le i{s_G}(Z)$, for any $1 \ne g \in Y$ there is  such that $1 \ne {g^n} \in Y \cap Z$ but this contradicts $Y \cap Z = 1$. Thus, ${(an{n_{kA}}(a))^\dag } = 1$ for any element $0 \ne a \in M$ and hence the $kA$-module $M$ is fully faithful. Then it follows from Theorem 3.2.3 that there is an element $0 \ne a \in M$ such that $P = an{n_{kA}}(a)$ is a faithful prime ideal of $kA$. It is easy to note that $P \cap kZ = an{n_{kZ}}(M)$ because $P = an{n_{kA}}(a) \ge an{n_{kZ}}(M)$ and $an{n_{kZ}}(M)$ is a maximal ideal of $kZ$. Then, by Lemma 2.2.1(ii), the ideal $P$ is maximal and hence$W = akA$ is solid in $M = akG$. Then it follows from Lemma 4.3.1(i,iv) that $M = akG = ak{G_1}{ \otimes _{k{G_1}}}kG = {M_1}{ \otimes _{k{G_1}}}kG$, where ${M_1} = ak{G_1}$ and ${G_1} = {N_G}(P)$. By Lemma 4.3.1(ii, iii), ${M_1}$ is an irreducible semiprimitive $k{G_1}$-module. Besides, as $P = an{n_{kA}}(a)$, ${M_1} = ak{G_1}$ and ${G_1} = {N_G}(P)$, we can conclude that $P = ann_{kA}({M_1})$. It follows from Lemma 5.1.1 that the $k{G_1}$-module ${M_1}$ is faithful. \par
          (iii). Since the group $G$ is not finitely generated and the subgroup ${G_1}$ has finite index in $G$, we see that the subgroup ${G_1}$ is not finitely generated. If the module ${M_1}$ is $k{D_1} / Pk{D_1}$-torsion-free then, by Theorem 4.1.5, for any element $0 \ne a \in {M_1}$ there is a finitely generated dense subgroup $H \le {G_1}$ such that $ak{G_1} = akH{ \otimes _{kH}}k{G_1}$. Since the group ${G_1}$ is not finitely generated, we see that the subgroup $H$ has infinite index in ${G_1}$ and this contradicts the semiprimitivity of the module  $M$.  Thus, the module ${M_1}$ is not $k{D_1} / Pk{D_1}$-torsion-free then, by Lemma 5.1.3(ii), the module ${M_1}$ is $k{D_1} / Pk{D_1}$-torsion. 
\end{proof}

\begin{theorem}\label{Theorem 5.1.5} Let $k$ be a finitely generated field and let $G$ be a nilpotent FATR-group of nilpotency class 2 such that the torsion subgroup $T$ of $G$ is contained in the centre $Z$ of $G$ and $char\,k \notin Sp(G)$. Suppose that the group $G$ admits a faithful semiprimitive irreducible representation $\varphi $ over the field $k$. Then the group $G$ is finitely generated. 
\end{theorem}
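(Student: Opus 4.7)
The argument is by contradiction: assume that $G$ is not finitely generated. The plan is to combine Lemma 5.1.4, Lemma 5.1.3(iii), and Proposition 4.3.3 to force $G$ to have a proper finite-index subgroup from which $M$ is induced, contradicting semiprimitivity.

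First, I apply Lemma 5.1.4 to obtain the normal series $Z \le A \le D \le G$, an element $0 \ne a \in M$ with $P = An{n_{kA}}(a)$ maximal faithful, the subgroup $G_1 = N_G(P)$ of finite index, and the $kG_1$-module $M_1 = akG_1$, which is irreducible, faithful, and (by Lemma 4.3.1(iii)) semiprimitive, and which is $kD_1/PkD_1$-torsion for $D_1 = D \cap G_1 > A$. Since finite generation is preserved by passing to and from subgroups of finite index, I replace $(G, M)$ by $(G_1, M_1)$ and thus assume that $P$ is $G$-invariant. Because $G$ has nilpotency class $2$, the section $D/Z$ is central in $G$; since $A \le is_G(Z)$ by Lemma 5.1.4(i), the subgroup $Z$ is dense in $A$. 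Lemma 5.1.3(iii) then produces a normal subgroup $X$ of $G$ with $Z < X \le D$ for which $X/Z$ is finitely generated free abelian, $M$ is $kX/(P \cap kZ)kX$-torsion, and $M$ is $kY/(P \cap kZ)kY$-torsion-free for every subgroup $Y$ with $Z \le Y < X$ and $|X:Y| = \infty$.

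Next, I produce a uniform $kX$-submodule $U$ of $M$ satisfying the hypotheses of Proposition 4.3.3. Applying Theorem 3.2.3 inside the abelian centre $Z(X)$ (which has finite total rank and contains $Z$), and then selecting among the resulting prime annihilators a $G$-invariant one that is maximal (legitimated by the Noetherian nature of finitely generated sections of $kZ(X)$ over $kZ$), yields a nonzero element $a' \in M$ whose $kZ(X)$-annihilator is a maximal $G$-invariant ideal. The module $U = a' kX$ is then uniform and faithful as a $kX$-module (the faithfulness being verified as in Lemma 5.1.1) and inherits the torsion/torsion-free conditions relative to subgroups of $X$ from the previous step. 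Using Lemma 4.3.1(i,iii) together with the semiprimitivity of $M$, I reduce to the case $Stab_G[U] = G$. Proposition 4.3.3, applied with $N = X$, then yields a $G$-invariant finitely generated dense subgroup $L$ of $X$.

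The remaining step --- which I expect to be the main obstacle --- is to translate the existence of such an $L$ into a contradiction with semiprimitivity. Since $L$ is finitely generated and dense in $X$, while $X/Z$ is finitely generated free abelian, the subgroup $LZ$ has finite index in $X$. Using this, together with the $kX$-torsion condition on $M$ and the fact that $G/D$ is finitely generated by Lemma 5.1.4(i), I plan to apply Theorem 4.1.5 together with the controller machinery from Propositions 4.2.1 and 4.2.2 to the module $M$ along the chain $LZ \le D \le G$, so as to realise $M$ as an induced $kH$-module from some finitely generated subgroup $H \le G$ of finite index. This will contradict the semiprimitivity of $M$. The technical difficulty here is in carefully tracking how the $G$-invariance of the dense subgroup $L$ interacts with the $kX$-torsion structure of $M$ to assemble the required finite-index induction decomposition.
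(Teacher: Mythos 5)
Your first two steps (Lemma 5.1.4 followed by Lemma 5.1.3(iii), with the reduction to $P$ being $G$-invariant) match the paper. Your use of Lemma 4.3.1 and semiprimitivity to reduce to $\mathrm{Stab}_G[U]=G$, and then Proposition 4.3.3 to obtain a $G$-invariant finitely generated dense subgroup $L$ of $X$, also matches the paper in broad outline. One small divergence: to manufacture the uniform solid $kX$-submodule feeding into Proposition 4.3.3, the paper does not go back to Theorem 3.2.3; it exploits the fact that $kX/P_ZkX$ is a Noetherian domain (by \cite[Proposition 1.6 and Corollary 37.11]{Pass1989}) and takes an element $a$ for which $akX$ has minimal Krull dimension and is $\rho$-critical, hence uniform and solid. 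Your route through $Z(X)$ and ``selecting a $G$-invariant prime annihilator that is maximal'' is imprecise and is not how the paper secures uniformity or the $G$-invariance of the annihilator (the paper gets $G$-invariance afterwards, from Lemma 4.3.1(iv) once $G=\mathrm{Stab}_G[akX]$).

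The real gap is the step you yourself flag as the main obstacle. Your plan to deduce a contradiction from the existence of $L$ by applying Theorem 4.1.5 (together with Propositions 4.2.1 and 4.2.2) to realise $M$ as induced from a finitely generated finite-index subgroup cannot work as stated: Theorem 4.1.5 has the standing hypothesis that $M$ be $kD/PkD$-\emph{torsion-free}, whereas by Lemma 5.1.4(iii) (and the paper's reduction) the module $M$ is $kD/PkD$-\emph{torsion}. That is precisely the hard case, and it is the reason the paper does not attempt to produce a finite-index induced decomposition at the end at all. Instead the paper proceeds entirely differently. After obtaining $L$, it forms $C=C_G(L)$, observes that $G/C$ is polycyclic (by \cite[Theorem 3.2.4]{LeRo2004}) while $D/Z$ has no infinite polycyclic quotients, and concludes that $U=L\cap C$ is abelian of finite index in $L$, so that $V=ZU$ is an abelian $G$-invariant subgroup of finite index in $X$; it then replaces $X$ by $V$ and so may assume $X$ abelian. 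Finally it picks a $G$-invariant prime $I=\mathrm{Ann}_{kX}(a)$ with $P_ZkX< I$, restricts to $I_L=kL\cap I$, and invokes Roseblade's controller theorem \cite[Theorem D]{Rose1978}: the $G$-invariant prime $I_L$ of $kL$ is controlled by $k\Delta_G(L)$. But $\Delta(G)\le is_G(Z)$ by Lemma 2.1.2(iv), and $L/Z_L$ is torsion-free, so $\Delta_G(L)\le Z_L$, forcing $I_L=(I_L\cap kZ_L)kL$, which contradicts the strict inequality $(I_L\cap kZ_L)kL < I_L$ coming from $P_ZkX< I$. This contradiction is about control of a prime ideal by the $FC$-centre, not about semiprimitivity at all. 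You are missing this entire mechanism, and the tools you propose for the final step are the wrong ones for the torsion case.
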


	\begin{proof} At first, we apply Lemma 5.1.4. Let $M$ be a $kG$-module of the representation $\varphi $. Replacing $G$ and $M$ with ${G_1}$ and ${M_1}$ from Lemma 5.1.4(ii) and applying Lemmas 5.1.1 and 4.3.1(ii,iii) we can assume that the group $G$ has a normal series $Z \le A < D \le G$ such that the quotient group ${G / D}$ is finitely generated, the quotient group ${D / A}$ is torsion-free and does not have infinite polycyclic quotient groups, the subgroup $A$ is abelian and $A \le i{s_G}(Z)$. Besides, $P = an{n_{kA}}(M)$ is a $G$-invariant maximal ideal of $kA$ and the model $M$ is $kD/PkD$-torsion. Then, by Lemma 5.1.3, there is a $G$-invariant subgroup $X$ of $D$ such that $Z < X$, the quotient group $X/Z$ is finitely generated free abelian, the module $M$ is $kX/{P_Z}kX$-torsion but for any subgroup $Y$ of $G$ such that $Z \le Y < X$ and $|X:Y| = \infty $ the module $M$ is $kY/{P_Z}kY$-torsion-free, where ${P_Z} = P \cap kZ$. \par
Since $kX/{P_Z}kX = \tilde k * \tilde X$, where $\tilde k = kZ/{P_Z}$ and $\tilde X = X/Z$, it follows from \cite[Proposition 1.6]{Pass1989} and \cite[Corollary 37.11]{Pass1989} that $kX/{P_Z}kX = \tilde k * \tilde X$ is a Noetherian domain. Then, by \cite[Lemma 6.2.3]{McRo},  for any $0 \ne a \in M$ the module $akX$ has Krull dimension and hence we can choose an element $0 \ne a \in M$ such that $akX$ has minimal possible for nonzero elements of $M$ Krull dimension $\rho $. It follows from \cite[Lemma 6.2.10]{McRo} that  we can choose the element $a$ such that the module $akX$ is $\rho $-critical and hence, by \cite[Lemma 6.2.12]{McRo}, the module $akX$ is uniform. Since the submodule $akX$ is $\rho $-critical and $\rho $ is minimal possible Krull dimension for nonzero submodules of $M$, it is not difficult to note that $akX$ is solid in $M$. Then, by Lemma 4.3.1(i), $M = akG = akS{ \otimes _{kS}}kG$, where $S = Sta{b_G}[akX]$, and hence, as the module $M$ is semiprimitive, we have $\left| {G:S} \right| < \infty $. By Lemma 5.1.1, $akS$ is a faithful $kS$-module and the arguments of Lemma 4.3.1(ii,iii) shows that the module $akS$ is irreducible and semiprimitive. Thus, changing $G$ by $S$ and  $M$ by $akS$ we can assume that $G = Sta{b_G}[akX]$. Then, by Proposition 4.3.3, $X$ contains a finitely generated dense $G$-invariant subgroup $L$. \par
It is not difficult to show that  $C = {C_G}(L)$ is a normal subgroup of $G$, $Z \le C$ and it follows from \cite[Theorem 3.2.4]{LeRo2004} that the quotient group $G/C$ is polycyclic. Since the quotient group $A/Z$ is torsion, the quotient group ${D / A}$ is torsion-free and has no infinite polycyclic quotient groups, we see that quotient group ${D / Z}$  has no   infinite polycyclic quotient groups. Then, as $Z \le C$ and the quotient group $G/C$ is polycyclic, we can conclude that $\left| {D/D \cap C} \right| < \infty $ and hence, as $L \le D$, we have $\left| {L/L \cap C} \right| < \infty $ . Since $C = {C_G}(L)$, we see that $U = L \cap C$is an abelian $G$-invariant subgroup of finite index in $L$. Then it is not difficult to show that $V = ZU$ is an abelian $G$-invariant subgroup of finite index in $X$. Then it follows from Lemma 4.1.2(iii) that the module $M$ is not  $kV/{P_Z}kV$-torsion-free and hence, by Lemma 5.1.3(ii), the module $M$ is $kV/{P_Z}kV$-torsion. Since $|X:V| < \infty $, we see that for any subgroup $Y$ of $G$ such that $Z \le Y < V$ and $|X:Y| = \infty $ the module $M$ is $kY/{P_Z}kY$-torsion-free. Thus, to avoid the new notations we can assume that $X = V$, i.e. the subgroup $X$ is abelian. \par
	Since the ring $kX/{P_Z}kX = \tilde k * \tilde X$ is Noetherian, there is a nonzero element $a \in M$ such that $I = An{n_{kX}}(a)$ is a prime ideal and by the above arguments we can assume that $akX$ is a solid submodule of $M$ such that $G = Sta{b_G}[akX]$. Then it follows from Lemma 4.3.1(iv) that $I$ is a $G$-invariant ideal of $kX$. Since the module $M$ is $kX/{P_Z}kX$-torsion, we also can conclude that ${P_Z}kX < I$. 
As it was shown above, $X$ has a dense $G$-invariant subgroup $L$. Put ${Z_L} = L \cap Z$ and ${P_L} = kL \cap P$ then it is easy to note that ${P_L} = k{Z_L} \cap {P_Z}$ is a maximal ideal of $k{Z_L}$. Since ${P_Z}kX < I$, it follows from Lemma 2.2.1(iii) that $kL \cap ({P_Z}kX) < kL \cap I$. Put ${I_L} = kL \cap I$, as ${P_L}kL < kL \cap ({P_Z}kX)$ and $kL \cap ({P_Z}kX) < kL \cap I$, we can conclude that ${P_L}kL < {I_L}$ and, as ${P_L}$ is a maximal ideal of $k{Z_L}$, we have ${P_L} = {I_L} \cap kZ_L^{}$. Thus, we have $({I_L} \cap k{Z_L})kL < {I_L}$.
	On the other hand, since ${I_L} = kL \cap I$ is a $G$-invariant prime ideal of $kL$, it follows from \cite[Theorem D]{Rose1978} that ${I_L} = ({I_L} \cap k{\Delta _G}(L))kL$, where ${\Delta _G}(L) = L \cap \Delta (G)$. By Lemma 2.1.2(iv), $\Delta (G) \le i{s_G}(Z)$ and, as the quotient group $L/{Z_L}$ is torsion-free, it easily implies that ${\Delta _G}(L) = L \cap \Delta (G) \le {Z_L}$. Therefore, as ${I_L} = ({I_L} \cap k{\Delta _G}(L))kL$, it is not difficult to show that ${I_L} = ({I_L} \cap k{Z_L})kL$ but it contradicts $({I_L} \cap k{Z_L})kL < {I_L}$. The obtained contradiction shows that the group $G$ is finitely generated.

\end{proof}

\subsection{Irreducible representations of nilpotent minimax groups of nilpotency class 2 over a finitely generated field} \label{Subsection 5.2}

\begin{lemma}\label{Lemma 5.2.1} Let $k$ be a field and let $G$ be a torsion free  nilpotent group of nilpotency class 2. Suppose that the group $G$ has a faithful irreducible representation $\varphi $ over the field $k$ which is induced from an irreducible representation $\psi $ of a subgroup $N$ of $G$. Then the torsion subgroup $t(N/Ker\psi )$ is contained in the centre $Z(N/Ker\psi )$ of the quotient group $N/Ker\psi $. 
\end{lemma}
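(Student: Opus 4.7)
The plan is to prove the stronger statement that whenever $x\in N$ satisfies $x^n\in K:=\Ker\psi$ for some $n\ge 1$, the commutator $[x,y]$ is already trivial in $G$ for every $y\in N$. This will force $xK$ to commute with every $yK$ in $N/K$, giving $t(N/\Ker\psi)\subseteq Z(N/\Ker\psi)$ directly. The main ingredients are nilpotency class $2$, normality of $K$ in $N$, torsion-freeness of $G$, and faithfulness of the induced module $M\cong U\otimes_{kN}kG$.

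Set $z=[x,y]$. Since $G$ is class $2$ we have $[G,G]\le Z(G)$, so $z\in Z(G)\cap N$. By the commutator identity of Lemma 2.1.2(i), $z^n=[x,y]^n=[x^n,y]$. Because $x^n\in K$ and $K$ is normal in $N$ (as the kernel of a representation of $N$) and $y\in N$, we have $y^{-1}x^ny\in K$, and therefore
\[
[x^n,y]=(x^n)^{-1}(y^{-1}x^ny)\in K.
\]
Hence $z^n\in K$, so $\psi(z^n)$ is the identity on $U$.

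Next I would exploit the explicit description of the induced module. Fixing a right transversal $T$ for $N$ in $G$, $M=\bigoplus_{t\in T}U\otimes t$, and for any $u\in U$ and $t\in T$, the facts $z^n\in Z(G)$ and $z^n\in N$ give
\[
(u\otimes t)\cdot z^n \;=\; u\otimes tz^n \;=\; u\otimes z^n t \;=\; uz^n\otimes t \;=\; \psi(z^n)u\otimes t \;=\; u\otimes t,
\]
so $z^n$ acts trivially on $M$. By faithfulness of $\varphi$ this forces $z^n=1$ in $G$, and torsion-freeness of $G$ then yields $z=[x,y]=1$. In particular $[x,y]\in K$ for every $y\in N$, so $xK\in Z(N/K)$, and we conclude $t(N/\Ker\psi)\subseteq Z(N/\Ker\psi)$.

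No serious obstacle is anticipated; the whole argument is essentially the observation that the central commutator $[x,y]\in Z(G)\cap N$ acts on the induced module as the single operator $\psi([x,y])$, so that faithfulness of $\varphi$ pulls back to a statement about $\psi$. The one point requiring care is this pull-back computation, which uses crucially that $[x,y]$ lies in $Z(G)$ and not merely in $Z(N)$, so that it commutes with every transversal element for $N$ in $G$.
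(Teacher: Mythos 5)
Your proof is correct and follows essentially the same route as the paper: both arguments rest on the observation that an element of $Z(G)\cap\Ker\psi$ acts trivially on the whole induced module and hence is trivial by faithfulness, applied (via the class-2 identity $[x,y]^n=[x^n,y]$) to commutators with a torsion preimage. The paper packages this slightly differently --- it first establishes $Z(G)\cap\Ker\psi=1$ abstractly using irreducibility of $W$ and then finishes via isolatedness of $Z(N)$ in the torsion-free group $N$, whereas you verify the trivial action directly on the transversal decomposition and conclude from torsion-freeness of $G$ --- but the substance is identical.
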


\begin{proof} Let $W$ be a $kG$-module of the representation $\varphi $ and let $V$ be a $kN$-module of the representation $\psi $ then $W = V{ \otimes _{kN}}kG$. Let $C = Z(G) \cap Ker\psi $ then $C \le {C_G}(V)$. As $C \le Z(G)$ is a normal subgroup of $G$, ${C_W}(C)$ is a $kG$-submodule of $W$. However, as $V \le {C_W}(C) \ne 0$ and $W$ is an irreducible $kN$-module, we can conclude that ${C_W}(C) = W$. Then $C \le Ker\varphi $ and, as the representation $\varphi $ is faithful, we can conclude that $C = 1$. Thus, $Z(G) \cap Ker\psi  = 1$. \par
Suppose that there exists a not trivial element $\bar g$ of $t(N/Ker\psi )$ such that $\bar g \notin Z(N/Ker\psi )$. Let $g$ be the preimage of the element $\bar g$ in the subgroup $N$ then there exists a positive integer $n$ such that ${g^n} = h \in Ker\psi $. Let $a \in N$, as $Ker\psi $ is a normal subgroup of $N$, we have that $[h,a] \in Ker\psi $. On the other hand, as the group $N$ is of nilpotency class 2, we see that $[h,a] \in Z(N)$. Therefore, $[h,a] \in Z(N) \cap Ker\psi $ and, as $Z(N) \cap Ker\psi  = 1$, we can conclude that $[h,a] = 1$ for any element $a \in N$. It immediately implies that ${g^n} = h \in Z(N)$. However, as $N$ is a torsion-free nilpotent group, its centre $Z(N)$ is an isolated subgroup of $N$ and hence $g \in Z(N)$. Thus, we can conclude that $\bar g \in Z(N/Ker\psi )$. 
\end{proof}

\begin{theorem}\label{Theorem 5.2.2}  Let $G$ be a torsion free minimax nilpotent group of nilpotency class 2 and let $k$ be a finitely generated field such that $shark \notin Sp(G)$. Suppose that the group  $G$ admits a faithful irreducible representation $\varphi $ over $k$. Then there exist a subgroup $N$ and an irreducible representation $\psi $ of the subgroup $N$ such that the representation $\varphi $ is induced from $\psi $ and the quotient group $N/Ker\psi $ is finitely generated. 
\end{theorem}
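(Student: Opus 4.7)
The plan is to combine Theorem~\ref{Theorem 5.1.5} with a descent argument mediated by Lemma~\ref{Lemma 5.2.1}. Suppose first that the representation $\varphi$ is itself semiprimitive. Since $G$ is torsion free, its torsion subgroup is trivial and hence trivially contained in the centre $Z(G)$, so all hypotheses of Theorem~\ref{Theorem 5.1.5} hold for $(G,\varphi)$. That theorem then yields that $G$ is finitely generated, and taking $N=G$ and $\psi=\varphi$ (noting $\Ker\varphi=1$) makes $N/\Ker\psi=G$ finitely generated, completing the proof in this case.

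Suppose instead that $\varphi$ is not semiprimitive. By definition there exist a proper subgroup $H<G$ of finite index and a $kH$-submodule $U$ of the module $M$ of $\varphi$ with $M=U\otimes_{kH}kG$. The irreducibility of $M$ forces $U$ to be an irreducible $kH$-module; let $\psi$ denote the corresponding representation of $H$ and $K=\Ker\psi$. Since $G$ is torsion free nilpotent of class~$2$ and $\varphi$ is a faithful irreducible representation of $G$ induced from $\psi$, Lemma~\ref{Lemma 5.2.1} gives $t(H/K)\le Z(H/K)$. Combined with the observations that $H/K$ is minimax nilpotent of class at most~$2$ (being a quotient of a subgroup of $G$) and that $char\,k\notin Sp(H/K)\subseteq Sp(G)$, the faithful irreducible representation $\bar\psi$ of $H/K$ afforded by $U$ satisfies all the hypotheses of Theorem~\ref{Theorem 5.1.5}. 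If $\bar\psi$ is semiprimitive on $H/K$, then Theorem~\ref{Theorem 5.1.5} yields $H/K$ finitely generated, and we take $(N,\psi)=(H,\psi)$. In the subcase where $H/K$ happens to be abelian, Corollary~\ref{Corollary 3.1.5} delivers the required finitely generated quotient equally well.

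Otherwise the dichotomy repeats: $\bar\psi$ is induced from an irreducible representation of some proper finite-index subgroup of $H/K$, which pulls back to a proper finite-index subgroup $H'<H$ with $K\le H'$ and an irreducible $kH'$-module from which $\psi$ is induced. Lemma~\ref{Lemma 5.2.1}, reapplied with the original torsion-free ambient group $G$ and the composite induction $\varphi={\psi'}^G$, maintains the condition $t(H'/\Ker\psi')\le Z(H'/\Ker\psi')$. Iterating produces, if it does not terminate, a strictly descending chain $G>H_1>H_2>\cdots$ of finite-index subgroups together with irreducible representations $\psi_i$ of $H_i$ that all induce to $\varphi$ and to which Theorem~\ref{Theorem 5.1.5} applies after passing to $H_i/\Ker\psi_i$.

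The main obstacle is to prove termination of this descent. The plan is to use the minimax structure of $G$ to exhibit a strictly-decreasing invariant of the pair $(H_i,\Ker\psi_i)$---for instance, a lexicographic combination of the minimax length and the Hirsch length of $H_i/\Ker\psi_i$, or a suitably-defined ``rank of uninduced content'' that captures how much of $M$ has already been de-induced---forcing the iteration to stop within finitely many steps. An alternative route is a Zorn-style maximality argument on the poset of pairs $(N,\psi)$ inducing to $\varphi$, after verifying that suitable chains admit limits inside $G$. Once termination is secured, Theorem~\ref{Theorem 5.1.5} (or Corollary~\ref{Corollary 3.1.5} in an abelian subcase) applied at the terminal step produces the desired pair $(N,\psi)$ with $N/\Ker\psi$ finitely generated.
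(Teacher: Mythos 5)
Your overall strategy matches the paper's: reduce to a pair $(N,\psi)$ with the $kN$-module semiprimitive, use Lemma~\ref{Lemma 5.2.1} to pass the ``torsion contained in centre'' hypothesis down to $N/\Ker\psi$, and then apply Theorem~\ref{Theorem 5.1.5} to conclude $N/\Ker\psi$ is finitely generated. Case~1 (where $\varphi$ is itself semiprimitive) is fine, and the bookkeeping in Case~2 showing that Lemma~\ref{Lemma 5.2.1} and the hypotheses of Theorem~\ref{Theorem 5.1.5} continue to hold at each stage is also correct. What is genuinely missing is the existence of a ``terminal'' pair, and you yourself flag this as ``the main obstacle'' rather than resolving it, so the argument as written has a real gap.

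The gap is not merely unfinished; the two escape routes you sketch are unlikely to close it. Every de-induction step in your chain $G>H_1>H_2>\cdots$ is to a subgroup of \emph{finite} index, because semiprimitivity is defined via finite-index subgroups. But minimax length, Hirsch length, rank, and essentially all the standard ``size'' invariants of a minimax group are unchanged on passage to a finite-index subgroup, so a lexicographic combination of them cannot strictly decrease along your chain. A na\"{\i}ve Zorn argument fares no better, since one would need to know that a descending chain of pairs $(H_i,V_i)$ has a nonzero ``limit'' module over $\bigcap_i H_i$ from which $M$ is still induced, and this is not automatic. The paper instead invokes the minimax descending chain condition of \cite[Proposition 5.1.5]{LeRo2004} (no infinite descending chain of subgroups with all indices infinite) together with transitivity of induction \cite[Chap. 2, Lemma 2.1]{Karp} to assert directly that $W$ is induced from a semiprimitive irreducible $kN$-submodule for some $N\le G$; that is the one step your proposal would need to supply, and it requires a different mechanism than the invariants you name. (Your aside invoking Corollary~\ref{Corollary 3.1.5} in an ``abelian subcase'' is unnecessary: Theorem~\ref{Theorem 5.1.5} already covers that situation.)
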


\begin{proof} Let $W$ be a $kG$-module of the representation $\varphi $. It follows from results of \cite[Proposition 5.1.5]{LeRo2004} that the minimax group $G$ has no infinite descending chain of subgroups   such that $\left| {{G_i}:{G_{i + 1}}} \right| = \infty $  for all . Then  it easily follows from \cite[Chap. 2, Lemma 2.1]{Karp} that there are a subgroup $N$ of $G$ and a semiprimitive irreducible $kN$-submodule $V$ of $W$ such that $W = V{ \otimes _{kN}}G$. Let $\psi $ be the representation of $N$ over $k$ such that $V$ is a  $kN$-module of the representation $\psi $. Then it follows from Lemma 5.2.1 that the torsion subgroup $t(N/Ker\psi )$ is contained in the centre $Z(N/Ker\psi )$ of the quotient group $N/Ker\psi $. Therefore, by Theorem 5.1.5, the quotient group $N/Ker\psi $ is finitely generated.
\end{proof}

\begin{corollary}\label{Corollary 5.2.3} Let $G$ be a torsion-free minimax nilpotent group of nilpotency class 2 and let $k$ be a finitely generated field such that $shark = 0$. Suppose that the group  $G$ admits a faithful irreducible representation $\varphi $ over $k$. Then there exist a subgroup $N$ and an irreducible primitive representation $\psi $ of the subgroup $N$ over $k$ such that the representation $\varphi $ is induced from $\psi $ and the quotient group $N/Ker\psi $ is finitely generated. 
\end{corollary}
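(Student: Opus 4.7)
The plan is to derive Corollary 5.2.3 from Theorem 5.2.2 by strengthening the irreducible representation produced there into a primitive one. Since $char\,k = 0$ trivially implies $char\,k \notin Sp(G)$, Theorem 5.2.2 applies directly and yields a subgroup $N \leq G$ together with an irreducible representation $\psi$ of $N$ over $k$ such that $\varphi$ is induced from $\psi$ and $\bar N := N/Ker\,\psi$ is finitely generated. Let $V$ be the module of $\psi$; then $\psi$ factors through a faithful irreducible representation $\bar\psi$ of $\bar N$ on $V$.

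The main task is to refine $\bar\psi$ into a representation induced from a primitive one. Since $\bar N$ is a section of the nilpotent-class-$2$ group $G$ and is finitely generated, it is polycyclic, so by Hall's theorem (see \cite[Proposition 1.6]{Pass1989}) the group ring $k\bar N$ is right Noetherian. I would invoke the standard fact that every irreducible module over the group algebra of a polycyclic group is induced from a primitive irreducible module of some subgroup: iteratively, whenever the current representation $\bar\psi_i$ on $L_i$ is not primitive, it is induced from an irreducible representation on a proper subgroup $L_{i+1} < L_i$ (with irreducibility preserved via Mackey's theorem), producing a descending chain $\bar N = L_0 > L_1 > \cdots$. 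The chain must terminate, by first observing that the Hirsch length $h(L_i)$ is non-increasing and eventually stabilises, and then that the subsequent finite-index descent is controlled by the Noetherian $kL_i$-module structure on the restriction of $V$.

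In the final step, I would lift the primitive representation back to $N$. Let $\bar M \leq \bar N$ and $\bar\chi$ be the primitive irreducible representation obtained, so $\bar\psi = (\bar\chi)^{\bar N}$. Let $M$ be the preimage of $\bar M$ under $N \to \bar N$, and let $\chi$ be the inflation of $\bar\chi$ to $M$. Then $Ker\,\chi \supseteq Ker\,\psi$, hence $M/Ker\,\chi$ is a quotient of $\bar M$, so finitely generated. By transitivity of induced modules, $\varphi = \psi^G = (\chi^N)^G = \chi^G$. Primitivity of $\chi$ is inherited from that of $\bar\chi$: if $\chi$ were induced from a proper subgroup $M' < M$, one distinguishes the cases $M' \cdot Ker\,\psi < M$ (which, after quotienting by $Ker\,\psi$, gives a proper induction of $\bar\chi$ from $M'\cdot Ker\,\psi/Ker\,\psi \leq \bar M$, contradicting its primitivity) and $M' \cdot Ker\,\psi = M$ (which is ruled out by a dimension count on a transversal of $M'$ in $M$ chosen from $Ker\,\psi$, since $Ker\,\psi$ acts trivially on $V$).

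The main technical obstacle is the termination argument in the polycyclic decomposition step. While the Hirsch-length descent handles the first phase cleanly, the finite-index phase requires a careful Noetherian argument on the restricted module. A secondary subtlety is the case analysis in the primitivity inheritance; once these are handled, the rest of the proof is a routine combination of Theorem 5.2.2 and the transitivity of induction.
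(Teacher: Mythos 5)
Your overall structure is sound: reduce via Theorem 5.2.2 to the case of a faithful irreducible representation of a finitely generated nilpotent group $\bar N$, produce a primitive $\bar\chi$ of a subgroup $\bar M\le\bar N$ inducing $\bar\psi$, then inflate back along $N\to\bar N$. The inflation and transitivity bookkeeping, and the primitivity-inheritance argument (including the transversal-inside-$Ker\,\psi$ case when $M'\cdot Ker\,\psi=M$), are all correct in outline. However, the proposal is not a complete proof, and the gap is exactly the step you try to wave away.

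The claim that ``every irreducible module over the group algebra of a polycyclic group is induced from a primitive irreducible module of some subgroup'' is not a standard fact one may simply invoke. This is precisely the substantive theorem that the paper outsources: its entire proof of the corollary reads ``apply Theorem 5.2.2, then apply [Tush2022, Theorem 5.6],'' where the latter is a result about irreducible representations of finitely generated nilpotent groups in characteristic zero. You have, in effect, restated that theorem and then sketched a would-be proof of it. Your sketch handles the infinite-index phase (Hirsch-length drop) but does not actually give a termination argument for the finite-index phase; you yourself flag this as ``the main technical obstacle'' and leave it as a promise of ``a careful Noetherian argument on the restricted module.'' That argument is genuinely delicate (and is where the hypotheses $char\,k=0$ and nilpotency enter — for arbitrary polycyclic groups or positive characteristic the descent can fail, which is why the corollary is not stated under the weaker hypothesis $char\,k\notin Sp(G)$ of Theorem 5.2.2, and why Question 1.1 is posed as open). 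Also note that $k\bar N$ being Noetherian does not make the restriction of an infinite-dimensional irreducible module to a subgroup a Noetherian module, so the phrase ``Noetherian $kL_i$-module structure on the restriction of $V$'' does not by itself define a terminating invariant. In short: either cite [Tush2022, Theorem 5.6] (as the paper does), or supply the full finite-index termination argument; as written, the proposal has a hole exactly where the real work is.
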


\begin{proof} The assertion follows from the above theorem and \cite[Theorem 5.6]{Tush2022}. 
\end{proof}


\begin{thebibliography}{99}

\bibitem{Bour} N. Bourbaki, Elements of Mathematics: Commutative Algebra, Chapters 1-7, Springer, 1998.
 \bibitem{Broo1985} C.J.B. Brookes, Ideals in group rings of soluble groups of finite rank, Math. Proc. Cambridge. Phil. Soc. 97 (1985) 27-49.
\bibitem{BBrow1985} C.J.B. Brookes and K. A. Brown, Primitive group rings and Noetherian rings of quotients, Trans. Amer. Math. Soc. 288 (1985) 605-623.
\bibitem{Broo1988} C.J.B. Brookes Modules over polycyclic groups, Proc. London Math. Soc. 57 (1988) 88-108.  
\bibitem{Brow1982} K.A. Brown, The Nullstellensatz for certain group rings, J. London Math. Soc. (2) 26 (1982) 425-434 
\bibitem{Fuch1973} L. Fuchs, Infinite abelian groups, vol. I and II, Academic Press, New York and London, 1973. 
\bibitem{Harp1977} D.L. Harper, Primitive irreducible representations of nilpotent groups, Math. Proc. Camb. Phil. Soc. 82 (1977) 241-247.
\bibitem{Harp1980} D.L. Harper, Primitivity in representations of polycyclic groups, Math. Proc. Camb. Phil. Soc. 88 (1980) 15-31.  
\bibitem{Karp} G. Karpilovsky, Induced Modules over Group Algebras. North-Holland Mathematics Studies, vol.161, Elsevier, Amsterdam, 1990.
\bibitem{Lang1965} S. Lang, Algebra, Addison-Wesley, Reading, 1965.
\bibitem{LeRo2004} J.C. Lennox and D.J.S. Robinson, The Theory of Infinite Soluble Groups, Clarendon Press, 2004. 
\bibitem{May1972} W. May, Multiplicative groups of fields, Proc. London Math. Soc. (2) 24 (1972) 295-306.
\bibitem{McRo} J.C. McConnell and J.C. Robson, Noncommutative Noetherian rings, Wiley, Chichester 1987.
\bibitem{Pass1989} D.S. Passman, Infinite Crossed Products, Academic Press, 1989.
\bibitem{Rose1973} J.E. Roseblade, Group rings of polycyclic groups, J. Pure Appl. Algebra (4) 3 (1973) 307-328.
 \bibitem{Rose1978} J.E. Roseblade, Prime ideals in group rings of group rings of polycyclic groups, Proc. London Math. Soc. (3) 36 (1978) 385-447.
\bibitem{Sega1977} D. Segal, On the residual simplicity of certain modules, Proc. London Math. Soc. (3) 34 (1977) 327-353. 
\bibitem{Sega2001} D. Segal, On the group rings of abelian minimax groups, J. Algebra (1) 237 (2001) 64-94.
\bibitem{Szec2016} Szechtman, F.: Groups having a faithful irreducible representation, J. Algebra 454  (2016) 292-307.  
\bibitem{SzTu2017} Szechtman, F., Tushev, A.: Infinite groups admitting a faithful irreducible representation. J. Algebra Appl. 17 (2017)  1850005-1- 1850005-7.
 \bibitem{Tush1990} Tushev, A.V.: Irreducible representations of locally polycyclic groups over an absolute field, Ukr. Math. J.  (10) 42 (1990)  1233-1238. 
 \bibitem{Tush1993} Tushev, A.V.: On exact irreducible representations of locally normal groups, Ukr. Mat. J. 
  (12) 45 (1993) 1900-1906. 
  \bibitem{Tush2000} A.V. Tushev, On the primitive representations of soluble groups of finite rank, Sb. Math. (11) 191 (2000) 1707-1748. 
 \bibitem{Tush2002} A.V. Tushev, On Primitive Representations of Minimax Nilpotent Groups, Math. Notes (1) 72 (2002) 117-128.
\bibitem{Tush2006} A.V. Tushev, On the controllers of prime ideals of group algebras of abelian torsion-free groups of finite rank over a field of positive characteristic, Sb. Math. (9) 197 (2006) 1365-1404. 
\bibitem{Tush2012} Tushev, A.V.: On the irreducible representations of soluble groups of finite rank, Asian-Eur. J. Math. (5) 15 (2012) 1250061-1-1250061-12. 
 \bibitem{Tush2022} A.V. Tushev, Primitive irreducible representations of finitely generated nilpotent groups, Eur. J. Math. (2) 8 (2022) 704 - 719. 
\bibitem{Tush2022-1} A.V. Tushev, On the primitive irreducible representations of finitely generated linear groups of finite rank, Asian-Eur. J. Math. (4) 15 (2022) 2250068. 
\bibitem{Wehr09} B. A. F. Wehrfritz,  Group and Ring Theoretic Properties of Polycyclic Groups, Algebra and Applications, vol. 10, Springer, 2009.

\end{thebibliography}
\end{document}